\def\C{{\mathbb C}}
\def\R{{\mathbb R}}
\def\N{{\mathbb N}}
\def\le{\leqslant}
\def\ge{\geqslant}
\newcommand{\re}{\mathrm{Re}}
\newcommand{\im}{\mathrm{Im}}
\newcommand{\eps}{\varepsilon}
\DeclareMathOperator{\bd}{\mbox{\boldmath $\delta$}}
\theoremstyle{plain}
\newtheorem{theorem}{Theorem}[section]
\newtheorem{lemma}[theorem]{Lemma}
\newtheorem{proposition}[theorem]{Proposition}
\theoremstyle{definition}
\newtheorem{remark}[theorem]{Remark}
\newtheorem*{remark*}{Remark}
\numberwithin{equation}{section}
\DeclareMathOperator{\sech}{sech}
\DeclareMathOperator{\sgn}{sgn}
\begin{document}

\title[Derivative NLS type equations in two spatial dimensions]{On a class of derivative Nonlinear Schr\"odinger-type equations in two spatial dimensions}

\author[J. Arbunich]{Jack Arbunich}

\author[C.Klein]{Christian Klein}

\author[C. Sparber]{Christof Sparber}

\address[J.~Arbunich]
{Department of Mathematics, Statistics, and Computer Science, M/C 249, University of Illinois at Chicago, 851 S. Morgan Street, Chicago, IL 60607, USA}
\email{jarbun2@uic.edu}

\address[C.~Klein]{Institut de Math\'ematiques de Bourgogne, 9 avenue 
Alain Savary, 21000 Dijon, France}
\email{christian.klein@u-bourgogne.fr}

\address[C.~Sparber]
{Department of Mathematics, Statistics, and Computer Science, M/C 249, University of Illinois at Chicago, 851 S. Morgan Street, Chicago, IL 60607, USA}
\email{sparber@uic.edu}

\begin{abstract}
We present analytical results and numerical simulations for a class of nonlinear dispersive equations in two spatial dimensions. These 
equations are of (derivative) nonlinear Schr\"odinger type and have recently been obtained in \cite{DLS} in the context of nonlinear optics. In contrast to the usual nonlinear Schr\"odinger equation, this new model incorporates the additional effects of self-steepening and partial off-axis variations of the group velocity of the laser pulse. 
We prove global-in-time existence of the corresponding solution for various choices of parameters, extending earlier results of \cite{AAS}. 
In addition, we present a series of careful numerical simulations concerning the (in-)stability of stationary states and the possibility of finite-time blow-up.
\end{abstract}

\date{\today}

\subjclass[2000]{65M70, 65L05, 35Q55.}
\keywords{nonlinear Schr\"odinger equation, derivative nonlinearity, orbital stability, finite-time blow-up, self-steepening, spectral resolution, Runge-Kutta algorithm}

\thanks{This publication is based on work supported by the NSF through grant no. DMS-1348092. Additional 
support by the NSF research network KI-Net is also acknowledged. C.K. 
also acknowledges partial support by the ANR-FWF project ANuI and by the Marie-Curie 
RISE network IPaDEGAN}

\maketitle

 \tableofcontents


\section{Introduction}\label{sec:intro}

This work is devoted to the analysis and numerical simulations for the following class of nonlinear dispersive equations in two spatial dimensions:
\begin{equation}\label{PDNLS}
i P_{\eps}\partial_{t}u + \Delta u + (1+i \mbox{\boldmath $\delta$} \cdot \nabla )\big(|u|^{2\sigma}u\big) =0, \quad u_{\mid t=0}=u_0(x),
\end{equation}
where $x=(x_1, x_2)\in \R^2$, $\mbox{\boldmath $\delta$}=(\delta_1,\delta_2)^\top \in \R^{2}$ is a given vector with $|\mbox{\boldmath $\delta$}| \le 1$, and $\sigma> 0$ is a 
parameter describing the strength of the nonlinearity. 
In addition, for $0< \eps \le 1$, we denote by $P_\eps$ the following linear differential operator,
\begin{equation}\label{P}
P_{\eps} = 1 - \eps^2 \sum_{j =1}^k \partial_{x_j}^2, \quad k\le 2.
\end{equation}
Indeed, we shall mainly be concerned with \eqref{PDNLS} rewritten in its {\it evolutionary form}:
\begin{equation}\label{PDNLSevol}
i \partial_{t}u + P^{-1}_{\eps}\Delta u + P_\eps^{-1}(1+i \mbox{\boldmath $\delta$} \cdot \nabla )\big(|u|^{2\sigma}u\big) =0, \quad u_{\mid t=0}=u_0(x_1,x_2). 
\end{equation}
Here and in the following, $P_\eps^{s}$, for any $s\in \R$, is the non-local operator defined through multiplication in Fourier space using the symbol
$$\widehat{P}^s_\eps(\xi)= \Big(1+ \eps^2\sum_{j =1}^k \xi_j^2\Big)^s, \quad k\le 2,$$
where $\xi = (\xi_1,\xi_2)\in \R^2$ is the Fourier variable dual to $x=(x_1,x_2)$. For $s=-1$ this obviously yields a bounded operator $P^{-1}_\eps: L^2(\R_x^2)\to  L^2(\R_x^2)$. 
In addition, $P_\eps$ is seen to be uniformly elliptic provided $k=2$.  Moreover when $\eps =1$ and $k=2$, note we can define the $L^{2}(\R_{x}^{2})$-based Sobolev spaces for $s \in \R$ via the norm 
$$
 \|f \|_{H_{x}^{s}} = \big \|P_1^{s/2}f \big \|_{L^{2}_{x}}:=\left(\int_{\R^{2}}|\widehat{P}_1^{s/2}\widehat{f}(\xi)|^2\;d\xi \right)^{\frac12}.
$$

The inclusion of $P_\eps$ implies that \eqref{PDNLS}, or equivalently \eqref{PDNLSevol}, shares 
a formal similarity with the well-known {\it Benjamin-Bona-Mahoney equation} for uni-directional shallow water waves \cite{BBM, BMR}. 
However, the physical context for \eqref{PDNLS} is  rather different. 
Equations of the form \eqref{PDNLS} have recently been derived in \cite{DLS} as an effective description for the propagation of high intensity laser beams. 
This was part of an effort to remedy some of the shortcomings of the classical 
(focusing) {\it nonlinear Schr\"odinger equation} (NLS), which is obtained from \eqref{PDNLS} when $\eps=\delta_{1}=\delta_2=0$, i.e.
\begin{equation}\label{NLS}
i \partial_{t}u + \Delta u + |u|^{2\sigma}u =0, \quad u_{\mid t=0}=u_0(x_1,x_2). \\
\end{equation}
The NLS is a canonical model for slowly modulated, self-focusing wave propagation in a weakly nonlinear 
dispersive medium. The choice of $\sigma =1$ thereby corresponds to the physically most relevant case of a {\it Kerr nonlinearity}, cf. \cite{Fi, Sul}. 
Equation \eqref{NLS} is known to conserve, among other quantities, the {\it total mass}
\[
M(t)\equiv\| u(t,\cdot)\|^2_{L_{x}^2} = \| u_0 \|^2_{L_{x}^2}.
\]
A scaling consideration then indicates that \eqref{NLS} is $L^2$-critical for $\sigma =1$ and $L^2$-super-critical for $\sigma> 1$. 
It is well known that in these regimes, solutions to \eqref{NLS} may not exist for all $t\in \R$, due to the possibility of {\it finite-time blow-up}. The latter 
means that there exists a time $T<\infty$, depending on the initial data $u_0$, such that 
\[
\lim_{t\to T_-} \| \nabla u(t,\cdot) \|_{L_{x}^2} = + \infty.
\]
In the physics literature this is referred to as {\it optical collapse}, see \cite{Fi}. 

In the $L^2$-critical case,  there is a sharp dichotomy characterizing the possibility 
of this blow-up: Indeed, one can prove that the solution $u$ to \eqref{NLS} with $\sigma =1$ exists for all $t\in \R$, provided its total mass is below that of the {\it nonlinear ground state}, i.e., 
the least energy (nonzero) solution of the form $$u(t,x) = e^{it} Q(x).$$ Solutions $u$ whose $L^2$-norm exceeds the norm of $Q$, however, will in general exhibit a self-similar blow-up 
with a profile given by $Q$ (up to symmetries), see \cite{MR1, MR2}. In turn, this also implies that stationary states of the form $e^{it} Q(x)$ are {\it strongly unstable}. 
For more details on all this we refer the reader to \cite{Cz, Fi, Sul} and references therein.

In comparison to \eqref{NLS}, the new model \eqref{PDNLSevol} includes two additional physical effects. 
Firstly, there is an additional nonlinearity of derivative type which describes the possibility of {\it self-steepening} of the laser pulse in the direction $\mbox{\boldmath $\delta$}\in \R^2$. 
Secondly, the operator $P_\eps$ describes {\it off-axis variations} of the group velocity of the beam. 
The case $k=2$ is thereby referred to as {\it full off-axis dependence}, whereas for $k=1$ the model incorporates only a {\it partial off-axis variation}. 
Both of these effects become more pronounced for high beam intensities (see \cite{DLS}) 
and both are expected to have a significant influence on the possibility of finite-time blow-up. 
In this context, it is important to note that \eqref{PDNLSevol} does not admit a simple scaling invariance analogous to \eqref{NLS}. Hence, 
there is no clear indication of sub- or super-critical regimes for equation \eqref{PDNLSevol}.  At least formally, though, equation \eqref{PDNLSevol} admits the following conservation law,
\begin{equation} \label{CLM}
M_\eps(t)\equiv \| P^{1/2}_{\eps}u(t, \cdot) \|^2_{L_{x}^2} =\| P^{1/2}_{\eps}u_{0}\|^2_{L_{x}^2},
\end{equation}
generalizing the usual mass conservation.
In the case of full-off axis dependence, \eqref{CLM} yields an a-priori bound on the $H^1$-norm of $u$, ruling out 
the possibility of finite-time blow-up. However, the situation is more complicated in the case with only a  
partial off-axis variation. 

The latter was studied analytically in the recent work \cite{AAS}, but only for the much simpler case {\it without} self-steepening, i.e., only for $\delta_1=\delta_2=0$.  
It was rigorously shown that in this case, even a partial off-axis variation (mediated by $P_\eps$ with $k=1$) can arrest the blow-up for all $\sigma <2$. 
In particular, this allows for nonlinearities larger than the $L^2$-critical case, cf. Section \ref{sec:anpartial} for more details. 
One motivation for the present work is to give numerical evidence for the fact that these results are indeed sharp, and that one can expect 
finite-time blow-up as soon as $\sigma \ge 2$. 

The current work aims to extend the analysis of \cite{AAS} to situations with additional self-steepening, i.e., $\mbox{\boldmath $\delta$}\not =0$, 
and to provide further insight into the qualitative interplay between this effect and the one stemming from $P_\eps$. 
From a mathematical point of view, the addition of a derivative nonlinearity makes the question of global well-posedness versus finite-time blow-up 
much more involved. Derivative NLS and their corresponding ground states are usually studied in one spatial dimension only, see e.g. 
\cite{AmSi, CO, HaOz, GNW, LSS1, LSS2, TF, Wu} and references therein. For $\sigma =1$, the classical one-dimensional derivative NLS 
is known to be completely integrable. Furthermore, there has only very recently been a breakthrough in the proof of global-in-time existence for this case, see \cite{JLPS1, JLPS2}. In contrast to that, 
\cite{LSS2} gives strong numerical indications for a self-similar finite-time blow-up in derivative NLS with $\sigma >1$. 
The blow-up thereby seems to be a result of the self-steepening effect in the density $\rho=|u|^2$, which generically 
undergoes a time evolution similar to a dispersive shock wave formation in Burgers' equation. To our knowledge, however, no 
rigorous proof of this phenomenon is currently available. 

In two and higher dimensions, even the local-in-time existence of solutions to derivative NLS type equations seems to be largely unknown, let alone any further qualitative properties of their solutions. 
In view of this, the present paper aims to shine some light on the specific variant of two-dimensional derivative NLS given by \eqref{PDNLSevol}. Except for its physical significance, this 
class of models also has the advantage that the inclusion of (partial) off-axis variations via $P_\eps$ are expected to have a strong 
regularizing effect on the solution, and thus allow for several stable situations without blow-up. 

The organization of our paper is then as follows:
\begin{itemize}
\item In Section \ref{sec:ground}, we shall numerically construct nonlinear stationary states to \eqref{PDNLS}, or equivalently \eqref{PDNLSevol}. 
These also include the well-known ground states for the classical NLS.
For the sake of illustration, we shall also derive explicit formulas for the one-dimensional case and compare them with the 
well-known formulas for the classical (derivative) NLS. 
\item Certain perturbations of these stationary states will form the class of initial data considered in the numerical time-integration of \eqref{PDNLSevol}. 
The numerical algorithm used to perform the respective simulations is detailed in Section \ref{sec:numtime}. In it, we also 
include several basic numerical tests which compare the new model \eqref{PDNLSevol} to the classical (derivative) NLS. 
\item Analytical results yielding global well-posedness of \eqref{PDNLSevol} with either full or partial off-axis variations are given in Sections \ref{sec:full} and \ref{sec:anpartial}, respectively. 
\item In the former case, the picture is much more complete, which allows us to perform a numerical study of the (in-)stability properties of 
the corresponding stationary states, see Section \ref{sec:numfull}. 
\item In the case with only partial off-axis variations, the problem of global existence is more complicated and one needs to distinguish between the cases where the action of $P_\eps$ 
is either parallel or orthogonal to the self-steepening. Analytically, only the former case can be treated so far (see Section \ref{sec:anpartial}). 
Numerically, however, we shall present simulations for both of these cases in Section \ref{sec:numpartial}. 
\end{itemize}


\section{Stationary states}\label{sec:ground}

In this section, we focus on {\it stationary states}, i.e., time-periodic solutions to \eqref{PDNLS} given in the following form:
\begin{equation}\label{gstate}
u(t,x_1,x_2)= e^{it}Q(x_1,x_2).
\end{equation} 
The function $Q$ then solves 
\begin{equation}
	 P_\eps Q=\Delta Q+(1+i\mbox{\boldmath $\delta$}\cdot 
	 \nabla)(|Q|^{2\sigma}Q)
	 \label{PDNLSsol},
\end{equation}
subject to the requirement that $Q(x)\to 0$ as $|x|\to \infty$. Every non-zero solution $Q(x)\in \C$ gives rise to a 
solitary wave solution (with speed zero) to \eqref{PDNLS}. These solitary waves will be an important benchmark for our numerical simulations later on. 
Note that in \eqref{gstate} we only allow for a simple time-dependence 
$\exp(i\omega t)$ with $\omega=1$ in \eqref{gstate}. This is not a restriction for the usual 2D NLS, given its scaling invariance, 
but it is a restriction for our model in which this invariance is broken (see also \cite{GNW, LSS1} for the connection between $\omega$ and 
the speed of stable solitary waves).

For the classical NLS, i.e., $\eps=0$ and $|\bd | =0$, there exists a particular solution $Q$, called {\it the nonlinear ground state}, which is 
the unique radial and positive solution to \eqref{PDNLSsol}, cf. \cite{Fi, Sul}. Recall that in dimensions $d=2$ the NLS is already $L^2$-critical and thus, ground states, 
in general, cannot be obtained as minimizers of the associated energy functional (which is the same for both $\eps=0$ and $\eps >0$, see \cite{DLS}).
As we shall see below for $\eps>0$, the regularization via $P_\eps$  
yields a natural modification of the ground state $Q$ by smoothly widening its profile (while conserving positivity). We shall thus also refer to these solutions $Q$ as the 
ground states for \eqref{PDNLSsol} with $|\bd| = 0$ and $\eps>0$. At present, there are 
unfortunately no analytical results on the existence and uniqueness of such modified ground states available. However, our 
numerical algorithm indicates 
that they exist and are indeed unique (although, in general no longer radially symmetric).

The situation with 
derivative nonlinearity $|\bd|\not =0$ is somewhat more complicated, since in this case, solutions $Q$ to \eqref{PDNLSsol} are always complex-valued 
and hence the notion of a ground state does not directly extend to this case (recall that uniqueness is only known for positive solutions). 
At least in $d=1$, however, explicit calculations (see below) show, that there is a class of smooth $\bd$-dependent stationary solutions to \eqref{PDNLSsol}, which 
for $|\bd|=0$ yield the family of $\eps$-ground states.


\subsection{Explicit solutions in 1D}\label{sec:formulas}
In one spatial dimension, equation \eqref{PDNLSsol} allows for explicit formulas, 
which will serve as a basic illustration for the combined effects of self-steepening and off-axis variations. 
Indeed, in one spatial dimension, equation \eqref{PDNLS} simplifies to
\begin{equation}\label{eq:dNLS1d}
i (1-\eps^{2}\partial_{x}^{2})\partial_{t}u + \partial_{x}^{2}u + (1+ i\delta\partial_{x})(|u|^{2\sigma}u) =0.
\end{equation}
Seeking a solution of the form \eqref{gstate} thus yields the following ordinary differential equation:
\begin{equation}\label{eq:Q}
(1+\eps^{2})Q'' + (|Q|^{2\sigma}-1)Q +i\delta(|Q|^{2\sigma}Q)' = 0.
\end{equation}
To solve this equation, we shall use the polar representation for $Q(x)\in \C$
$$Q(x)=A(x)e^{i\theta(x)}, \quad A(x), \theta(x)\in \R$$ 
where we impose the requirement that 
$A(x)\ge 0$ and $\lim_{x\to\pm\infty}A(x)=0$.
Plugging this ansatz into \eqref{eq:Q}, factoring $e^{i\theta}$ out and isolating 
the real and imaginary part yields the following coupled system:
\begin{align*}
      \big(1+\eps^{2}\big)A'' + (A^{2\sigma}-1)A  -A\theta'\Big( (1+\eps^{2})\theta^{\prime} + \delta A^{2\sigma} \Big)=0,\\
      \big(1+\eps^{2}\big)\big(A\theta'' + 2\theta'A' \big) +(2\sigma +1)\delta A^{2\sigma}A'=0 .
\end{align*}
Multiplying the second equation by $A$ and integrating from $-\infty$ to $x$ gives
$$
(1+\eps^{2})\theta' = -\frac{(2\sigma+1)\delta A^{2\sigma}}{2(\sigma +1)},
$$
where here we implicitly assume that $A^{2}\theta'$ vanishes at infinity.
Using the above, we infer that the amplitude solves
\begin{equation}\label{Amp}
\big(1+\eps^{2}\big)A'' + (A^{2\sigma}-1)A  +\frac{(2\sigma+1)\delta^{2}}{4(1+\eps^{2})(\sigma+1)^{2}} A^{4\sigma+1} =0,
\end{equation}
while the phase is given a-posteriori through
\begin{equation}\label{phase}
\theta(x) =  -\frac{(2\sigma+1)\delta}{2(1+\eps^{2})(\sigma+1)}\int_{-\infty}^{x} A^{2\sigma}(y)\;dy.
\end{equation}

After some lengthy computation, similar to what is done for the usual NLS, cf. \cite{Fi}, the 
solution to (\ref{Amp}) can be written in the form
\begin{equation}\label{Ampex}
    A (x) = \left(\frac{2(\sigma+1)}{1+K_{\eps,\delta}\cosh 
   \left( \frac{2 \sigma x}{\sqrt{1+\eps^{2}}}\right)}\right)^{1/(2\sigma)},
\end{equation}
where 
$K_{\eps,\delta}=\sqrt{1+\frac{\delta^{2}}{1+\eps^{2}}}>0$.
In view of (\ref{phase}), this implies that the phase function $\theta$ is given by
\begin{equation}\label{phaseex}
    \theta (x) = 
    -\sgn(\delta)(2\sigma+1)\arctan\left(\frac{\sqrt{1+\eps^{2}}}{|\delta|}\left(1+K_{\eps,\delta}e^{\frac{2\sigma x}{\sqrt{1+\eps^{2}}}}\right)\right),
\end{equation}
where we omitted a physically irrelevant constant in the phase (clearly, $Q$ is only unique up to multiplication by a constant phase).

Note that in the case with no self-steepening $\delta=0$, the phase $\theta$ is zero. Thus, $Q(x)\equiv A(x)$ and we find
$$
Q(x)=(\sigma+1)^{1/(2\sigma)}\sech^{1/\sigma}\left(  \frac{\sigma x}{\sqrt{1+\eps^{2}}}\right).
$$
For $\eps=0$, this is the well-known ground state solution to \eqref{NLS} in one spatial dimension, cf. \cite{Fi, Sul}. 
We notice that adding the off-axis dispersion ($\eps >0$) widens 
the profile, causing it to decay more slowly as $x\to \pm \infty$ as can be 
seen in Fig.~\ref{fig1d} on the left. On the right of 
Fig.~\ref{fig1d}, it is shown that the maximum of the ground state 
decreases with $\sigma$ but that the peak becomes more compressed. 

\begin{figure}[htb!] 
  \includegraphics[width=0.49\textwidth]{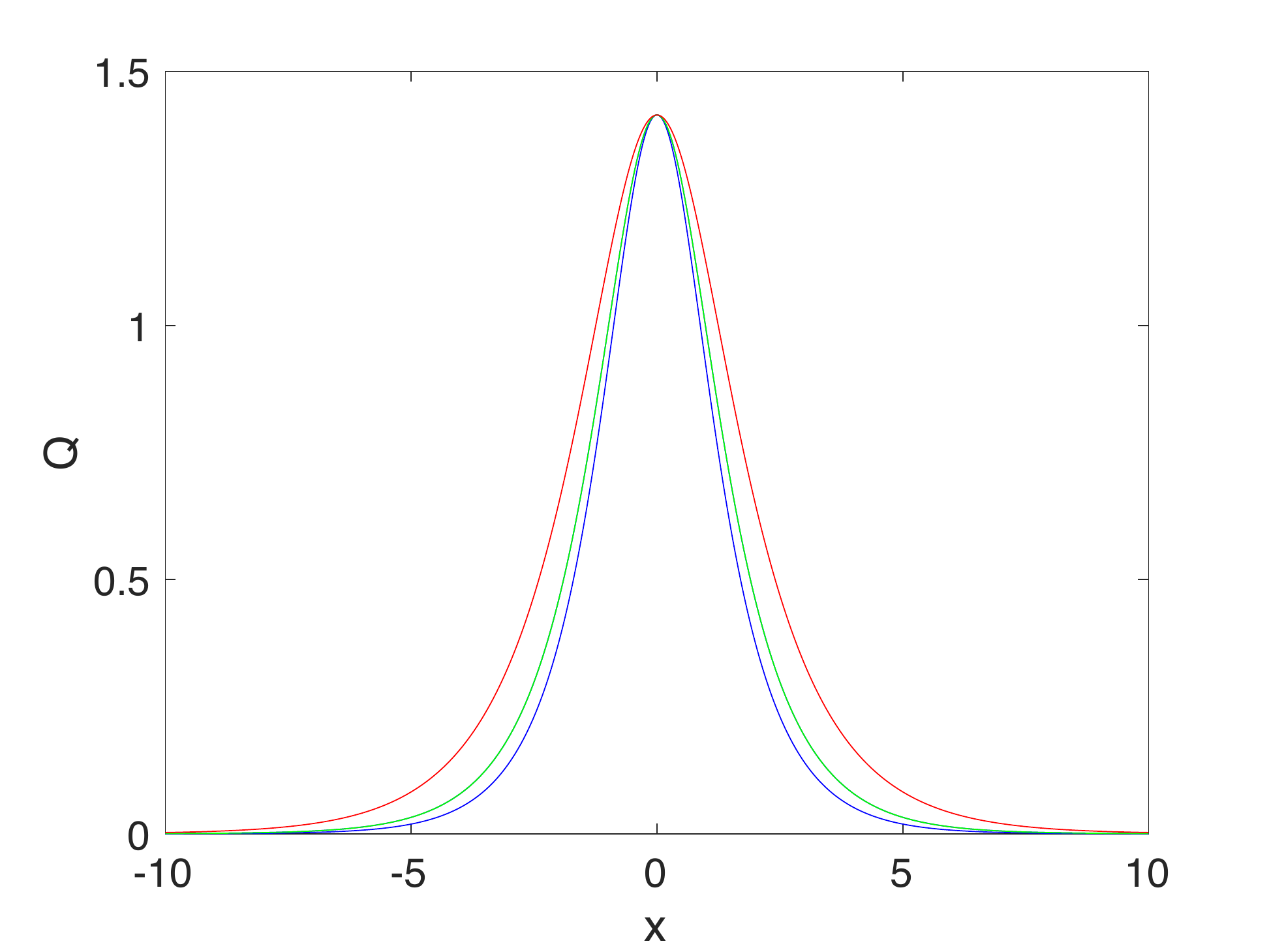}
  \includegraphics[width=0.49\textwidth]{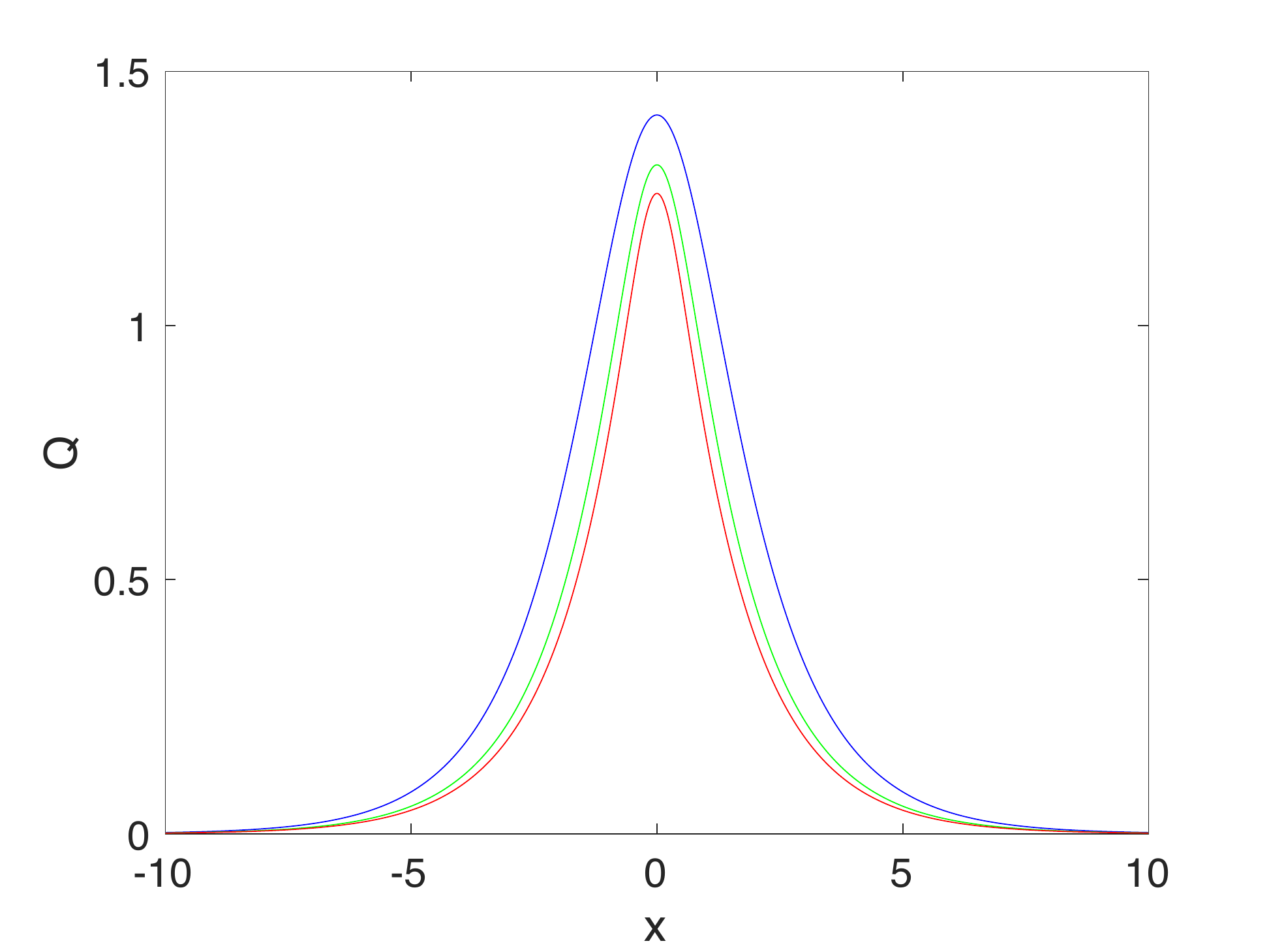}
 \caption{Ground state solution to (\ref{Ampex}) with $\delta=0$: On the 
 left for $\sigma=1$ and $\eps=0$ (blue), $\eps=0.5$ (green) and 
 $\eps=1$ (red). On the right for $\eps=1$ and $\sigma=1$ (blue), 
 $\sigma=2$  (green) and $\sigma=3$ (red). }
 \label{fig1d}
\end{figure}

\begin{remark}
The ($\sigma$-generalized) one-dimensional derivative NLS can be obtained from \eqref{eq:dNLS1d} by putting $\eps =0$, rescaling $$u(t,x)= \delta^{-1/(2\sigma)}\tilde{u}(t,x),$$ and letting $\delta\to\infty$. Note that $\tilde{u}$ solves
$$
i \partial_{t}\tilde{u} + \partial_{x}^{2}\tilde{u} + (\delta^{-1}+ i\partial_{x})(|\tilde{u}|^{2\sigma}\tilde{u}) =0.
$$
Denoting $\tilde{Q}=\tilde{A}e^{i\tilde \theta(x)}$, we get from 
(\ref{Ampex}) and (\ref{phaseex}) the well-known zero-speed solitary wave solution
of the derivative NLS, i.e.,
$$
\tilde{A}(x) = \big(2(\sigma+1)\sech (2\sigma x)\big)^{1/(2\sigma)}, 
\quad \tilde \theta (x) = -(2\sigma+1) \arctan (e^{2\sigma x}).$$ 
The stability of these states has been studied in, e.g., \cite{CO, LSS1, GNW}.
\end{remark}


\subsection{Numerical construction of stationary states}\label{sec:numground} 

In more than one spatial dimension, no explicit formula is known for $Q$. 
Instead, we shall numerically construct $Q$ by following an approach similar to those in \cite{KS, KMS}. 
Since we can expect $Q$ to be rapidly decreasing, we use a Fourier spectral method and approximate
$$
\mathcal{F}(Q)\equiv \widehat{Q}(\xi_1,\xi_2) = \frac{1}{2\pi}\iint_{\R^{2}}Q(x_1,x_2)e^{-ix_{1}\xi_1}e^{-ix_{2}\xi_2}\;dx_1 dx_2,
$$
by a discrete Fourier transform which can be efficiently computed via the 
Fast Fourier Transform (FFT). 
In an abuse of notation, we shall in the following use the same symbols for the discrete and continuous Fourier transform.
To apply FFTs, we will use a {\it computational domain} of the form
\begin{equation}\label{domain}
\Omega=[-\pi,\pi]L_{x_1}\times[-\pi,\pi]L_{x_2},
\end{equation}  
and choose $L_{x_1}$, $L_{x_2}>0$ sufficiently large so that the obtained Fourier coefficients of $Q$ decrease to 
machine precision, roughly $10^{-16}$, which in practice is slightly larger due to unavoidable rounding errors. 

Now, recall that for a solution of the form (\ref{gstate}) to satisfy (\ref{PDNLS}), the function $Q$ needs to solve \eqref{PDNLSsol}.
In Fourier space, this equation takes the simple form
\begin{equation*}\label{PDNLSsolf}
	\widehat{Q}(\xi_1,\xi_2)= \widehat \Gamma_\eps \mathcal F{(|Q|^{2\sigma}Q)}(\xi_1, \xi_2) ,
\end{equation*}
where 
\begin{equation*}
\widehat \Gamma_\eps(\xi_1, \xi_2)= 
\frac{(1-\delta_{1}\xi_{1}-\delta_{2}\xi_{2} )}{1+\xi_1^{2} + \xi_2^{2} + \eps^{2}\sum_{i=1}^{k}\xi_{i}^{2}}.
\end{equation*}
For $\delta_1=\delta_2=0$, the solution $Q$ can be chosen to be real, but this 
will no longer be true for $\delta_{1,2}\not =0$. In the latter situation, 
we will decompose $$Q(x_1,x_2)=\alpha(x_1, x_2)+i\beta(x_1, x_2),$$ 
and separate (\ref{PDNLSsol}) 
into its real and imaginary part, yielding a coupled nonlinear system for $\alpha, \beta$.  
By using FFTs, this is equivalent to the following system for $\widehat{\alpha}$ and $\widehat{\beta}$:
\begin{equation*}
\left\{  
	\begin{split}
		\widehat{\alpha}(\xi_1, \xi_2) - \widehat \Gamma_\eps \mathcal F \Big( \big(\alpha^2 +  \beta^2\big)^\sigma \alpha\Big)(\xi_1, \xi_2)=0, \\
		\widehat{\beta}(\xi_1, \xi_2) - \widehat \Gamma_\eps \mathcal F \Big( \big(\alpha^2 + \beta^2\big)^\sigma \beta\Big)(\xi_1, \xi_2)=0.
	\end{split}
\right.
\end{equation*}
Formally, the system can be written as $M(\widehat q)=0$ where $\widehat q=(\widehat{\alpha},\widehat{\beta})^\top$ and 
solved via a {\it Newton iteration}. One thereby starts from an initial 
iterate $\widehat q^{(0)}$ and computes the $n$-th iterate via the well known formula
\begin{equation}
    \widehat q^{(n)}=\widehat q^{(n-1)}-J\Big(\widehat q^{(n-1)}\Big)^{-1}M\Big(\widehat q^{(n-1)}\Big) \quad n\in\mathbb{N}
    \label{newton},
\end{equation}
where ${J}$ is the Jacobian of $M$ with respect to $\widehat q$. Since our required numerical resolution makes it 
impossible to directly compute the action of the inverse Jacobian, we instead 
employ a Krylov subspace approach as in \cite{GMRES}. Numerical experiments show that when the initial iterate $\widehat q^{(0)}$ 
is sufficiently close to the final solution, we obtain the expected quadratic convergence of our scheme and reach a precision of 
order $10^{-10}$ after only 4 to 8 iterations. 

As a basic test case, we compute the ground state 
of the standard two-dimensional focusing NLS with $\sigma =1$,   
using the initial iterate
$$q^{(0)}(x_1,x_2)=\mbox{sech}^{2}\Big(\sqrt{x_1^{2}+ x_2^{2}}\Big)$$
on the computational domain \eqref{domain} with $L_{x_1}=L_{x_2}=5$.
By choosing $N_{x_1}=N_{x_2}=2^{9}$ many Fourier modes, we have after seven 
iterations of \eqref{newton} a residual smaller than $10^{-12}$. The obtained
solution is given on the left of Fig.~\ref{NLS2dsol}. As expected, the solution is radially symmetric.
\begin{figure}[htb!]
  \includegraphics[width=0.32\textwidth]{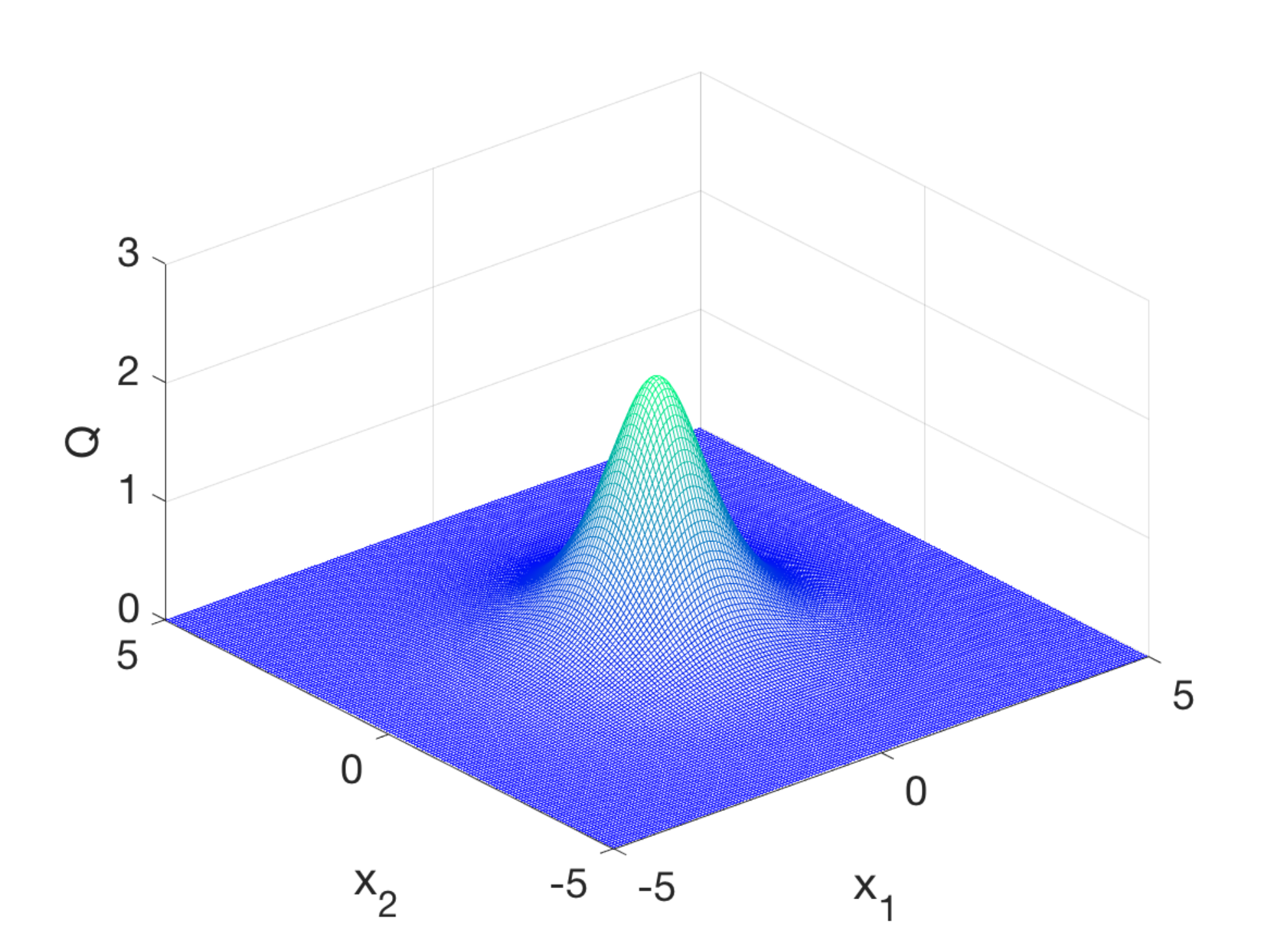}
  \includegraphics[width=0.32\textwidth]{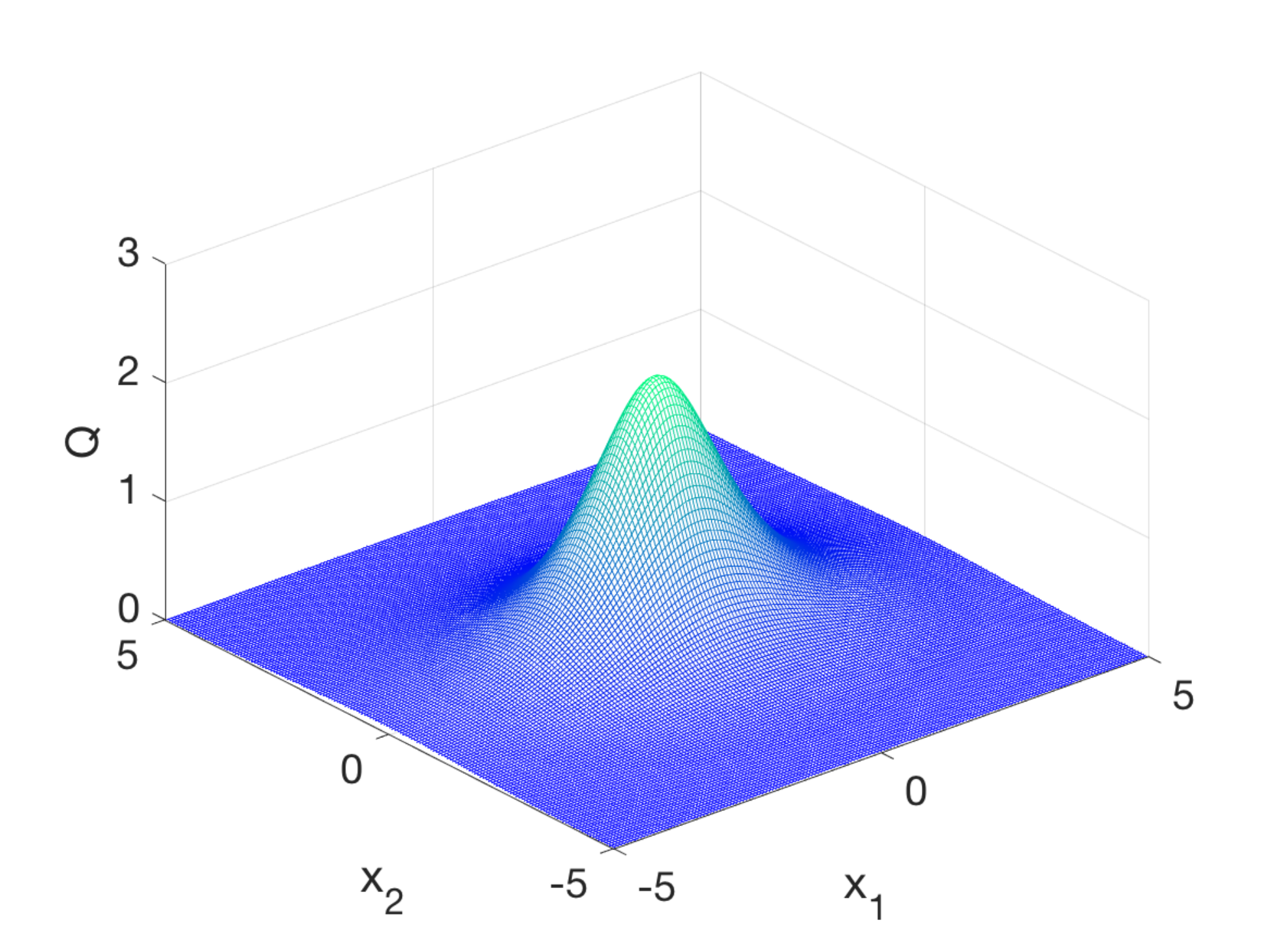}
  \includegraphics[width=0.32\textwidth]{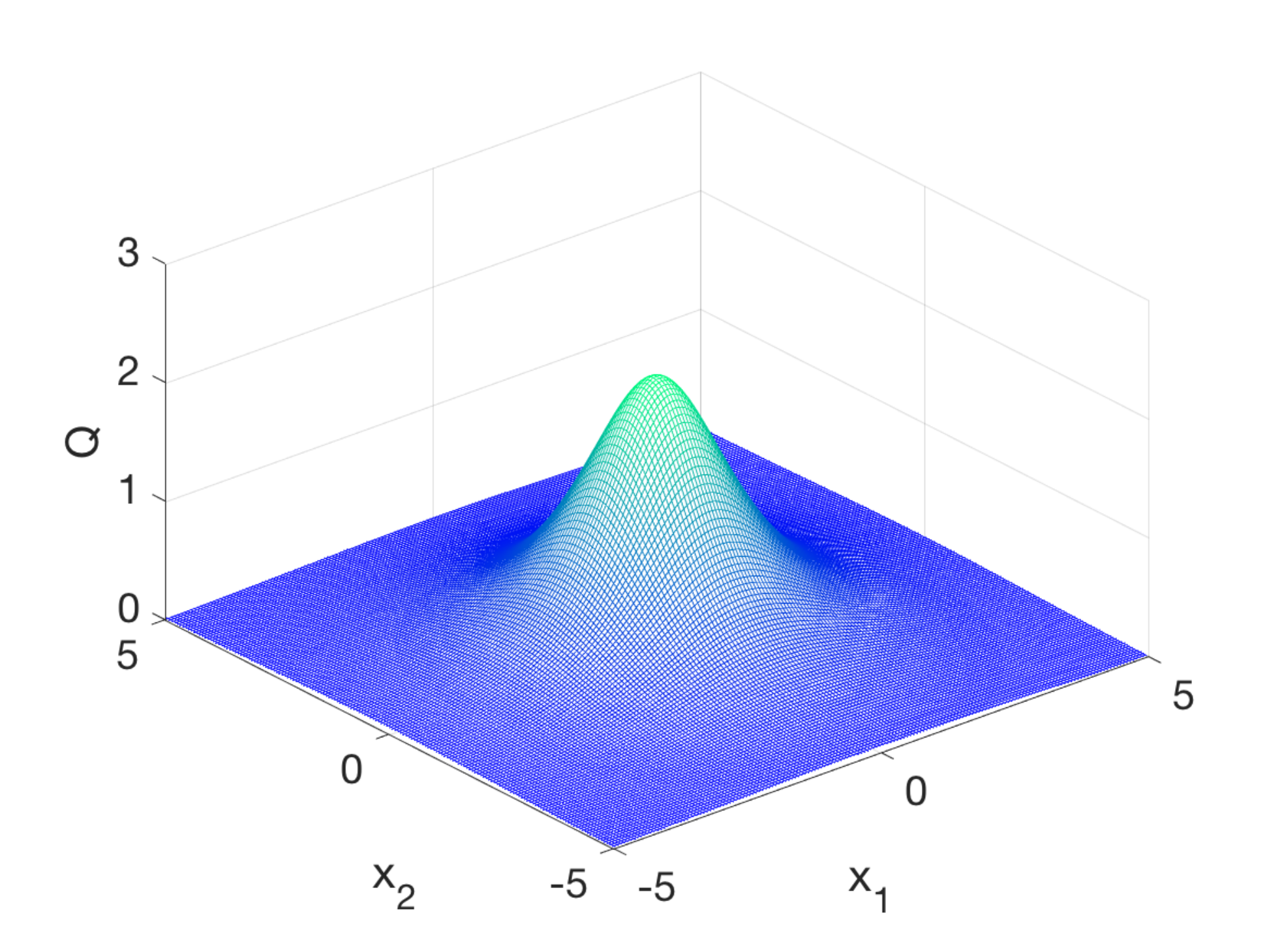}
 \caption{Ground state solution to equation (\ref{PDNLS}) with $\sigma =1$ and 
 $\mathbf{\delta}=0$: On the 
 left for $\eps=0$, in the middle for $\eps=1$ and $k=1$ (partial off-axis dependence), on 
 the right for $\eps=1$ and $k=2$ (full off-axis dependence). }
 \label{NLS2dsol}
\end{figure}

The numerical ground state solution hereby obtained will then be used as an initial iterate for the situation with non-vanishing $\eps$ and $\mbox{\boldmath $\delta$}$, as follows:

{\it Step 1:} In the case without self-steepening $\delta_1=\delta_2=0$, the iteration is straightforward even for relatively large values such as $\eps=1$. 
It can be seen in  the middle of Fig.~\ref{NLS2dsol}, that the ground state for $\eps=1$ and $k=1$ is no longer radially symmetric. 
As an effect of the partial off-axis variation, the solution is elongated in the $x_1$-direction. In the case of full off-axis dependence, 
the ground state for the same value of $\eps=1$ can be seen in Fig.~\ref{NLS2dsol} on the right.  
The solution is again radially symmetric, but as expected less localized than the ground state of the classical standard NLS. 
This is consistent with the explicit formulas for $Q$ found in the one dimensional case above.

{\it Step 2:} In the case with self-steeping $\delta_1=\delta_2=1$, smaller intermediate steps have to be used in the iterations: 
We increment $\mbox{\boldmath $\delta$}$, by first varying only $\delta_{2}$ in steps of $0.2$, always using the last computed value for $Q$ as an initial 
iterate for the slightly larger $\mbox{\boldmath $\delta$}$. The resulting solution $Q$ can be seen in Fig.~\ref{NLS2dsold1}. 
Note that the imaginary part of $Q$ is of the same order of magnitude as the real part. 
\begin{figure}[htb!] 
  \includegraphics[width=0.49\textwidth]{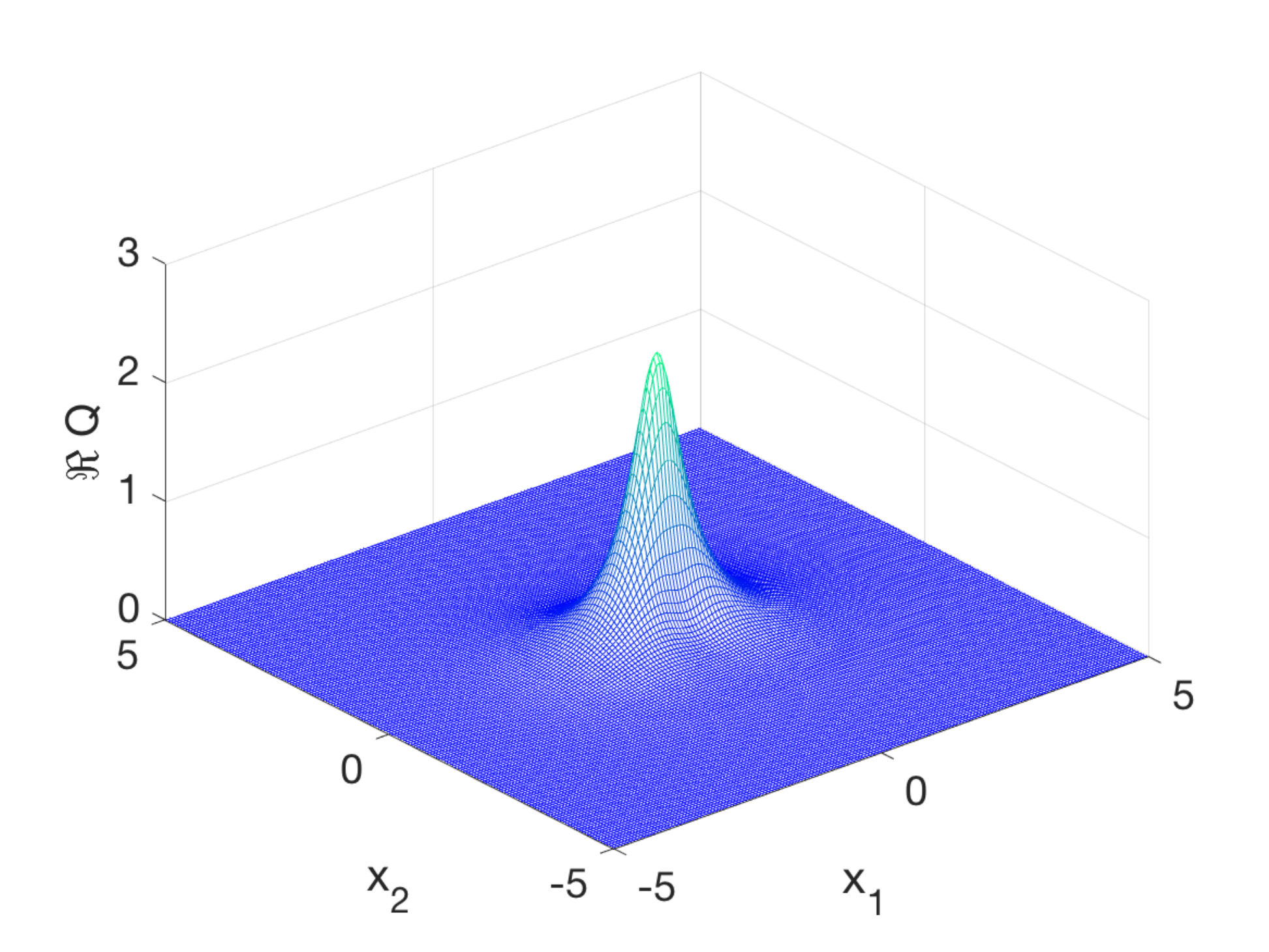}
  \includegraphics[width=0.49\textwidth]{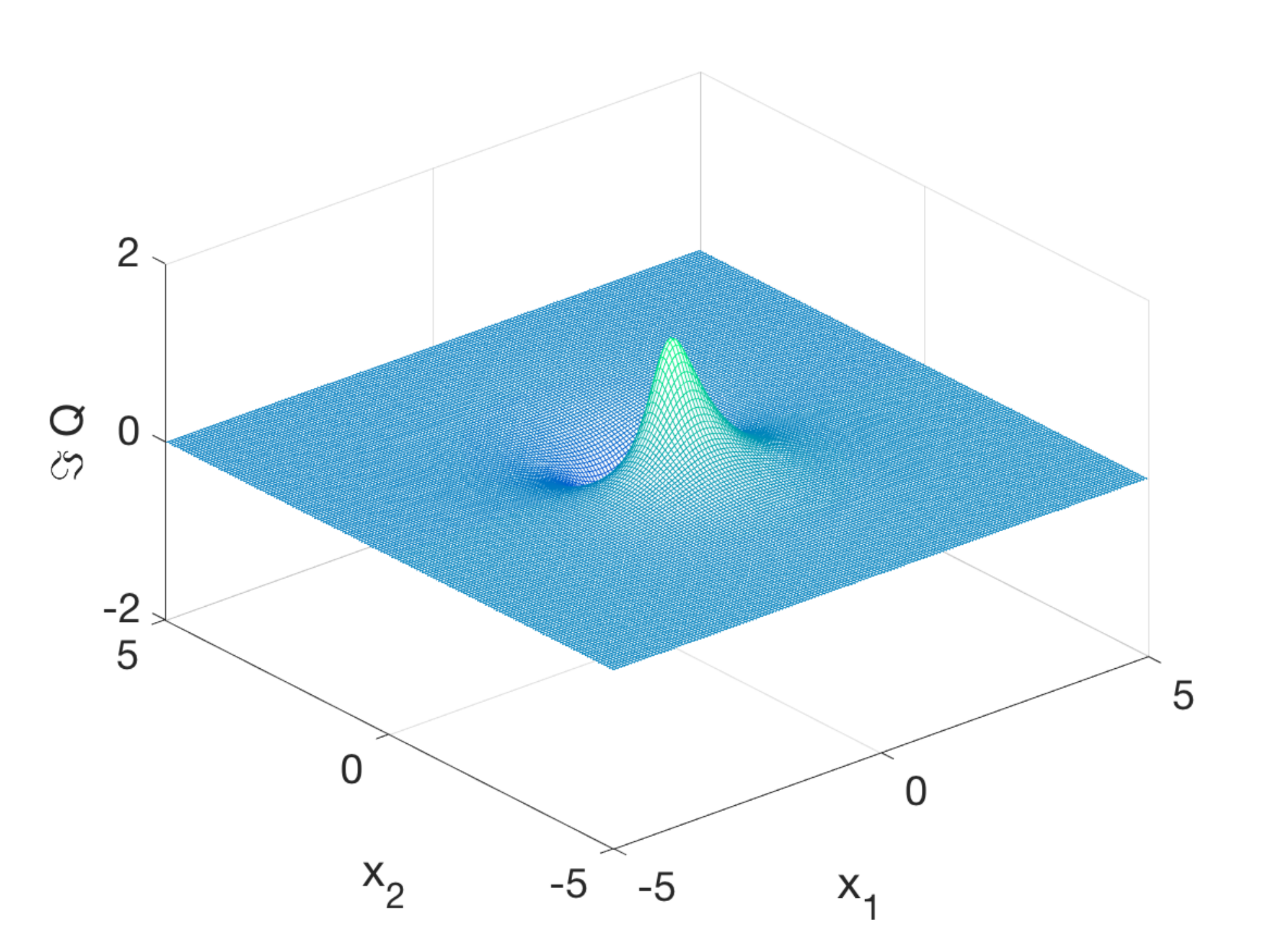}
 \caption{The stationary state solution $Q$ to equation (\ref{PDNLS}) with $\sigma =1$,  
 $\eps=\delta_{1}=0$ and $\delta_{2}=1$: On the 
 left, the real part of $Q$, on 
 the right its imaginary part. }
 \label{NLS2dsold1}
\end{figure}

{\it Step 3:} In order to combine both effects within the same model, we shall use the (zero speed) solitary  
obtained for $\eps=0$ and $\mbox{\boldmath $\delta$} \ne \mathbf{0}$ as an initial 
iterate for the case of non-vanishing $\eps$. In Fig.~\ref{NLS2dsold1e1} we show on the left the stationary state for $\eps=1$, $k=1$, $\delta_{1}=0$ and $\delta_{2}=1$, when the action of $P_\eps$ is
orthogonal to the self-steepening. When compared to the case with $\eps=0$, the solution is seen to be elongated in the $x_1$-direction. 
Next, we simulate when $P_\eps$ acts parallel to the self-steepening, that is when $\eps=1$, $k=1$, $\delta_{1}=1$ 
and $\delta_{2}=0$. The result is shown in the middle of Fig.~\ref{NLS2dsold1e1}. 
In comparison to the former case, the imaginary part of the solution is essentially rotated clockwise by 90 degrees. The elongation effect in the $x_1$-direction is still visible but less pronounced. 
\begin{figure}[htb!]
  \includegraphics[width=0.32\textwidth]{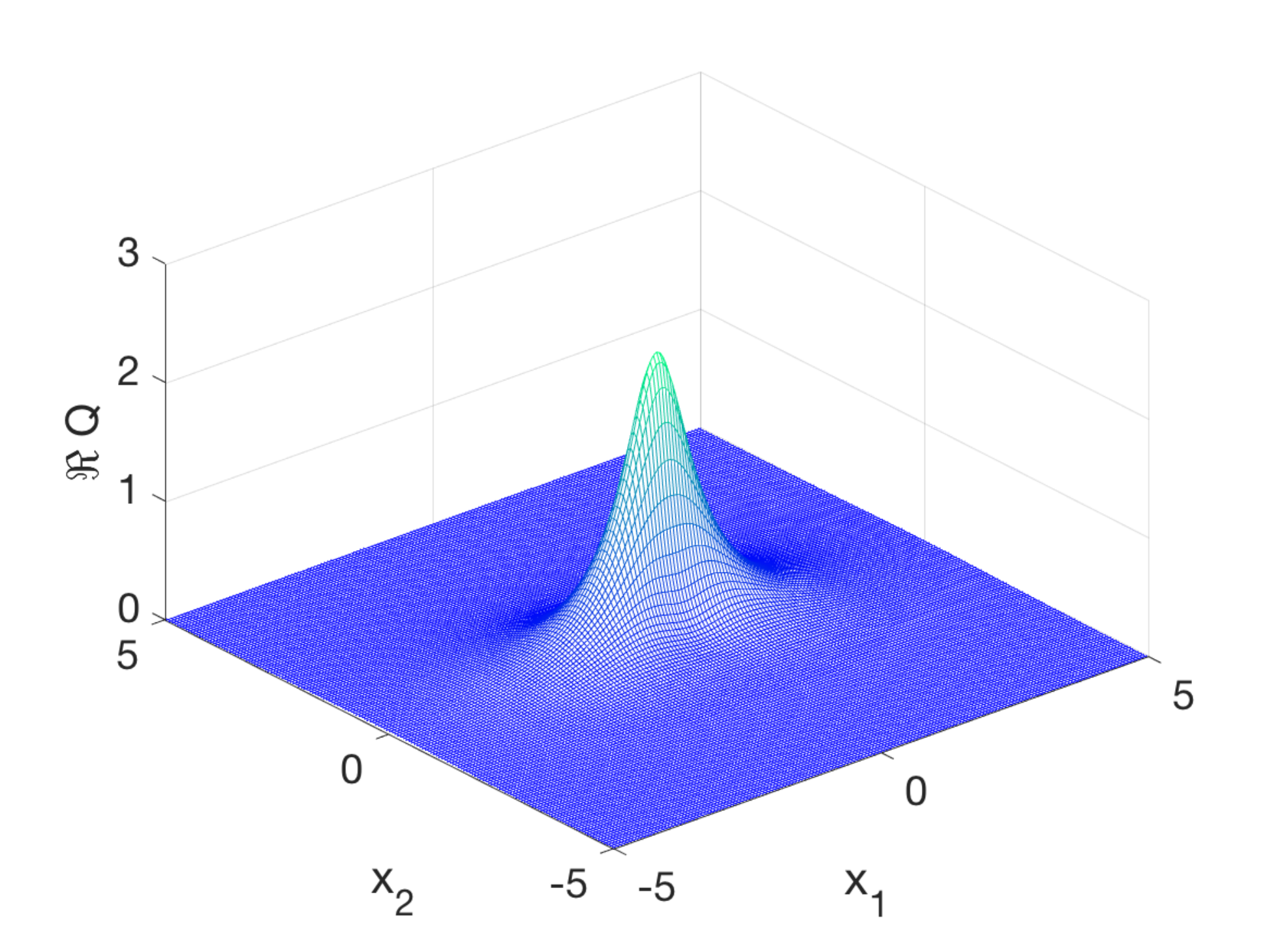}
  \includegraphics[width=0.32\textwidth]{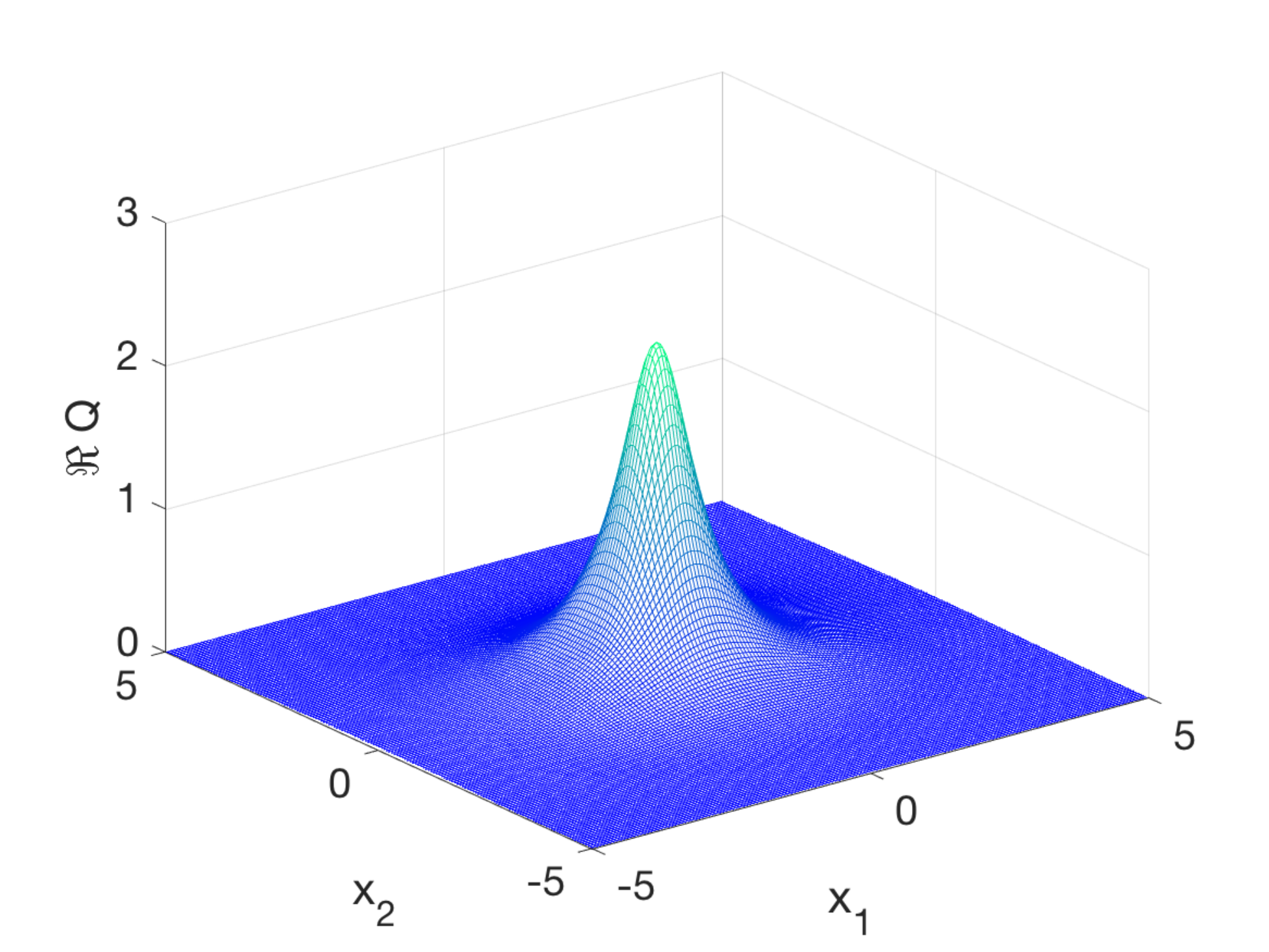}
  \includegraphics[width=0.32\textwidth]{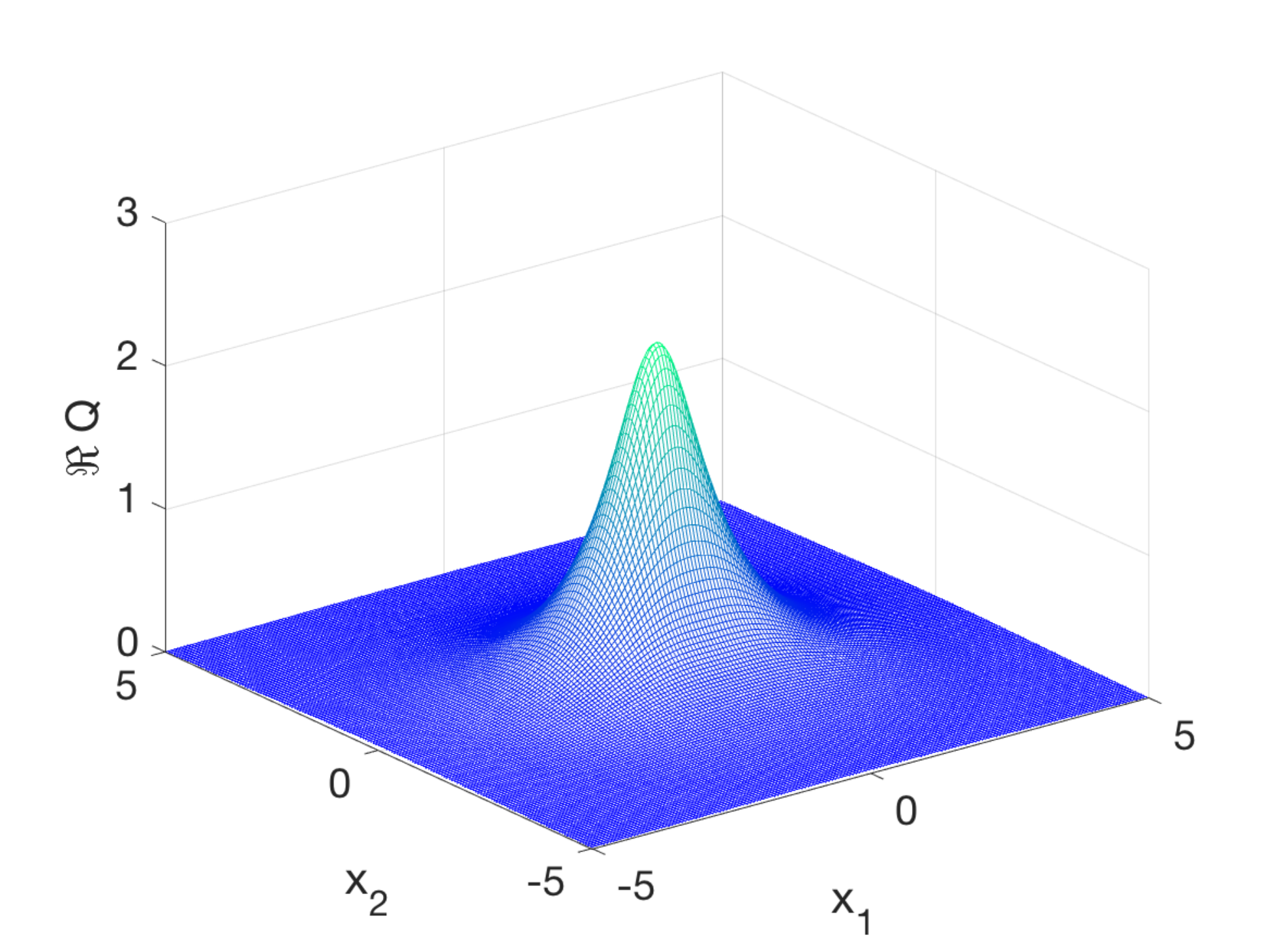}
  \includegraphics[width=0.32\textwidth]{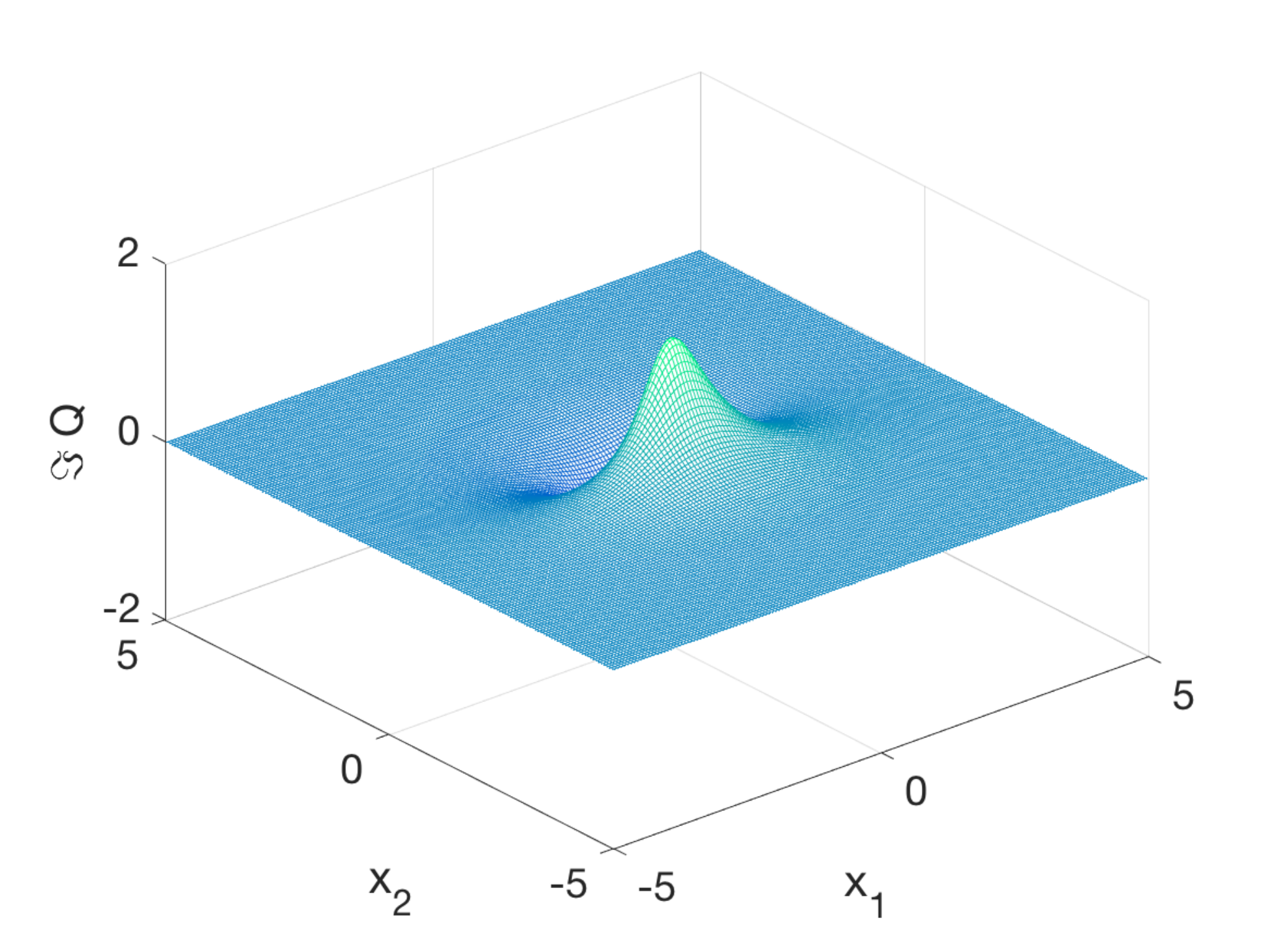}
  \includegraphics[width=0.32\textwidth]{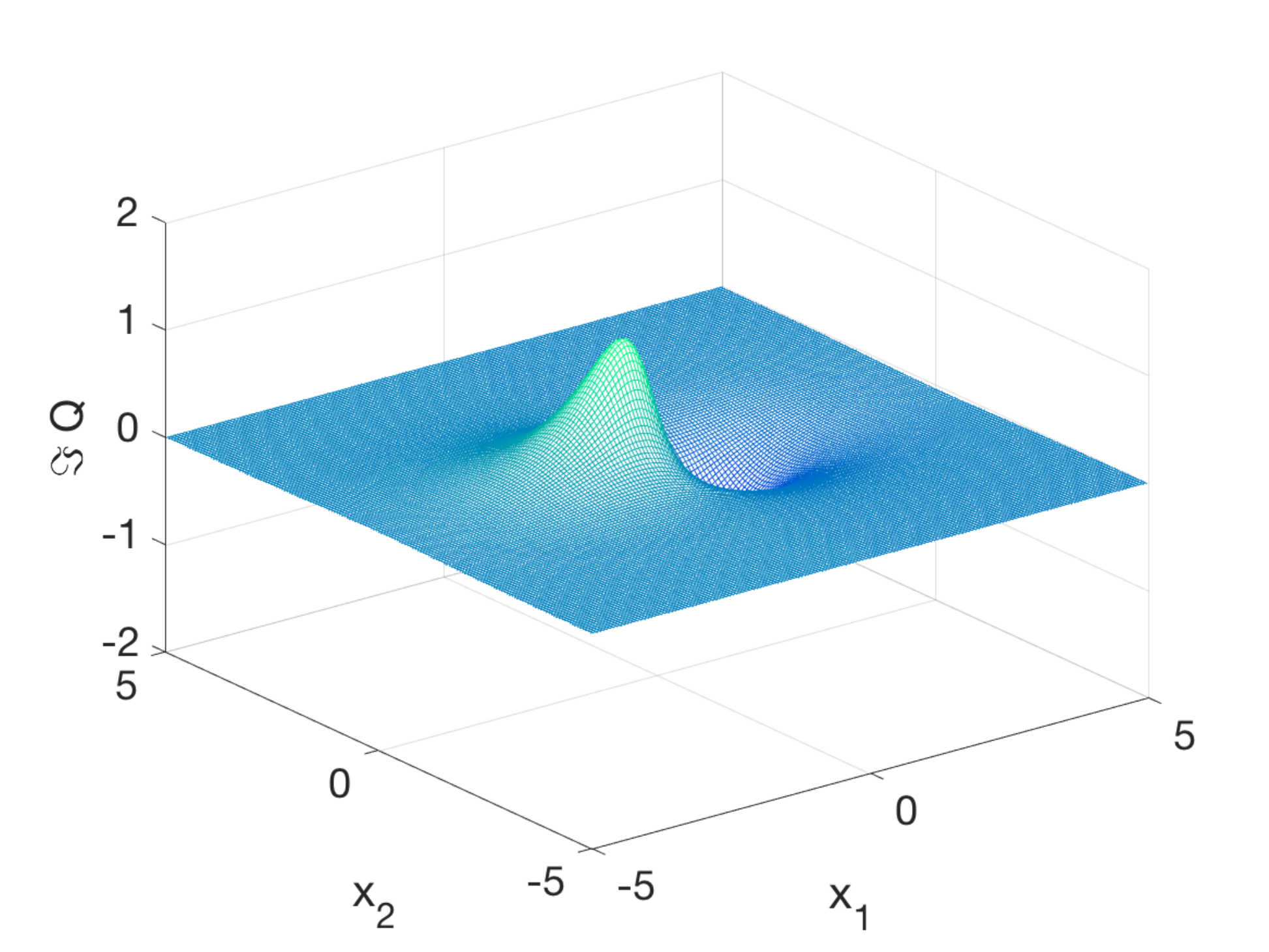}
  \includegraphics[width=0.32\textwidth]{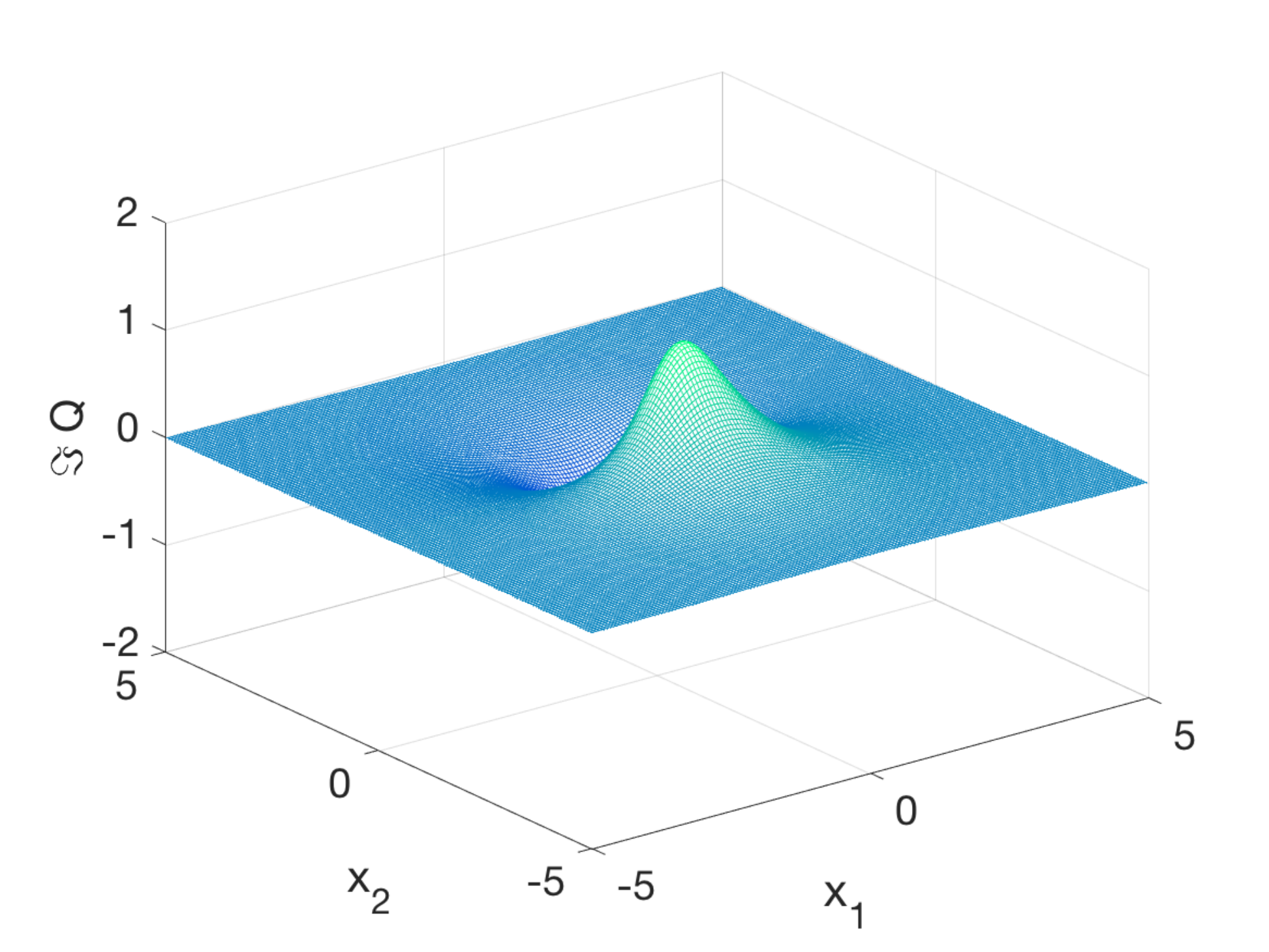}
 \caption{Real and imaginary parts of the stationary state $Q$ to equation (\ref{PDNLS}) with $\sigma =1$: On the 
 left for $\eps=1$, $k=1$, $\delta_{1}=0$ and 
$\delta_{2}=1$, in the middle for $\eps=1$, $k=1$, $\delta_{1}=1$ and 
$\delta_{2}=0$, and on the right for $\eps=1$, $k=2$, $\delta_{1}=0$ and 
$\delta_{2}=1$.}
 \label{NLS2dsold1e1}
\end{figure}

{\it Step 4:} For $\sigma >1$ stationary states become increasingly peaked, as is seen from the 1D picture in Figure \ref{fig1d}. 
Hence, to construct stationary states for higher nonlinear powers in 2D, we will consequently require more Fourier coefficients 
to effectively resolve these solutions. To this end, we work on the numerical domain \eqref{domain} with $L_{x_{1}}=L_{x_{2}}=3$ and 
$N_{x_{1}}=N_{x_{2}}=2^{10}$ 
Fourier modes. We use the ground state obtained for $\sigma=1$ as an initial iterate for the case $\sigma=2,3,$ and follow the same program as outlined above.


\section{Numerical method for the time evolution} \label{sec:numtime}

\subsection{A Fourier spectral method}

In this section, we briefly describe the numerical algorithm used to integrate our model equation in its evolutionary form \eqref{PDNLSevol}. After a Fourier transformation, 
this equation becomes
\begin{equation*}\label{PDNLSevolF}
	\partial_{t}\widehat{u}= -i\widehat{P}_{\eps}^{-1}(\xi)\Big(|\xi|^2 \widehat{u} - (1-\mbox{\boldmath ${\delta}$} \cdot \xi)\widehat{(|u|^{2\sigma}u)}\Big), \quad \xi \in \R^2.
\end{equation*}
Approximating the above by a discrete Fourier transform (via FFT) on a computational domain $\Omega$ given by \eqref{domain}, 
yields a finite dimensional system of ordinary differential equations, which formally reads
\begin{equation}\label{LN}
    \partial_{t}\widehat{u}= {L}_{\eps}\widehat{u}+{N}_{\eps}(\widehat{u}).
\end{equation}
Here ${L}_{\eps}=-iP_{\eps}^{-1}|\xi|^{2}
$ is a linear, diagonal operator in Fourier space, and 
${N}_{\eps}(\widehat{u})$ has a nonlinear and nonlocal dependence on 
$\widehat{u}$. Since $\|L_\eps\|$ can be large, equation \eqref{LN} belongs to a 
family of {\it stiff} ODEs, for which several efficient numerical schemes have been developed, cf. \cite{etna, KR11} where the particular situation of semi-classical NLS is considered. 
Driscoll's composite Runge-Kutta (RK) method \cite{Dri} has proven to be particularly efficient and thus will also be applied in the present work. 
This method uses a stiffly stable third order RK method for the high wave numbers of ${L}_{\eps}$ and combines it with a 
standard explicit fourth order RK method for the low wave numbers of ${L}_{\eps}$ and the nonlinear part ${N}_{\eps}(\widehat{u})$. 
Despite combining a third order and a fourth order method, this approach yields fourth order in-time convergence in many applications. 
Moreover, it provides an explicit method with much larger time steps 
than allowed by the usual fourth order stability conditions in 
stiff regimes. 
  
\begin{remark} The evolutionary form of our model \eqref{PDNLSevol} is in many aspects similar to the well-known 
Davey-Stewartson (DS) system, which is a non-local NLS type equation in two spatial dimensions, cf. \cite{DS, Sul}.
In \cite{KR11, KSDS, KN}, the possibility of self-similar blow-up in DS is studied, using a numerical approach similar to ours.  
\end{remark} 

As a first basic test of consistency, we apply our numerical code to the cubic NLS in 2D i.e. equation \eqref{NLS} with $\sigma =1$.  
As initial data $u_0$ we take the ground state $Q$, obtained numerically as outlined in Section \ref{sec:ground} above.
We use $N_{t}=1000$ time-steps for times $0\le t\le 1$. In this case, we know that the exact time-dependent solution $u$ is simply given by $u=Qe^{it}$. 
Comparing this to the numerical solution obtained at $t=1$ yields an 
$L^\infty$-difference of the order of $10^{-10}$.  
This verifies both the code for the time evolution and the one for the ground state $Q$ which in itself is obtained with an accuracy of order $10^{-10}$.  
Thus, the time evolution algorithm evolves the ground state with the same precision as with which it is known.

For general initial data $u_0$, we shall control the accuracy of our code in two ways: On the one hand, the resolution in space is 
controlled via the decrease of the Fourier coefficients within (the finite approximation of) $\widehat u$. The coefficients of the highest wave-numbers thereby indicate the order of magnitude of the 
numerical error made in approximating the function via a truncated Fourier series. On the other hand, the quality of the time-integration is controlled via the conserved quantity $M_\eps(t)$ defined in (\ref{CLM}). Due to unavoidable numerical errors, the latter will numerically depend on time. 
For sufficient spatial resolution, the relative conservation of $M_\eps(t)$ will overestimate the accuracy in the time-integration by $1-2$ orders 
of magnitude.

\subsection{Reproducing known results for the classical NLS} As already discussed in the introduction of this paper, the cubic NLS in two spatial dimensions is $L^2$-critical 
and its ground state solution $Q$ is strongly unstable. Indeed, any perturbation of $Q$ which 
lowers the $L^2$-norm of the initial data below that of $Q$ itself, is known to produce purely dispersive, global-in-time solutions which 
behave like the free time evolution for large $|t| \gg 1$. 
However, perturbations that increase the $L^2$-norm of the initial data above that of $Q$ are expected to generically produce a (self-similar) blow-up in finite time. This behavior can be reproduced in our simulations. 

To do so, we first take initial data of the form
\begin{equation}\label{ini1}
u_0(x_1,x_2)=Q(x_1,x_2)-0.1e^{-x_1^{2}-x_2^{2}},
\end{equation}
and work on the numerical domain $\Omega$ given by \eqref{domain} with $L_{x_{1}}=L_{x_{2}}=3$. We will use $N_{t}=5000$ time-steps 
within $0\le t\le 5$. We can see on the right of Fig.~\ref{NLS2dsol-} that the 
$L^{\infty}$-norm of the solution decreases monotonically, indicating purely dispersive behavior. 
The plotted absolute value of the solution at $t=5$ confirms this behavior. In addition, the mass
$M(t) \equiv M_0(t)$ is conserved to better than $10^{-13}$, indicating that the problem is indeed well resolved in time. 
\begin{figure}[htb!]
  \includegraphics[width=0.49\textwidth]{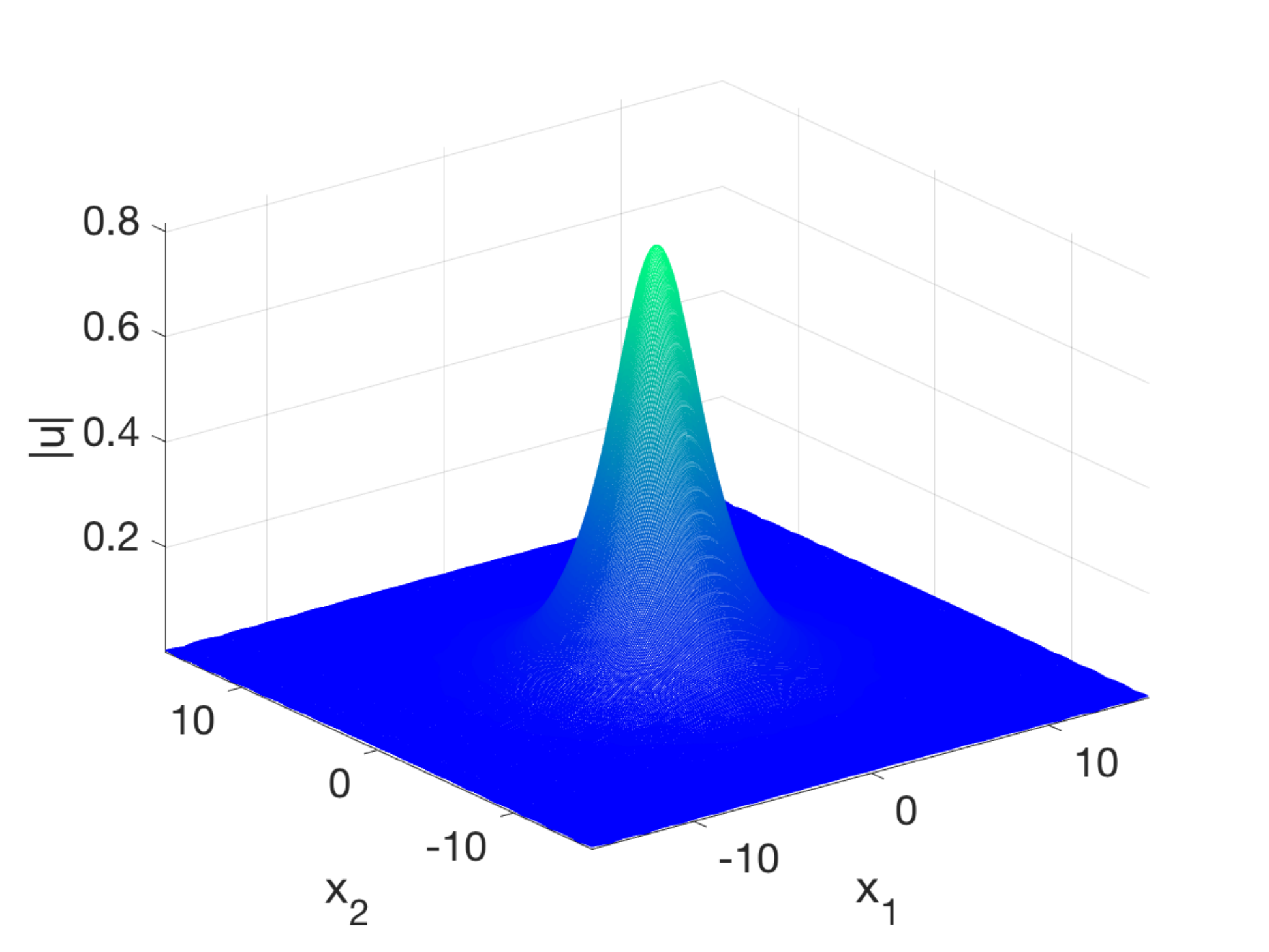}
  \includegraphics[width=0.49\textwidth]{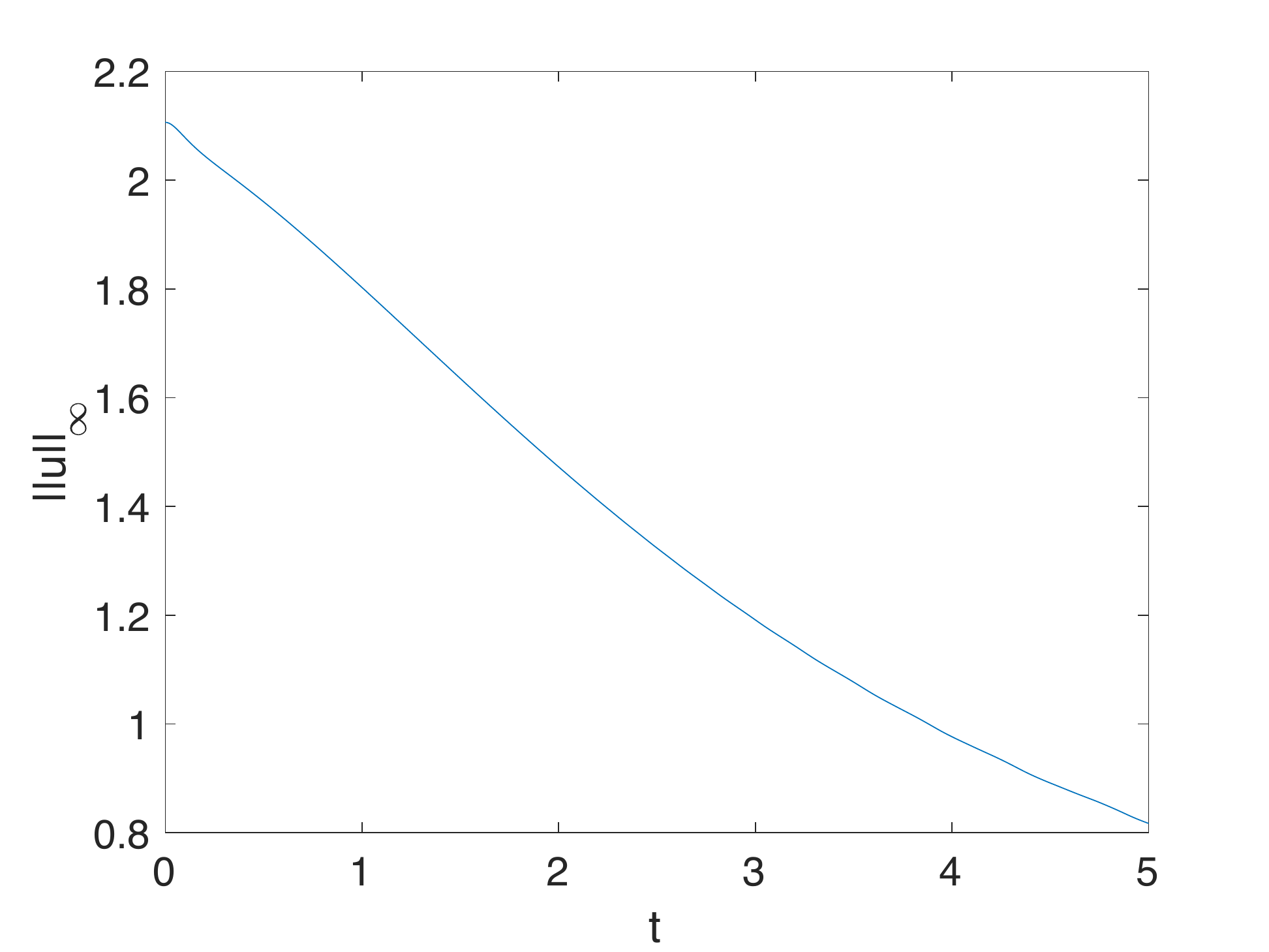}
 \caption{Solution to the classical NLS (\ref{NLS}) with 
 $\sigma=1$ and initial data \eqref{ini1}: on the left $|u|$ at $t=5$, and on the right the $L^{\infty}$-norm of the solution as a function of $t$.}
 \label{NLS2dsol-}
\end{figure}

\begin{remark}
Note that we effectively run our simulations on $\Omega\simeq\mathbb{T}^{2}$, instead of $\mathbb{R}^{2}$. As a consequence, the periodicity will 
after some time induce radiation effects appearing on the opposite side of $\Omega$. 
The treatment of (large) times $t>5$ therefore requires a larger 
computational domain to suppress these unwanted effects.
\end{remark}

Next, for initial data of the form 
\begin{equation}\label{ini2}
u_0(x_1,x_2)=Q(x_1,x_2)+0.1e^{-x_1^{2}-x_2^{2}},
\end{equation}
we again use $N_{t}=5000$ time steps for $0\le t\le 2$. As can be seen in Fig.~\ref{NLS2dsol+} on the right, there is numerical indication for  finite-time blow-up. 
The code is stopped at $t=1.89$ when the relative error in the conservation of mass $M(t)$ drops below $10^{-3}$. The solution for $t=1.88$ can be seen on the 
left of Fig.~\ref{NLS2dsol+}. This is in accordance with the self-similar blow-up established by Merle and 
Rapha\"el, cf. \cite{MR1, MR2}. In particular, we note that the 
result does not change notably if a higher resolution in both $x$ and $t$ is used. 
\begin{figure}[htb!]
  \includegraphics[width=0.49\textwidth]{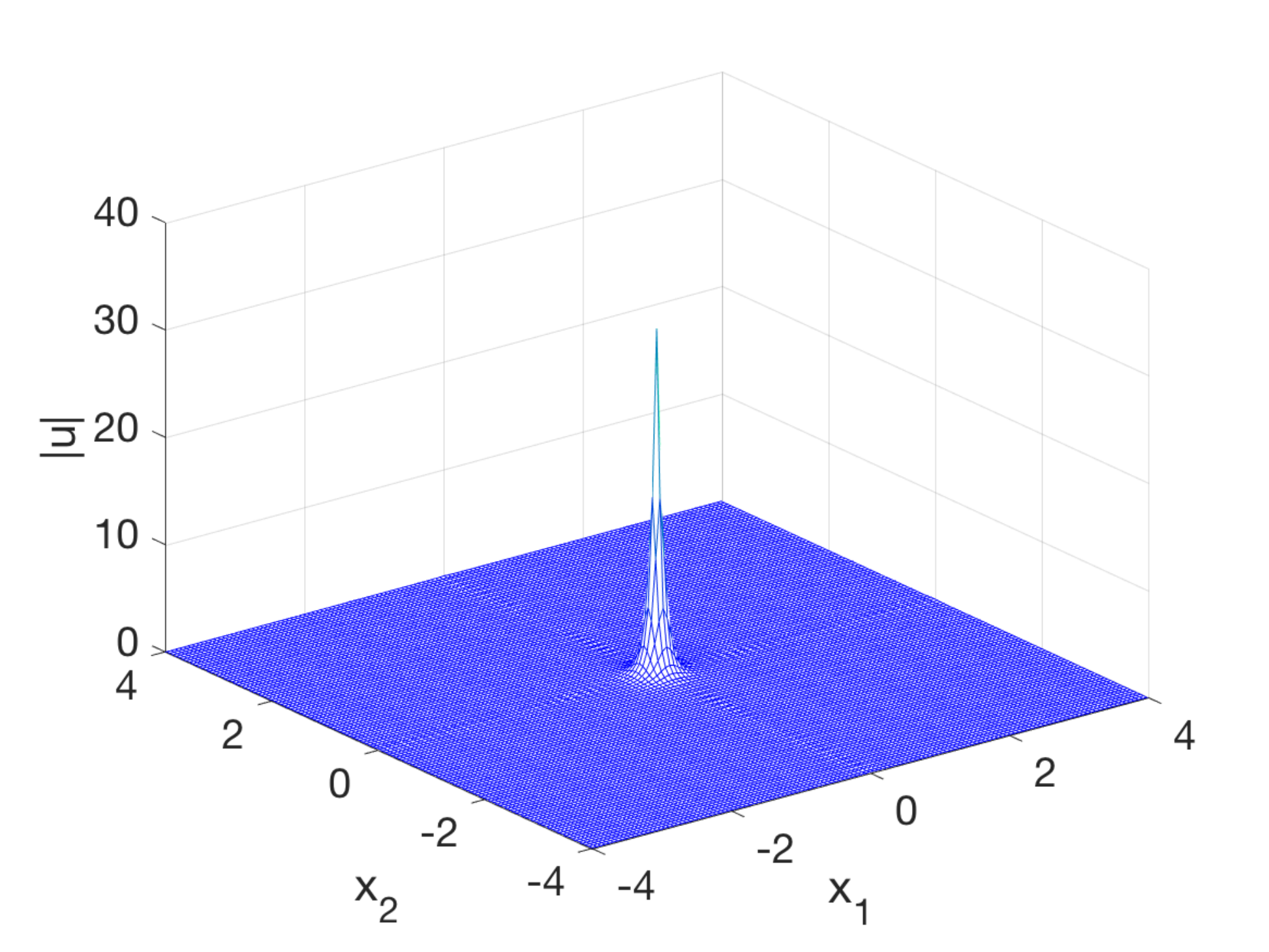}
  \includegraphics[width=0.49\textwidth]{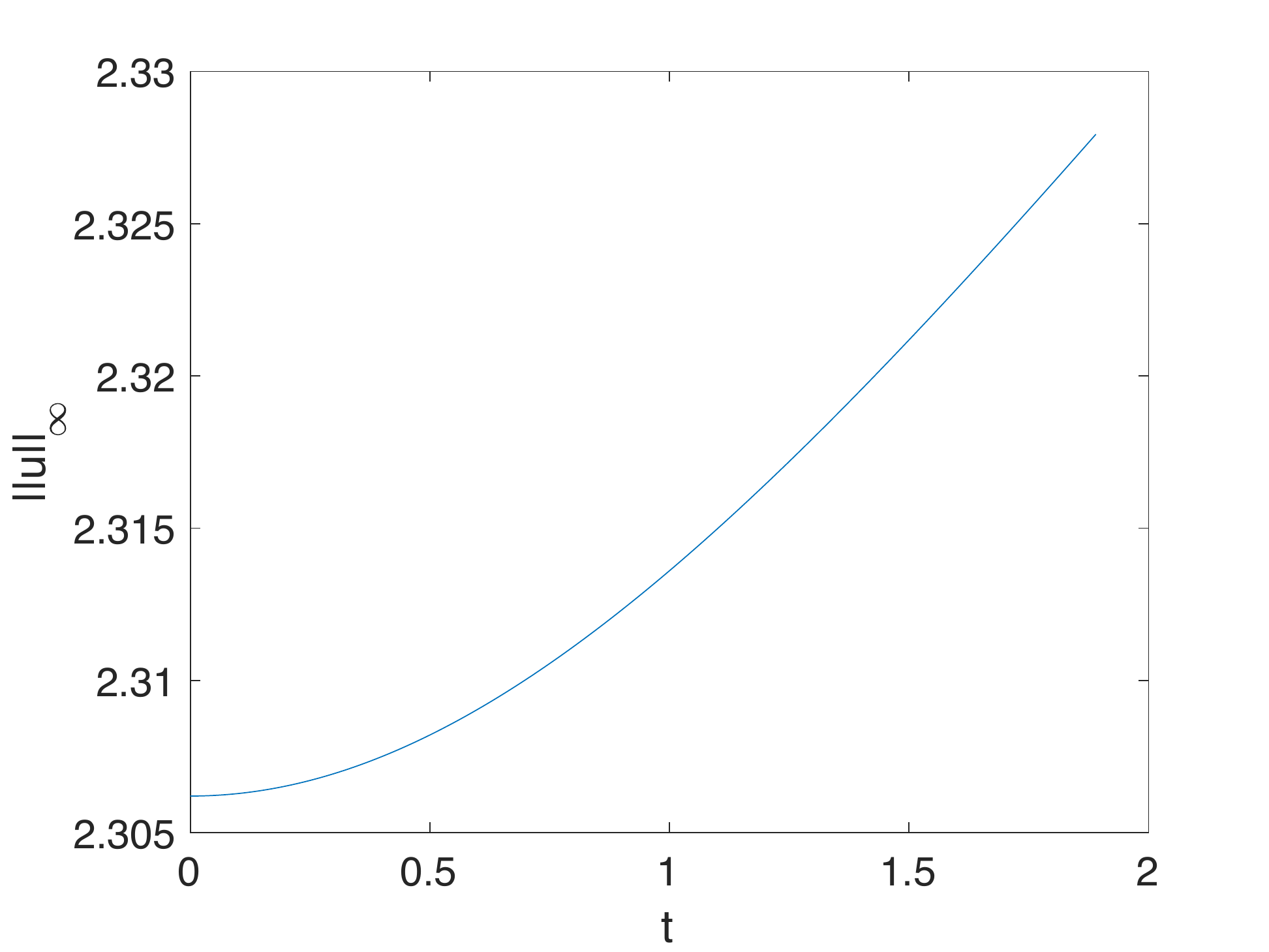}
 \caption{Solution to the classical NLS (\ref{NLS}) with  
 $\sigma=1$ and initial data \eqref{ini2}: on the left $|u|$ at $t=1.88$ and on the right the $L^{\infty}$-norm of the solution as a function of $t$.}
 \label{NLS2dsol+}
\end{figure}

\begin{remark} We want to point out that there are certainly more sophisticated methods available to numerically study self-similar blow-up, see for instance  
\cite{KMS, LSS2, Sul} for the case of NLS type models, as well as \cite{KP14, KP16} for the analogous problem in KdV type equations. 
However, these methods will not be useful for the present work, since as noted before, the model (\ref{PDNLS}) 
does not admit a simple scaling invariance, which is the underlying reason for self-similar blow-up in NLS and KdV type models. 
As a result, all our numerical findings concerning finite-time blow-up have to be taken with a grain of salt. An apparent divergence of certain norms of the solution or overflow errors 
produced by the code can indicate a blow-up, but might also just indicate that one has run out of resolution. 
The results reported in this paper therefore need to be understood as being stated with respect to the given numerical resolution. 
However, we have checked that they remain stable under changes of the resolution within the accessible limits of the computers used to run the simulations.
\end{remark}

\subsection{Time-dependent change of variables in the case with self-steepening} 
In the case of self-steepening, the ability to produce an accurate numerical time-integration in the presence of a derivative nonlinearity ($\mbox{\boldmath $\delta$}\neq 0$) becomes 
slightly more complicated. The inclusion of such a nonlinearity can lead to localized initial data moving (relatively fast) in the direction chosen by $\mbox{\boldmath $\delta$}$. 
In turn, this might cause the 
numerical solution to ``hit" the boundary of our computational domain $\Omega$. 

To avoid this issue, we shall instead perform our numerical computations in a 
moving reference frame, chosen such that the maximum of $|u(t,x)|$ remains fixed at the origin. More precisely, 
we consider the transformation 
$$ x\mapsto x- y(t),$$
and denote $\mathbf{v}(t)= \dot y(t)$.  The new unknown $u(t,x-y(t))$ solves
\begin{equation}\label{PDNLSevolcm}
i \partial_{t}u - i\mathbf{v}\cdot\nabla u+ P^{-1}_{\eps}\Delta u + P_\eps^{-1}(1+i 
\mbox{\boldmath $\delta$} \cdot \nabla )\big(|u|^{2\sigma}u\big) =0.
\end{equation}
The quantity $\mathbf{v}(t)=(v_1(t), v_2(t))$ is then determined by the condition that 
the density $\rho=|u|^{2}$ has a maximum at $(x_1,x_2)=(0,0)$ for all $t\ge 0$. We get from \eqref{PDNLSevolcm} the following equation for $\rho$:
\begin{align*}
    \partial_{t}\rho= &\, \mathbf{v}\cdot\nabla \rho + i\big(\bar{u}P^{-1}_{\varepsilon}\Delta u - uP^{-1}_{\varepsilon} \Delta \bar{u}\big)+i\big(\bar{u}P^{-1}_{\varepsilon}
    (\rho^{\sigma}u) - uP^{-1}_{\varepsilon} (\rho^{\sigma} 
    \bar{u})\big)\nonumber\\
    &\, -\bar{u}P^{-1}_{\varepsilon}\mbox{\boldmath $\delta$} \cdot \nabla (\rho^{\sigma}u) - uP^{-1}_{\varepsilon} \mbox{\boldmath $\delta$} \cdot \nabla(\rho^{\sigma}\bar{u})
    \label{At}.
\end{align*}
Differentiating this equation with respect to $x_{1}$ and $x_{2}$ respectively, and setting $x_{1}=x_{2}=0$ yields 
the desired conditions for $v_{1}$ and $v_{2}$. 

Note that the computation of the additional derivatives 
appearing in this approach is expensive, since in practice it needs to be enforced in every step of the Runge-Kutta scheme. 
Hence, we shall restrict this approach solely to cases where the numerical results appear to be strongly affected by the boundary of $\Omega$. 
In addition, we may always choose a reference frame such that one of the two components of $\mbox{\boldmath $\delta$} $ is zero, which consequently allows us to set either $v_1$, or $v_2$ equal to zero.

\subsection{Basic numerical tests for a derivative NLS in 2D} As an example, we consider the case of a cubic nonlinear, two-dimensional derivative NLS of the following form
\begin{equation}\label{dNLS1}
i \partial_{t}u + \Delta u + (1+i \delta_2\partial_{x_2})\big(|u|^{2}u\big) =0, \quad u_{\mid t=0}=u_0(x_1,x_2). \\
\end{equation}
which is obtained from our general model \eqref{PDNLSevol} for $\eps=0$ and $\delta_1=0$. 
We take initial data $u_0$ given by \eqref{ini2}. Here, $Q$ is the ground state computed earlier for this particular choice of parameters, see Fig.~\ref{NLS2dsold1}. 
We work on the computational domain \eqref{domain} with $L_{x_{1}}=L_{x_{2}}=3$, using $N_{x_{1}}=N_{x_{2}}=2^{10}$ Fourier modes and $10^{5}$ time-steps for $0\le t\le 5$. 
We also apply a Krasny filter \cite{krasny}, which sets all Fourier coefficients smaller than $10^{-10}$ equal to zero. 
For $\delta_2=1$ the real and imaginary part of the solution $u$ at the final time $t=5$ can be seen in Fig.~\ref{nls2dsold1+} below. 
Note that they are both much more localized and peaked when compared to the ground state $Q$ shown in Fig.~\ref{NLS2dsold1}, indicating a self-focusing behavior within $u$. 
Moreover, the real part of $u$ is no longer positive due to phase modulations. 
\begin{figure}[htb!]
  \includegraphics[width=0.49\textwidth]{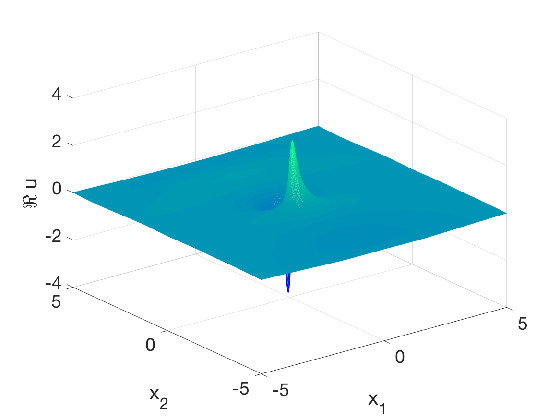}
  \includegraphics[width=0.49\textwidth]{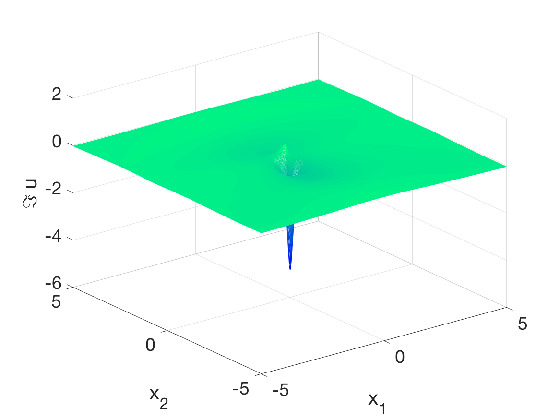}
 \caption{Real and imaginary part of the solution to (\ref{dNLS1}) with  $\delta_2=1$ at time $t=5$ corresponding to 
 $u_{0}=Q+0.1e^{-x_{1}^{2}-x_{2}^{2}}$, where $Q$ is the stationary state in Fig.~\ref{NLS2dsold1}.}
 \label{nls2dsold1+}
\end{figure}
 
Surprisingly, however, there is {\it no} indication of a finite-time blow-up, in contrast to the analogous situation without derivative nonlinearity (recall Fig.~\ref{NLS2dsol+} above). 
Indeed, the Fourier coefficients of $|u|$ at $t=5$ are seen in Fig.~\ref{nls2dsold1+fourier} to
decrease to the order of the Krasny filter. 
In addition, the $L^{\infty}$-norm of the solution, plotted in the middle of the same figure, appears to exhibit a turning point shortly before $t\approx4$. 
Finally, the velocity component $v_{2}$ plotted on the right in Fig.~\ref{nls2dsold1+fourier} seems to slowly converge to a some limiting value $v_2\approx 2$. 
The latter would indicate the appearance of a stable moving soliton, but it is difficult to decide such questions numerically. 
All of these numerical findings are obtained with $M_\eps(t)$ conserved up to errors of the order $10^{-11}$.
\begin{figure}[htb!]
  \includegraphics[width=0.32\textwidth]{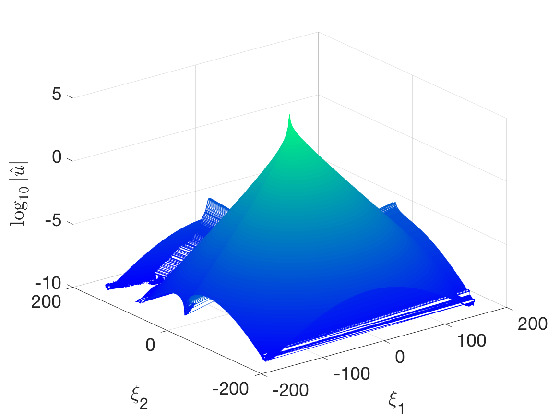}
  \includegraphics[width=0.32\textwidth]{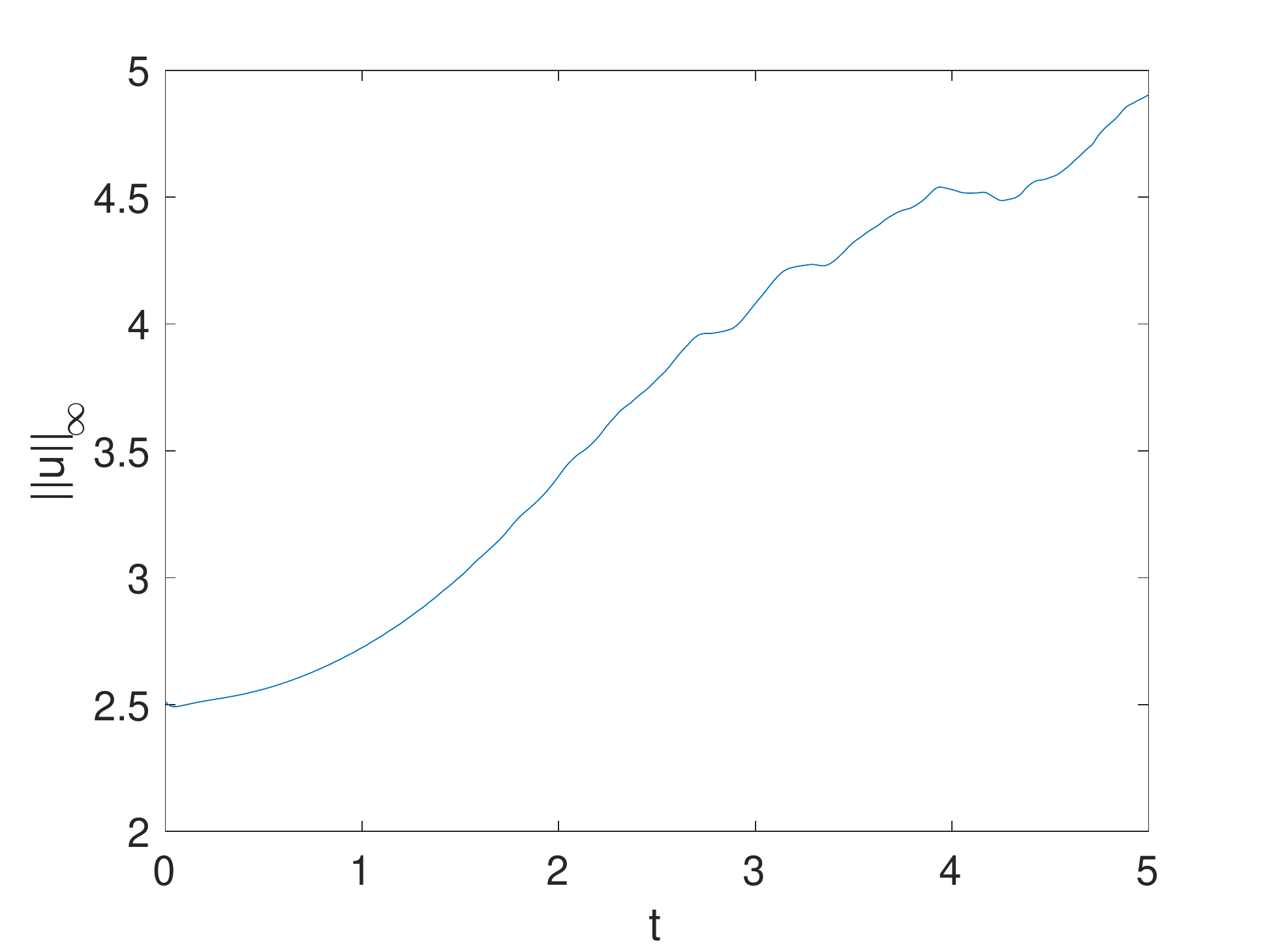}
  \includegraphics[width=0.32\textwidth]{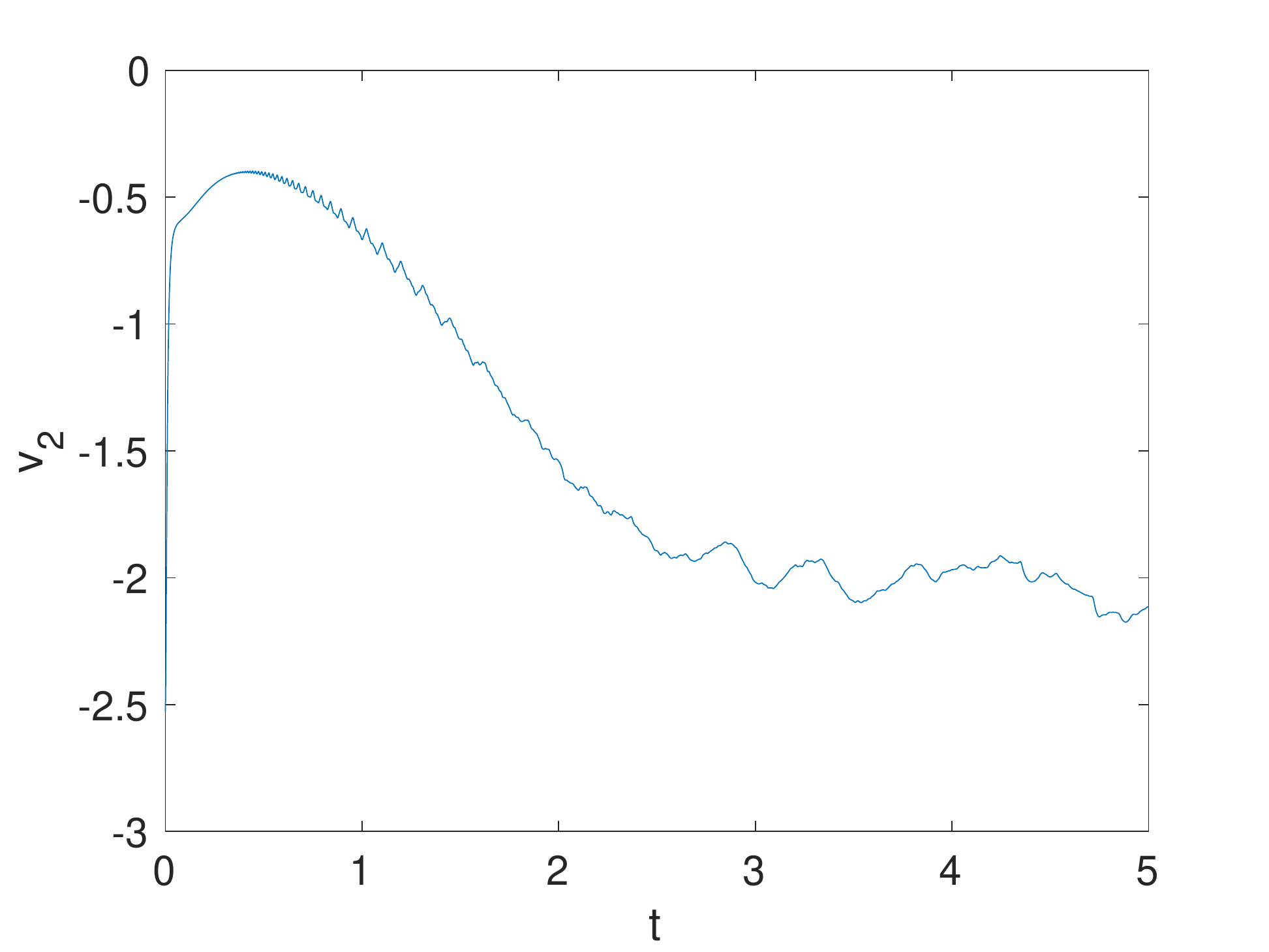}
 \caption{Solution to (\ref{dNLS1}) with $\delta_2=1$ and perturbed stationary state initial data: 
 The Fourier coefficients of $|u|$ at $t=5$ on the left; the $L^{\infty}$-norm of the solution as a function of time 
 in the middle, and the time evolution of its velocity $v_{2}$ on the right.}
 \label{nls2dsold1+fourier}
\end{figure}

It might seem extremely surprising that the addition of a derivative nonlinearity is able to suppress the appearance of finite-time blow-up. Note however, that in 
all the examples above we have used only (a special case of) perturbed ground states $Q$ as initial data. 
For more general initial data, the situation is radically different, as can be illustrated numerically in the following example: 
We solve \eqref{dNLS1} with purely Gaussian initial data of the form
\begin{equation}\label{gaussian}
u_{0}(x_1, x_2)=4e^{-(x_{1}^{2}+x_{2}^{2})}
\end{equation}
on a numerical domain $\Omega$ with $L_{x_{1}}=L_{x_{2}}=2$, using $N_{x_{1}}=N_{x_{2}}=2^{10}$ Fourier coefficients and $N_{t}=10^{5}$ time steps for 
$0\le t\le 0.25$. 
This case appears to exhibit finite-time blow-up, as is illustrated in Fig.~\ref{nls2d4gaussd1}. The conservation of the numerically computed quantity 
$M(t)$ drops below $10^{-3}$ at $t\approx T=0.1955$ which indicates that plotting accuracy is no longer guaranteed. 
Consequently we ignore data taken for later times, but note that the code stops with an overflow error for $t\approx 0.202$. 
\begin{figure}[htb!]
  \includegraphics[width=0.49\textwidth]{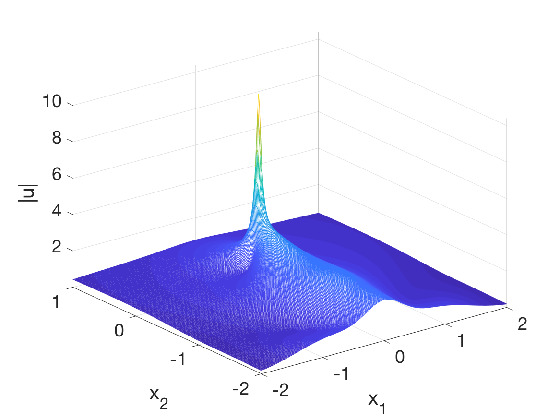}
  \includegraphics[width=0.49\textwidth]{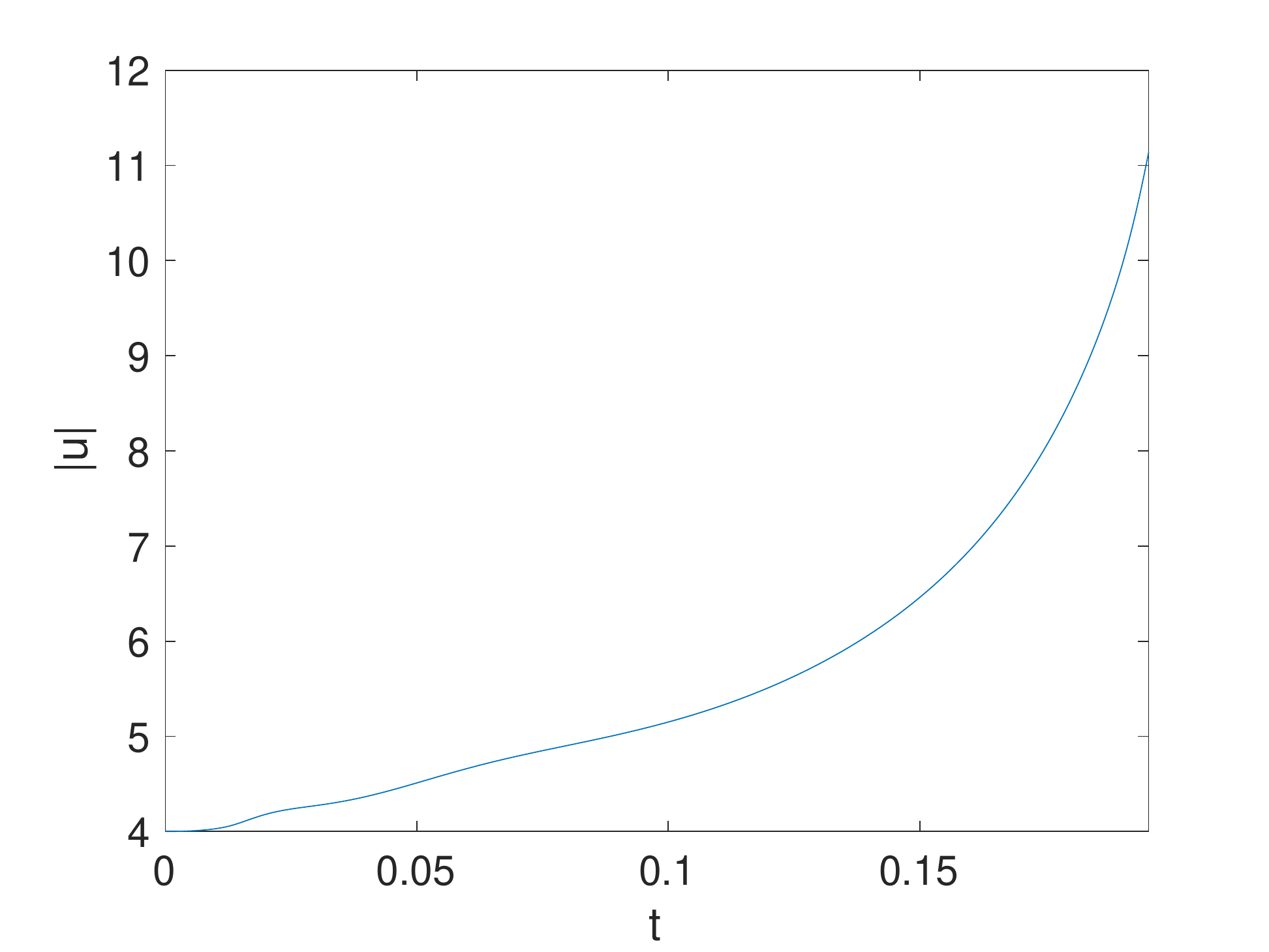}
 \caption{The modulus of the solution to (\ref{dNLS1}) with $\delta_2=1$ for Gaussian initial data $u_0=4e^{-x_{1}^{2}-x_{2}^{2}}$, at time
 $t=0.195$. On the right, the  $L^{\infty}$-norm of the solution as a function of time.}
 \label{nls2d4gaussd1}
\end{figure}

\begin{remark}
These numerical findings are consistent with analytical results for derivative NLS in one spatial dimension. For 
certain values of $\sigma\ge 1$ and certain velosit $v$, the corresponding solitary wave solutions
are found to be orbitally stable, see \cite{CO, GNW, LSS1}. However, for general initial data and $\sigma>1$ large enough, one expects finite-time blow-up, see \cite{LSS2}.
\end{remark}


\section{Global well-posedness with full off-axis variation}\label{sec:full} 

In this section we will analyze the Cauchy problem corresponding to \eqref{PDNLSevol} in the case of full off-axis dependence, i.e. $k=2$, so that
\begin{equation*}\label{Pfull}
P_\eps = 1-\eps^2 \Delta.
\end{equation*}
In this context, we expect the solution $u$ of \eqref{PDNLSevol} to be very well behaved due to the strong regularizing 
effect of the elliptic operator $P_\eps$ acting in both spatial directions. 

To prove a global-in-time existence result, we rewrite \eqref{PDNLSevol}, using Duhamel's formula,
\begin{equation}\label{intrepu}
u(t) = S_\eps(t) u_0 + i \int_{0}^{t}S_{\eps}(t-s)P_{\eps}^{-1}(1+i\mbox{\boldmath  $\delta$}\cdot\nabla)\big(|u|^{2\sigma}u\big)(s)\;ds\equiv \Phi(u)(t).
\end{equation}
Here, and in the following, we denote by $$S_{\eps}(t) = e^{itP_{\eps}^{-1}\Delta}$$ the corresponding linear propagator, which is easily seen (via Plancherel's theorem) to be 
an isometry on $H^s$ for any $s\in \R$. 
It is known that in the case with full off-axis variation, $S_\eps(t)$ does not allow for any Strichartz estimates, see \cite{Car}. 
However, the action of $P_\eps^{-1}$ allows us to ``gain" two derivatives and offset the action of the gradient term in the nonlinearity of \eqref{intrepu}.
Using a fixed point argument, we can therefore prove the following result. 

\begin{theorem}[Full off-axis variations]\label{thm:fullHs} 
Let $\eps>0$, $k=2$ and $\sigma>\frac{1}{4}$. Then for any $\mbox{\boldmath $\delta$} \in \R^{2}$ and any $u_{0} \in H^{1}(\R_{x}^2)$, there exists a 
unique global-in-time solution $u \in C(\R_{t};H^{1}(\R_{x}^2))$ to \eqref{PDNLSevol}, depending continuously on the initial data. Moreover, 
\[
\| u(t, \cdot) \|_{H_{x}^1}\le C(\eps,\|u_{0}\|_{H_{x}^{1}}), \quad \forall \, t\in \R.
\]
\end{theorem}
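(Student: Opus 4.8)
The plan is to prove local well-posedness in $H^{1}(\R^{2}_{x})$ by a contraction-mapping argument applied to the Duhamel map $\Phi$ in \eqref{intrepu}, and then to promote the local solution to a global one by means of the conserved quantity $M_\eps$ from \eqref{CLM}. The decisive structural fact is that for $k=2$ the operator $P_\eps=1-\eps^2\Delta$ is uniformly elliptic, so $P_\eps^{-1}$ smooths by two derivatives. This gain exactly offsets the single derivative carried by the self-steepening term $i\bd\cdot\nabla(|u|^{2\sigma}u)$ and dispenses with the need for any dispersive (Strichartz) estimate, which is in any case unavailable for $S_\eps(t)$.

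For the local theory I would work in the ball $B_R=\{u\in C([-T,T];H^{1}_{x}):\ \sup_{|t|\le T}\|u\|_{H^{1}_{x}}\le R\}$ with $R=2\|u_0\|_{H^{1}_{x}}$. Since $S_\eps(t)$ is an isometry on $H^1_x$, it suffices to bound the map $g\mapsto P_\eps^{-1}(1+i\bd\cdot\nabla)g$ from $L^2_x$ into $H^1_x$. Its Fourier symbol is
\[
\frac{1-\bd\cdot\xi}{1+\eps^2|\xi|^2},
\]
which decays like $|\xi|^{-1}$ as $|\xi|\to\infty$; the operator therefore gains one derivative and is bounded $L^2_x\to H^1_x$, with norm controlled by $\eps$ and $|\bd|$. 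It then remains only to control $g=|u|^{2\sigma}u$ in $L^2_x$. By the two-dimensional Sobolev embedding $H^1_x\hookrightarrow L^p_x$ (all $p<\infty$) one gets $\||u|^{2\sigma}u\|_{L^2_x}=\|u\|_{L^{2(2\sigma+1)}_x}^{2\sigma+1}\le C\|u\|_{H^1_x}^{2\sigma+1}$, together with the Lipschitz counterpart
\[
\big\||u|^{2\sigma}u-|v|^{2\sigma}v\big\|_{L^2_x}\le C\big(\|u\|_{H^1_x}^{2\sigma}+\|v\|_{H^1_x}^{2\sigma}\big)\|u-v\|_{H^1_x}
\]
obtained from the pointwise bound $\big||z_1|^{2\sigma}z_1-|z_2|^{2\sigma}z_2\big|\le C(|z_1|^{2\sigma}+|z_2|^{2\sigma})|z_1-z_2|$ and Hölder's inequality; these are the nonlinear estimates that must close in the stated range of $\sigma$. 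Inserting them into $\Phi$ yields $\|\Phi(u)(t)\|_{H^1_x}\le\|u_0\|_{H^1_x}+C(\eps,\bd)\,|t|\,R^{2\sigma+1}$ and an analogous difference bound, so $\Phi$ maps $B_R$ into itself and is a contraction once $T=T(\|u_0\|_{H^1_x})>0$ is small. The Banach fixed point theorem then gives a unique local solution, and continuous dependence on $u_0$ follows from the same contraction estimates.

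To globalize, I would use that for $k=2$ the conserved mass is $M_\eps(t)=\|u\|_{L^2_x}^2+\eps^2\|\nabla u\|_{L^2_x}^2$, equivalent to $\|u(t)\|_{H^1_x}^2$. Conservation of $M_\eps$ thus yields the a priori bound $\|u(t)\|_{H^1_x}\le C(\eps,\|u_0\|_{H^1_x})$ claimed in the theorem. Since the local existence time $T$ above depends only on the $H^1$-norm of the data, and this norm stays bounded by a fixed constant for all times, I can re-apply the local result on consecutive intervals of uniform length $T$ and extend the solution to all $t\in\R$.

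The main technical obstacle is the \emph{rigorous} justification of the conservation law \eqref{CLM} at the $H^1$ level of regularity. Formally $\tfrac{d}{dt}M_\eps=2\re\langle P_\eps\partial_t u,u\rangle$, and all contributions cancel: the Laplacian and the leading nonlinear term give purely imaginary pairings, while the self-steepening term vanishes because, after an integration by parts,
\[
\re\!\int |u|^{2\sigma}u\,\bd\cdot\nabla\bar u=\tfrac12\!\int|u|^{2\sigma}\bd\cdot\nabla|u|^2=\tfrac{1}{2(\sigma+1)}\!\int\bd\cdot\nabla\big(|u|^{2\sigma+2}\big)=0.
\]
For a solution that is only $C(\R;H^1)$ with $\partial_t u\in C(\R;H^{-1})$, and with a possibly non-smooth power nonlinearity, these manipulations require a regularization/approximation argument (mollify the data, derive the identity for smooth solutions, and pass to the limit using continuity of the flow). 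Once this identity is secured, the remaining steps are routine.
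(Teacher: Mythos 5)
Your local theory and your globalization scheme coincide with the paper's: the same Duhamel map \eqref{intrepu}, the same observation that the multiplier $P_\eps^{-1}(1+i\bd\cdot\nabla)$ maps $L^2_x$ into $H^1_x$ (the paper bounds its norm by $\eps^{-2}(1+|\bd|)$), the same pointwise Lipschitz bound on $g(u)=|u|^{2\sigma}u$ combined with two-dimensional Sobolev embeddings, and the same iteration of the local result using conservation of $M_\eps$, which for $k=2$ is equivalent to the squared $H^1$ norm. One cosmetic remark: the paper makes the role of $\sigma>\tfrac14$ explicit through the embeddings $H^{1}(\R^{2}) \subset H^{(4\sigma-1)/(4\sigma)}(\R^{2})\hookrightarrow L^{8\sigma}(\R^{2})$ and $H^{1/2}(\R^2)\hookrightarrow L^4(\R^2)$; your Lipschitz estimate implicitly needs exactly this (the $L^{8\sigma}$ norm requires $8\sigma\ge 2$), so you should state it rather than say the estimates ``must close''.

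The genuine divergence --- and the genuine gap --- is the justification of the conservation law \eqref{CLM}, which is where the paper spends roughly half of its proof. You correctly isolate the key algebraic cancellation $\re\int|u|^{2\sigma}u\,\bd\cdot\nabla\bar u=\tfrac{1}{2(\sigma+1)}\int\bd\cdot\nabla\big(|u|^{2\sigma+2}\big)=0$, but you then defer the rigorous derivation to a mollify-and-pass-to-the-limit argument that you do not carry out. The paper deliberately avoids that route: it adapts an argument of Ozawa \cite{Oz}, expanding $\|S_\eps(-t)P_\eps^{1/2}u(t)\|_{L^2_x}^2$ directly from the Duhamel formula and showing that the cross terms $\mathcal{I}_1+\mathcal{I}_2$ cancel, precisely so that no approximation by smooth solutions is needed. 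Your deferred step is not routine here: for non-integer $\sigma\in(\tfrac14,\tfrac12)$ the map $z\mapsto|z|^{2\sigma}z$ is only $C^1$, so producing smooth (say $H^2$) solutions, proving persistence of regularity with uniform lifespans, and passing to the limit is delicate and is nowhere addressed in your sketch. If you want to avoid Ozawa's trick, there is a cleaner way to close the gap that exploits the structure you already identified: for $k=2$ the operator $P_\eps^{-1}\Delta$ is a \emph{bounded} Fourier multiplier on $H^1_x$, and $u\mapsto P_\eps^{-1}(1+i\bd\cdot\nabla)g(u)$ is locally Lipschitz from $H^1_x$ to $H^1_x$, so \eqref{PDNLSevol} is an ODE in the Banach space $H^1_x$; its solution automatically lies in $C^1([0,T];H^1_x)$, hence $P_\eps\partial_t u\in C([0,T];H^{-1}_x)$, and the identity $\tfrac{d}{dt}M_\eps=2\re\langle P_\eps\partial_t u,u\rangle_{H^{-1},H^1}$ together with your cancellation is then rigorous with no regularization at all.
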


\begin{proof}
Let $T, M>0$. We aim to show that $u \mapsto \Phi(u)$ is a contraction on the ball
$$
X_{T,M} = \{ u \in  L^{\infty}([0,T);H^{1}(\R_{x}^{2})) : 
\|u \|_{L_{t}^{\infty}H_{x}^{1}} \le M \}.
$$
To this end, let us shortly denote 
\begin{equation}\label{intrepu1}
\Phi(u)(t) = S_{\eps}(t)u_{0} +\mathcal{N}(u)(t),
\end{equation}
where for $g(u) = |u|^{2\sigma}u$, we write
$$
\mathcal{N}(u)(t) := i\int_{0}^{t}S_{\eps}(t-s)P_{\eps}^{-1}(1+i\mbox{\boldmath  $\delta$}\cdot\nabla)g(u(s))\;ds.
$$

Now, let $u, u'\in  X_{T,M}$. Using Minkowski's inequality and recalling that $S_\eps(t)$ is an isometry on $H^1(\R^2)$ yields
\begin{align*}
\big\|\big(\mathcal{N}(u)(t) - \mathcal{N}(u')(t)\big)\big\|_{H_{x}^{1}}
\le \eps^{-2}(1+ |\bd|)\int_{0}^{t} \|g(u)-g(u')\|_{L_{x}^{2}}(s)\;ds.  
\end{align*}
To bound the integrand, we first note that
\begin{equation}\label{g1}
|g(u)-g(u')| \le C_{\sigma}(|u|^{2\sigma} + |u'|^{2\sigma})|u-u'|.
\end{equation}
If we impose $\sigma > \frac{1}{4} $, then we have by Sobolev's embedding that
$$  H^{1}(\R^{2}) \subset H^{\frac{(4\sigma-1)}{4\sigma}}(\R^{2}) \hookrightarrow L^{8\sigma}(\R^{2}) \quad \text{and}  \quad  H^{\frac{1}{2}}(\R^{2}) \hookrightarrow L^{4}(\R^{2}).$$ 
This allows us to estimate further after using \eqref{g1} and H\"older's inequality in space to give
\begin{align*}
\|g(u)-g(u')\|_{L_{x}^{2}} \le &\, \big(\| u \|^{2\sigma}_{L_{x}^{8\sigma}} + \| u' \|^{2\sigma}_{L_{x}^{8\sigma}} \big)\| u-u' \|_{L_{x}^{4}}\\
\le &\, \Big(\| u \|^{2\sigma}_{H_{x}^{1}} + \| u' \|^{2\sigma}_{H_{x}^{1}} \Big)\| u-u' \|_{H_{x}^{1}}.
\end{align*}
Together with H\"older's inequality in $t$, we can consequently bound
\begin{equation*}
\big\|\mathcal{N}(u) - \mathcal{N}(u')\big\|_{L_{t}^{\infty}H_{x}^{1}}\le 2\eps^{-2}( 1 + |\bd|)TM^{2\sigma}\| u-u' \|_{L_{t}^{\infty}H_{x}^{1}}.
\end{equation*}
By choosing $T>0$ sufficiently small, Banach's fixed point theorem directly yields a unique
local-in-time solution $u \in C([0,T],H^{1}(\R_{x}^{2}))$. Standard arguments (see, e.g., \cite{Paz}) 
then allow us to extend this solution up to a maximal time of existence $T_{\rm max}=T_{\rm max}(\| u_0\|_{H_{x}^1})>0$ and we also infer continuous dependence on the initial data.

Next, we shall prove that
\begin{equation}\label{conser}
\|P_{\eps}^{1/2}u(t)\|_{L_{x}^{2}} = \|P_{\eps}^{1/2}u_{0}\|_{L_{x}^{2}}, \quad \text{for all $t \in [0,T]$ and $T< T_{\rm max}$.}
\end{equation}
For $\eps>0$, this conservation law yields a uniform bound on the $H^{1}$-norm of $u$, since 
\[
c_\eps \|P_{\eps}^{1/2}\varphi \|_{L_{x}^{2}}\le \| \varphi \|_{H_{x}^1} \le C_\eps \|P_{\eps}^{1/2}\varphi \|_{L_{x}^{2}}, \quad C_\eps, c_\eps >0.
\]
We consequently can re-apply the fixed point argument 
as many times as we wish, thereby preserving the length of the maximal interval in each iteration, 
to yield $T_{\rm max} = +\infty$. Since the equation is time-reversible modulo complex conjugation, we obtain a global $H^{1}$-solution for all $t\in \R$, provided \eqref{conser} holds.

To prove \eqref{conser}, we adapt and (slightly) modify an elegant argument given in \cite{Oz}, which has the advantage 
that it does not require an approximation procedure via a sequence of sufficiently smooth solutions (as is classically done, see e.g. \cite{Cz}): 
Let $t \in [0,T]$ for $T < T_{\max}$. We first rewrite Duhamel's formula \eqref{intrepu}, using the continuity of the semigroup $S_\eps$ to propagate backwards in time
\begin{equation}\label{eq:N}
S_\eps(-t)u(t) =  u_{0} + S_\eps(-t)\mathcal{N}(u)(t).
\end{equation}
As  $S_{\eps}(\cdot)$ is unitary in $L^{2}$, we have $\|P^{1/2}_{\eps}u(t)\|_{L_{x}^{2}} = \|S_\eps(-t)P^{1/2}_{\eps}u(t)\|_{L_{x}^{2}}$. The latter can be expressed using the above identity:
\begin{align*}
&\|P^{1/2}_{\eps} \, u(t)\|_{L_{x}^{2}} = \\
& \ = \|P^{1/2}_{\eps}u_{0}\|_{L_{x}^{2}}
+2\re \, \big \langle S_\eps(-t)P_{\eps}^{1/2}\mathcal{N}(u)(t),P_{\eps}^{1/2}u_{0} \big\rangle_{L_{x}^{2}} + \| S_\eps(-t)P_{\eps}^{1/2}\mathcal{N}(u)(t) \|^2_{L_{x}^{2}}\\
&  \ \equiv \|P_{\eps}^{1/2}u_{0}\|_{L_{x}^{2}}  + \mathcal{I}_{1} + \mathcal{I}_{2}. 
\end{align*}
We want to show that $\mathcal{I}_{1} + \mathcal{I}_{2}=0$.
In view of \eqref{intrepu1} we can rewrite
\begin{align*}
\mathcal{I}_{1} &= -2\im \, \big\langle \int_{0}^{t}S_{\eps}(-s)P_{\eps}^{-1/2}(1+i\mbox{\boldmath  $\delta$}\cdot\nabla)g(u)(s)\;ds , P_{\eps}^{1/2}u_0  \big\rangle_{L_{x}^2} \;ds\\
&=  -2\im  \int_{0}^{t} \big\langle  (1+i\mbox{\boldmath  $\delta$}\cdot\nabla)g(u)(s) , S_{\eps}(s)u_0  \big\rangle_{L_{x}^2} \;ds.
\end{align*}
By the Cauchy-Schwarz inequality we find that this quantity is indeed finite, since 
\begin{align*}
|\mathcal{I}_{1}| \le  2T\| (1+i\mbox{\boldmath  $\delta$}\cdot\nabla)g(u)\|_{L_{t}^{\infty}L_{x}^{2}}\| S_{\eps}(\cdot)u_0 \|_{L_{t}^{\infty}L_{x}^{2}} <\infty.
\end{align*}
Denoting for simplicity $G_{\eps}(\cdot) = P_{\eps}^{-1}(1+i\mbox{\boldmath  $\delta$}\cdot\nabla)g(u)(\cdot)$, we find after a lengthy computation (see \cite{AAS} for more details) that the integral
\begin{align*}
\mathcal{I}_{2}& = 2 \re \int_{0}^{t} \big\langle  P_{\eps}G_{\eps}(s) , -i\mathcal{N}(u)(s) \big\rangle_{L_{x}^{2}}\;ds.
\end{align*}
We can express $-i\mathcal{N}(u)(s)$ using the integral formulation \eqref{eq:N} and write
\begin{equation}\label{yeta}
\mathcal{I}_{2} = 2 \re \, \Big( \int_{0}^{t} \big\langle  P_{\eps}G_{\eps}(s) , iS_{\eps}(s)u_0 \big\rangle_{L_{x}^{2}}\;ds + \int_{0}^{t} {\big\langle  P_{\eps}G_{\eps}(s) , -iu(s) \big\rangle_{L_{x}^{2}}}\;ds \Big).
\end{equation}
Next, we note that the particular form of our nonlinearity implies
\begin{align*}
 \re\,\big\langle P_{\eps}G_{\eps},  -iu \big\rangle_{L^{2}} 
 & = \im \, \big\langle (1+i\mbox{\boldmath  $\delta$}\cdot\nabla)g(u),  u \big\rangle_{L_{x}^{2}} \\
&= \im \, \|u\|^{2\sigma +2}_{L_{x}^{2\sigma +2}} - \re \, \big\langle g(u),  (\mbox{\boldmath  $\delta$}\cdot\nabla)u \big\rangle_{L_{x}^{2}} .  
\end{align*}
Here, the first expression in the last line is obviously zero, whereas for the second term we compute
\begin{align*}
\re \, \big\langle g(u),  (\mbox{\boldmath  $\delta$}\cdot\nabla)u \big\rangle_{L_{x}^{2}} = &\, \int_{\R^{2}}|u|^{2\sigma}\re\big(u(\mbox{\boldmath  $\delta$}\cdot\nabla)\overline{u}\big) \;dx \\
= &\, \frac{1}{2(\sigma+1)}\int_{\R^{2}}(\mbox{\boldmath  $\delta$}\cdot\nabla)(|u|^{2\sigma+2} )\;dx =0,
\end{align*}
for $H^1$-solutions $u$. In summary, the second term on the right-hand side of \eqref{yeta} simply vanishes and we find
\begin{align*}
\mathcal{I}_{2} = 2\im  \int_{0}^{t} \big\langle  (1+i\mbox{\boldmath  $\delta$}\cdot\nabla)g(u)(s) , S_{\eps}(s)u_0  \big\rangle_{L_{x}^2} \;ds = -\mathcal{I}_{1}.
\end{align*}
This finishes the proof of \eqref{conser}.
\end{proof}


\section{(In-)stability properties of stationary states with full off-axis variation} \label{sec:numfull}

In this section, we shall perform numerical simulations to study the orbital stability or instability properties of 
the (zero speed) solitary wave $Qe^{it}$ in the case with self-steepening $|\bd |\not =0$ and full off-axis variation $k=2$. 
In view of Theorem \ref{thm:fullHs}, we know that there cannot be 
any strong instability, i.e., instability due to finite-time blow-up. Nevertheless, we shall see that 
there is a wealth of possible scenarios, depending on the precise choice of parameters, 
$\sigma$, $\mbox{\boldmath $\delta$}$, and on the way we perturb the initial data. 

To be more precise, we shall consider initial data to equation \eqref{PDNLSevol} with $k=2$, given by
\begin{equation}
    u_0(x_{1},x_{2})=Q(x_{1},x_{2})\pm 0.1 e^{-x_{1}^{2}-x_{2}^{2}}
    \label{pert},
\end{equation}
where $Q$ is again the stationary state constructed numerically as described in Section \ref{sec:ground}. 
We will use $N_{x_{1}}=N_{x_{2}}=2^{10}$ Fourier modes, a numerical domain $\Omega$ of the form \eqref{domain} with $L_{x_{1}}=L_{x_{2}}=3$, and a time step of 
$\Delta t = 10^{-2}$. 

Recall that in a stable regime, the time-dependent solution $u$ typically oscillates around some time-periodic state plus a (small) remainder which radiates away as $t \to \pm \infty$ (see, e.g., \cite[Section 4.5.1]{Sul} for more details). In our simulations, however, we work on 
$\mathbb{T}^{2}$ instead of $\mathbb{R}^{2}$ which implies that 
radiation cannot escape to infinity. Thus, we will not be able to numerically verify the precise behavior of $u$ for large times. 
Having this in mind, 
we take it as numerical evidence for (orbital) stability, if both perturbations \eqref{pert} of $Q$ generate 
stable oscillations of $\| u(t, \cdot)\|_{L^\infty}$, see also  
\cite{KS, KMS} for similar studies.


\subsection{The case without self-steepening} Let us first address the case $\delta_{1}=\delta_{2}=0$ for nonlinear strengths $\sigma=1,2,3$.

For $\sigma=1$, we find that the 
perturbed ground state is unstable, and that the initial pulse disperses towards infinity as can be seen in Fig.~\ref{ex1ey1p1+}. 
The modulus of the solution at $t=10$ in the same figure on the right shows that the initial pulse disperses with an annular profile. 
A $``-"$ perturbation in (\ref{pert}) leads to the same qualitative behavior and a corresponding figure is omitted. 
\begin{figure}[htb!]
  \includegraphics[width=0.49\textwidth]{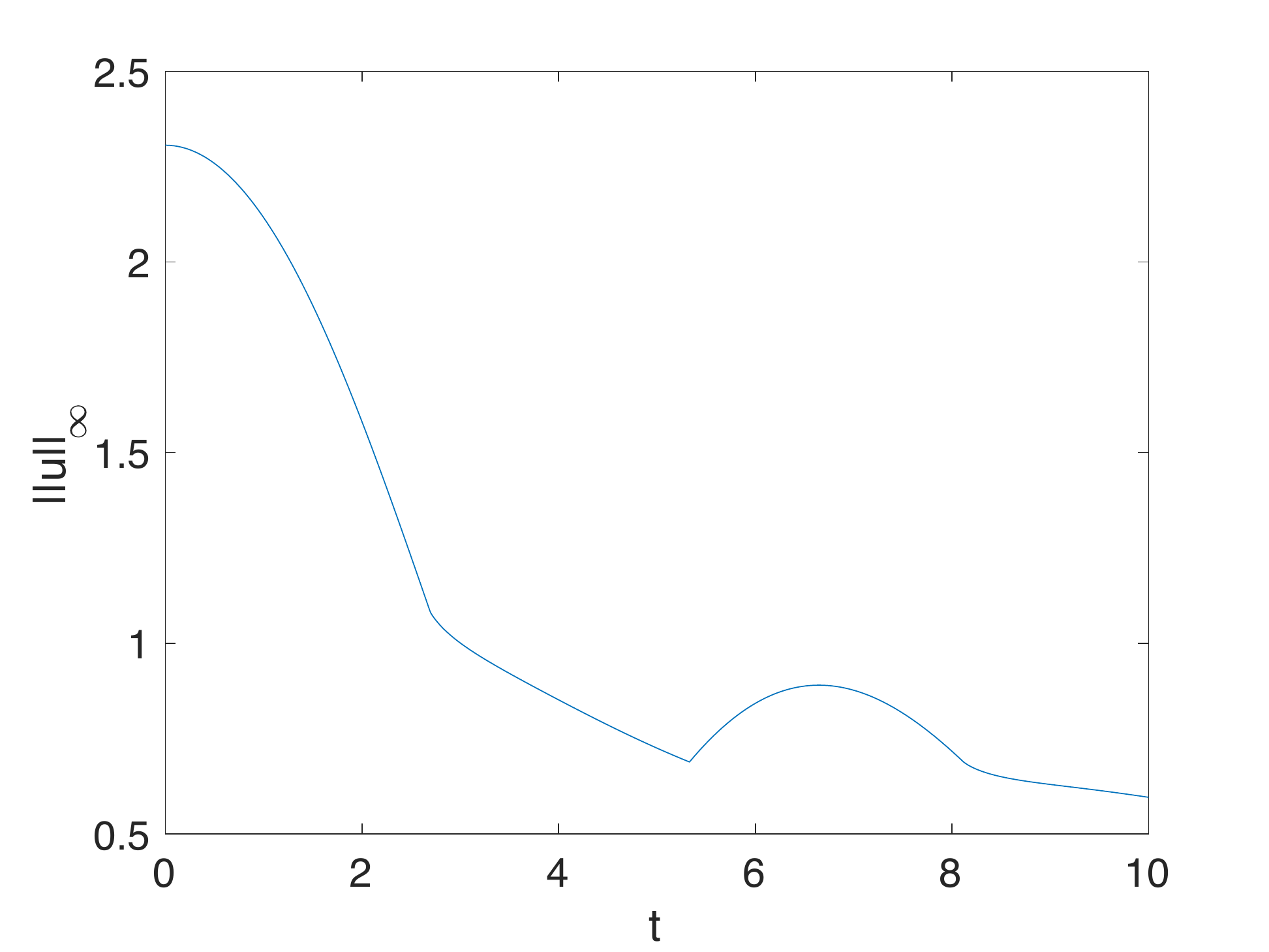}
  \includegraphics[width=0.49\textwidth]{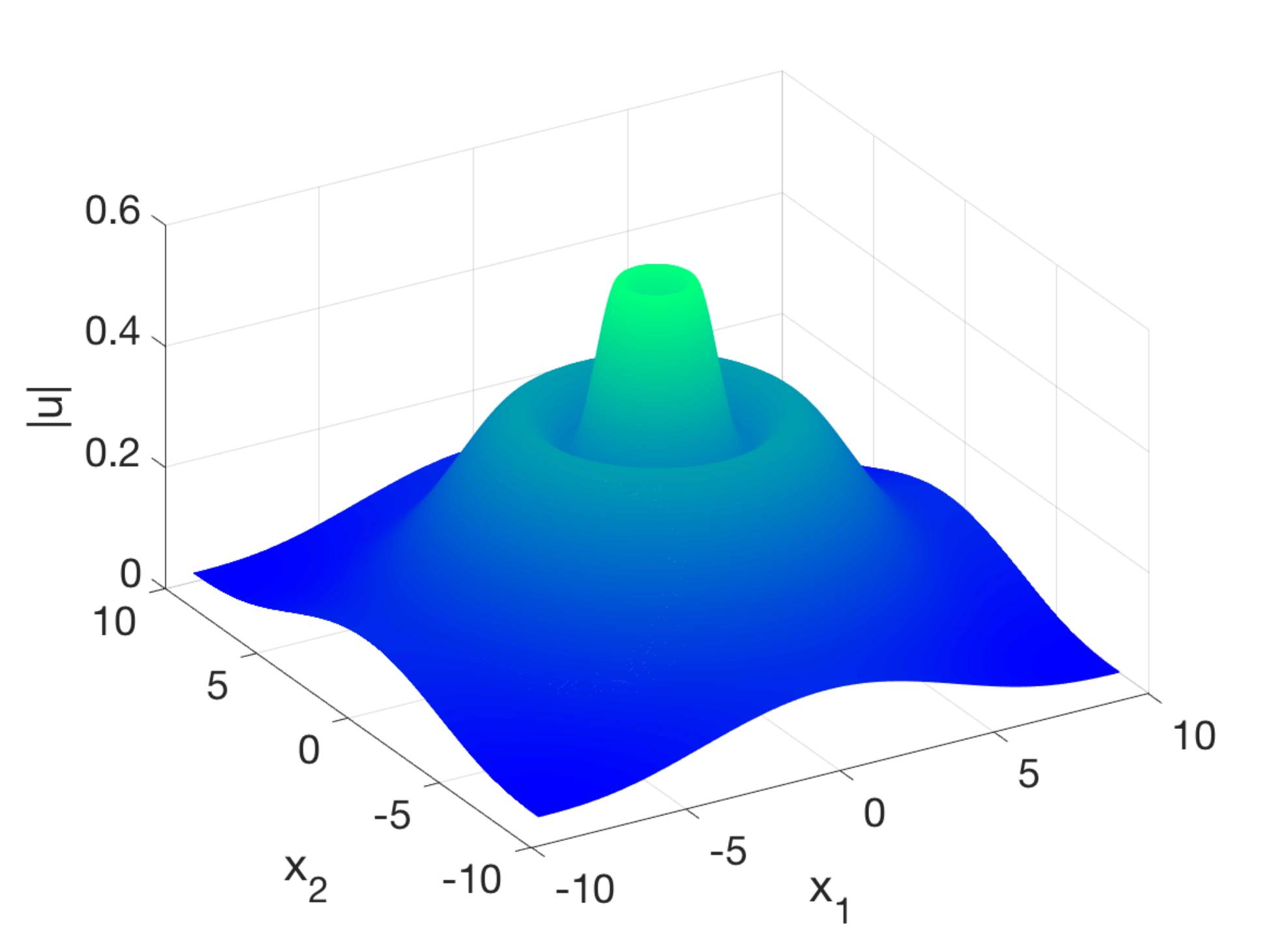}
 \caption{Solution  to equation (\ref{PDNLSevol}) with 
 $\sigma=1$, $\eps=1$, $k=2$, $\mbox{\boldmath $\delta$}=0$, and initial data \eqref{pert} with  the $``+"$ sign: On the left the $L^{\infty}$-norm of the solution as a function of $t$, and  
 on the right the modulus of the solution for $t=10$. }
 \label{ex1ey1p1+}
\end{figure}

The situation is found to be different for $\sigma =2$, where $Q$ appears to be {stable}, see Fig.~\ref{ex1ey1p2}. 
The $L^{\infty}$-norm of the solution thereby oscillates for both signs of the perturbation. 
\begin{figure}[htb!]
  \includegraphics[width=0.49\textwidth]{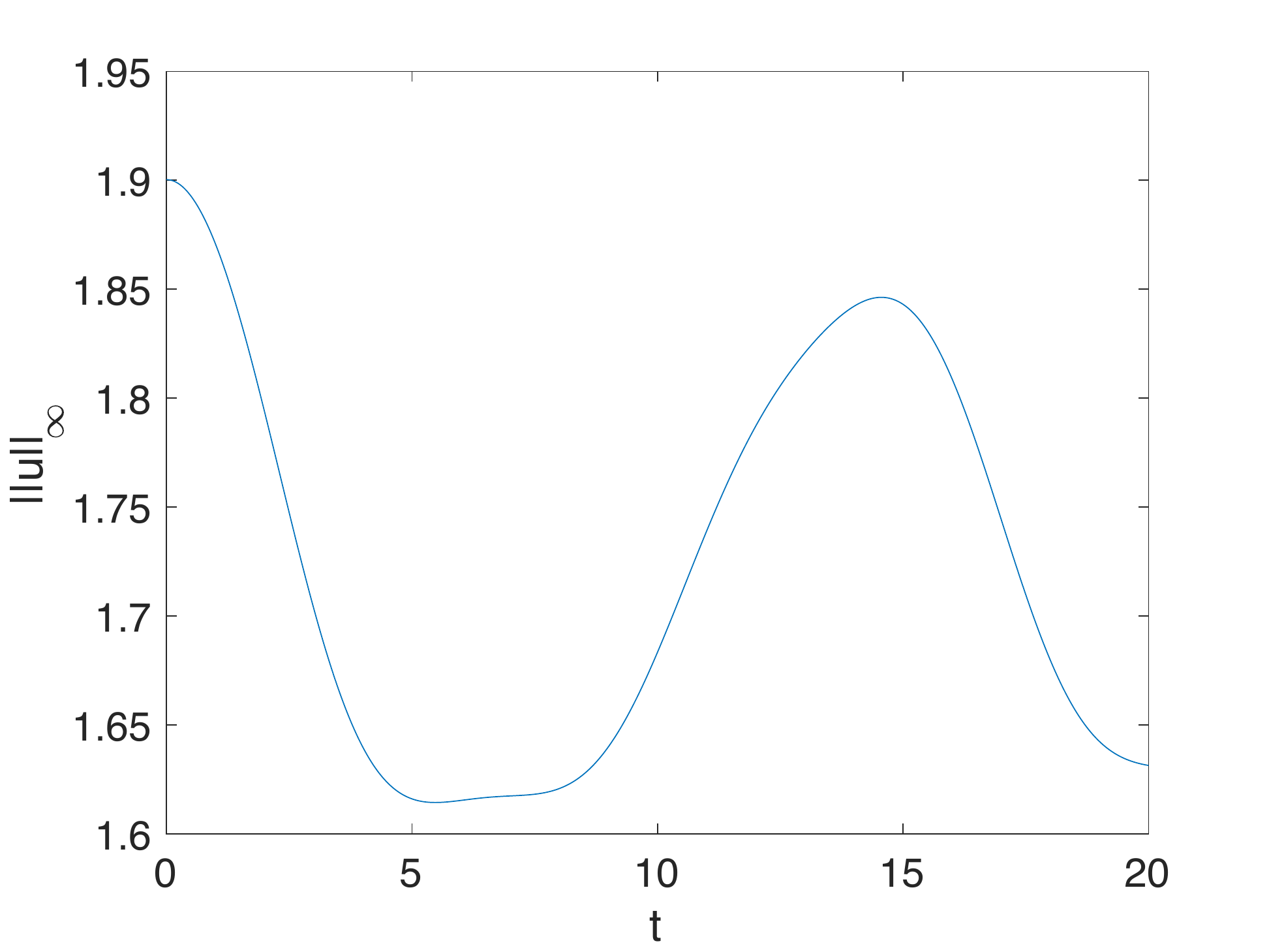}
  \includegraphics[width=0.49\textwidth]{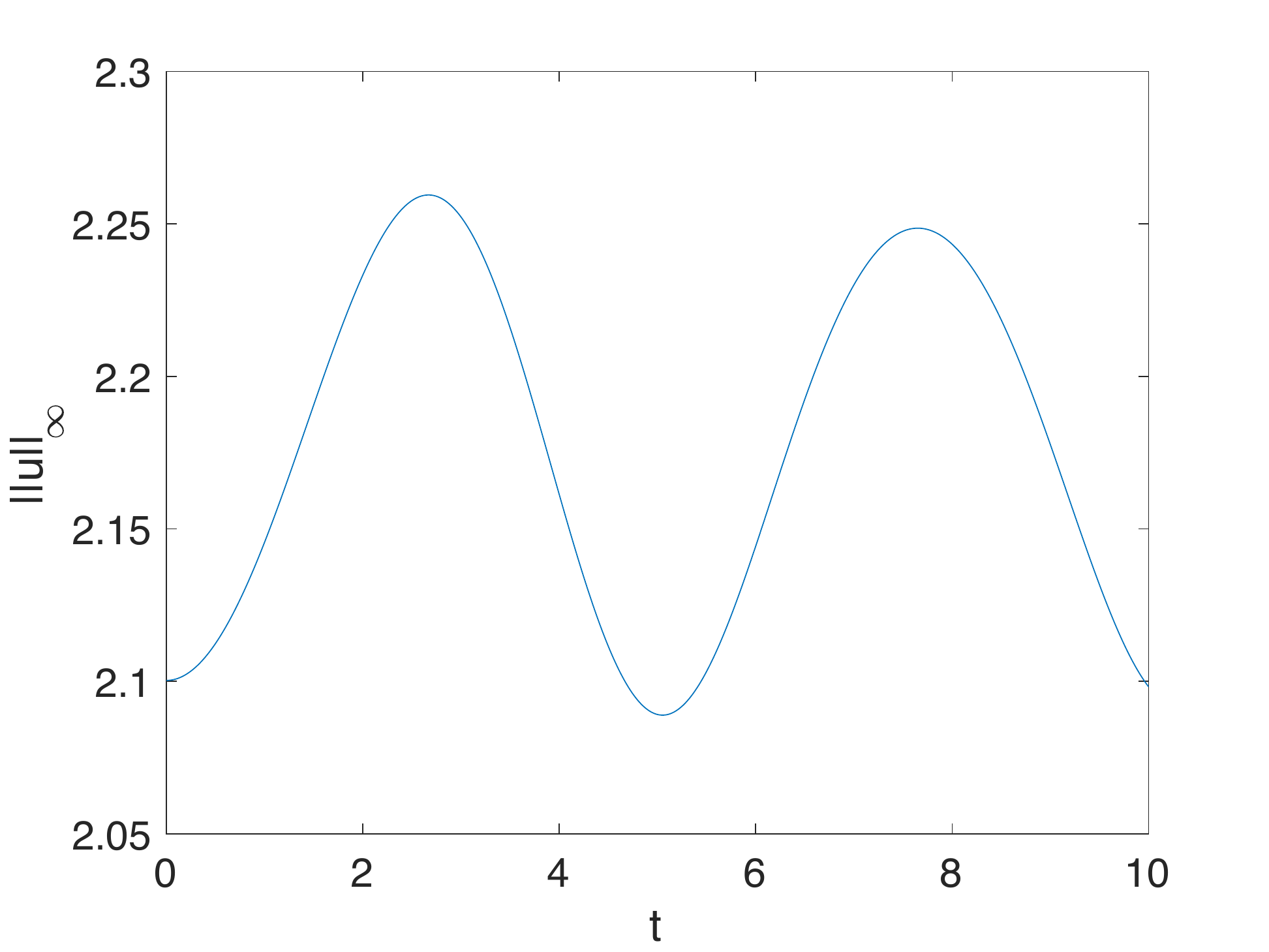}
 \caption{$L^{\infty}$-norm of the solution to equation (\ref{PDNLSevol}) with 
 $\sigma=2$, $\eps=1$, $k=2$, $\mbox{\boldmath $\delta$}=0$, and initial data \eqref{pert}: On 
 the left for 
 the $``-"$ sign, and on the right for the $``+"$ sign.}
 \label{ex1ey1p2}
\end{figure}

Finally, for $\sigma=3$ we find that the behavior depends on how we perturb the initial ground state $Q$. 
Perturbations with a $``+"$ sign in \eqref{pert} again exhibit an oscillatory behavior of the $L^{\infty}$-norm, see the right of Fig.~\ref{ex1ey1p3}. 
However, a $``-"$  perturbation yields a monotonically decreasing $L^{\infty}$-norm of the solution. The latter is again dispersed
with an annular profile. 
\begin{figure}[htb!]
  \includegraphics[width=0.49\textwidth]{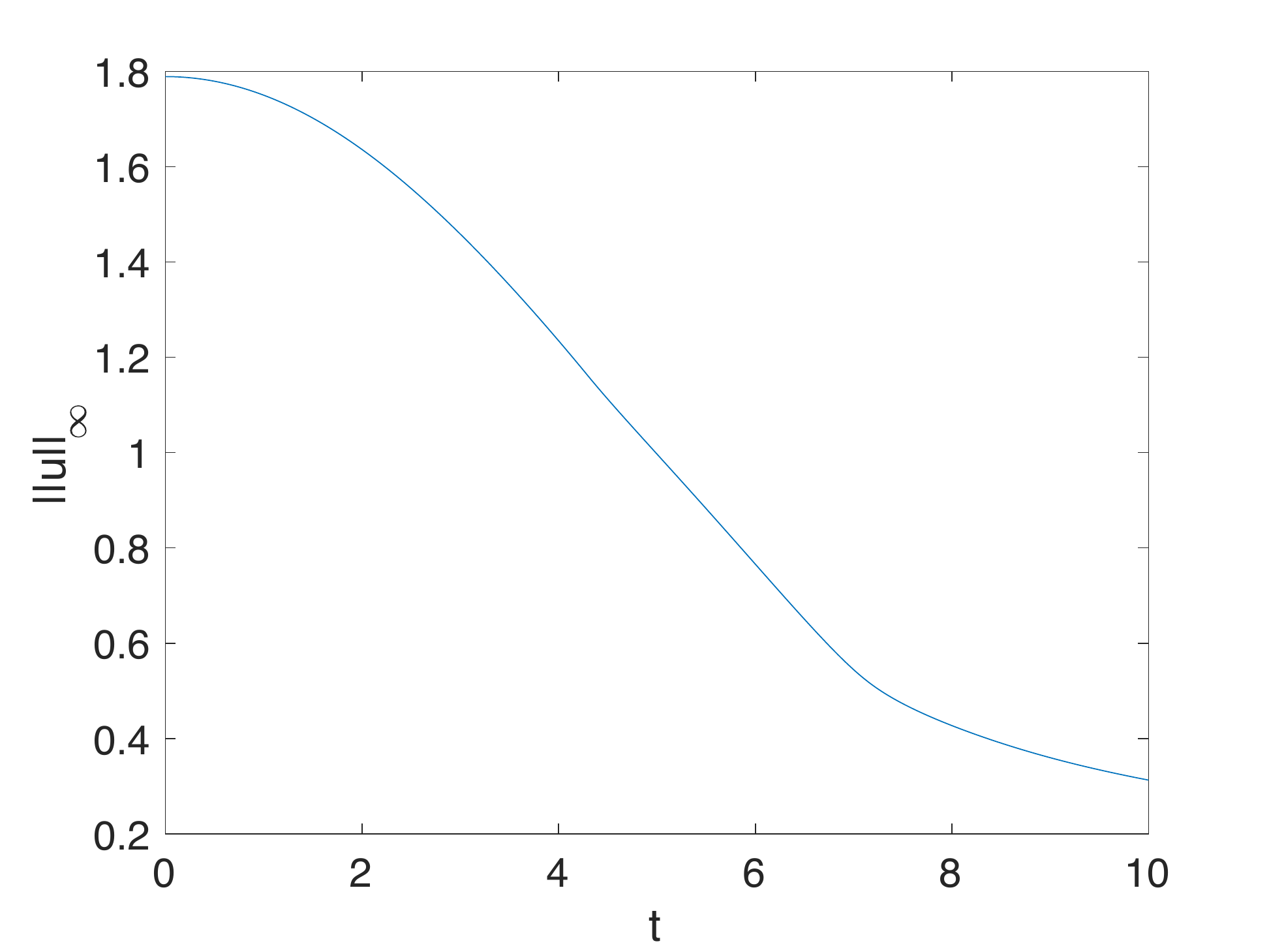}
  \includegraphics[width=0.49\textwidth]{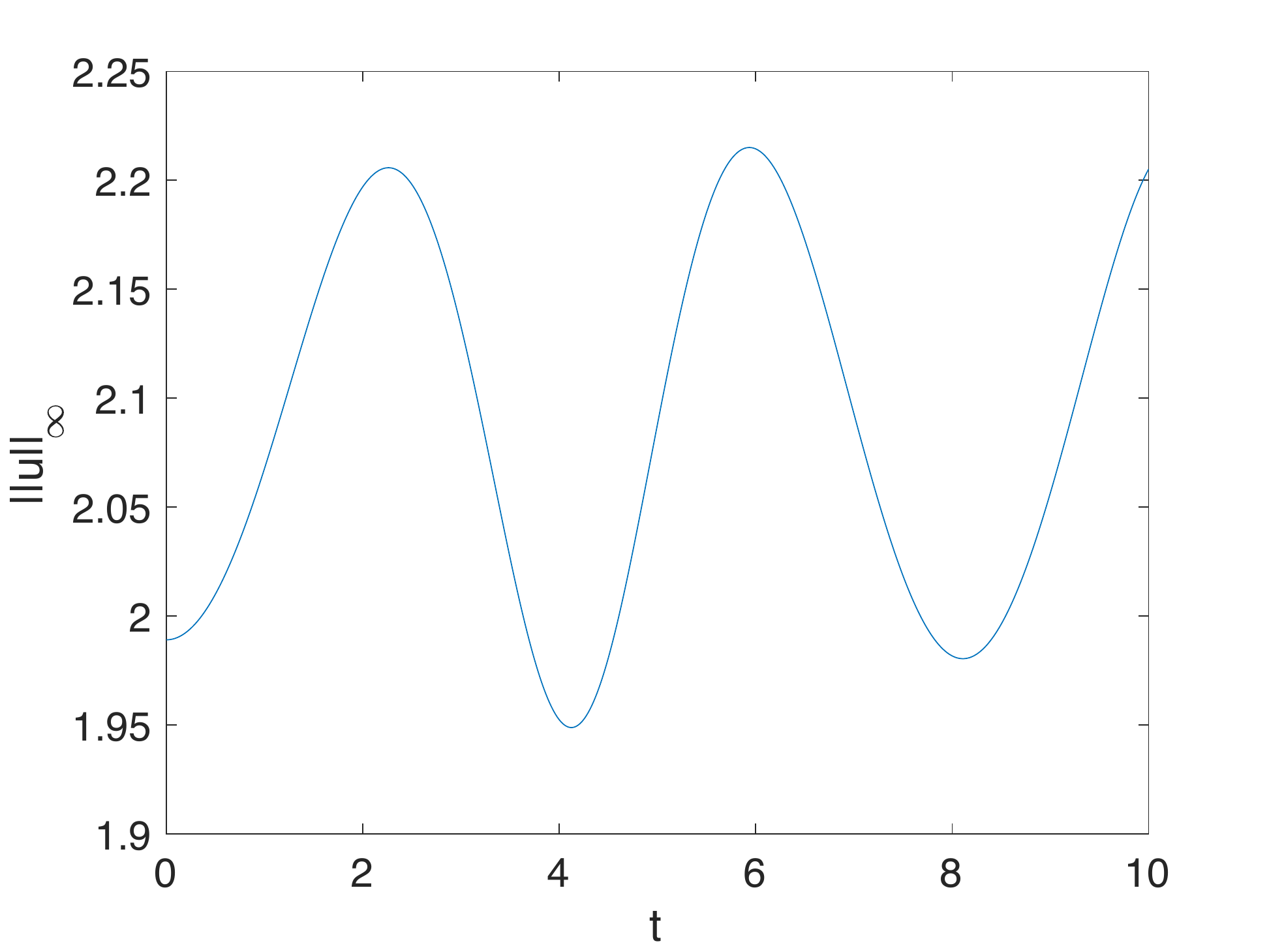}
 \caption{$L^{\infty}$-norm of the solution to equation (\ref{PDNLSevol}) with $\eps =1$, $k=2$, $\sigma=3$, $\mbox{\boldmath $\delta$}=0$ and initial data \eqref{pert}: On 
 the left for  the $``-"$ sign, on the right for the $``+"$ sign. }
 \label{ex1ey1p3}
\end{figure}


\subsection{The case with self-steepening} In this subsection, we shall perform the same numerical study in the case with self-steepening, i.e. $\delta_{1,2}\not =0$. 
For $\sigma =1$, the corresponding stationary state $Q$ seems to remain stable, since 
both types of perturbations yield an oscillatory behavior of the $L^\infty$-norm in time, see Fig.~\ref{ex1ey1d1}. This is  
in sharp contrast to the $\sigma=1$ case without self-steepening depicted in Fig.~ \ref{ex1ey1p1+} above. In addition, we see that 
the solution no longer displays an annular profile.
\begin{figure}[htb!]
  \includegraphics[width=0.32\textwidth]{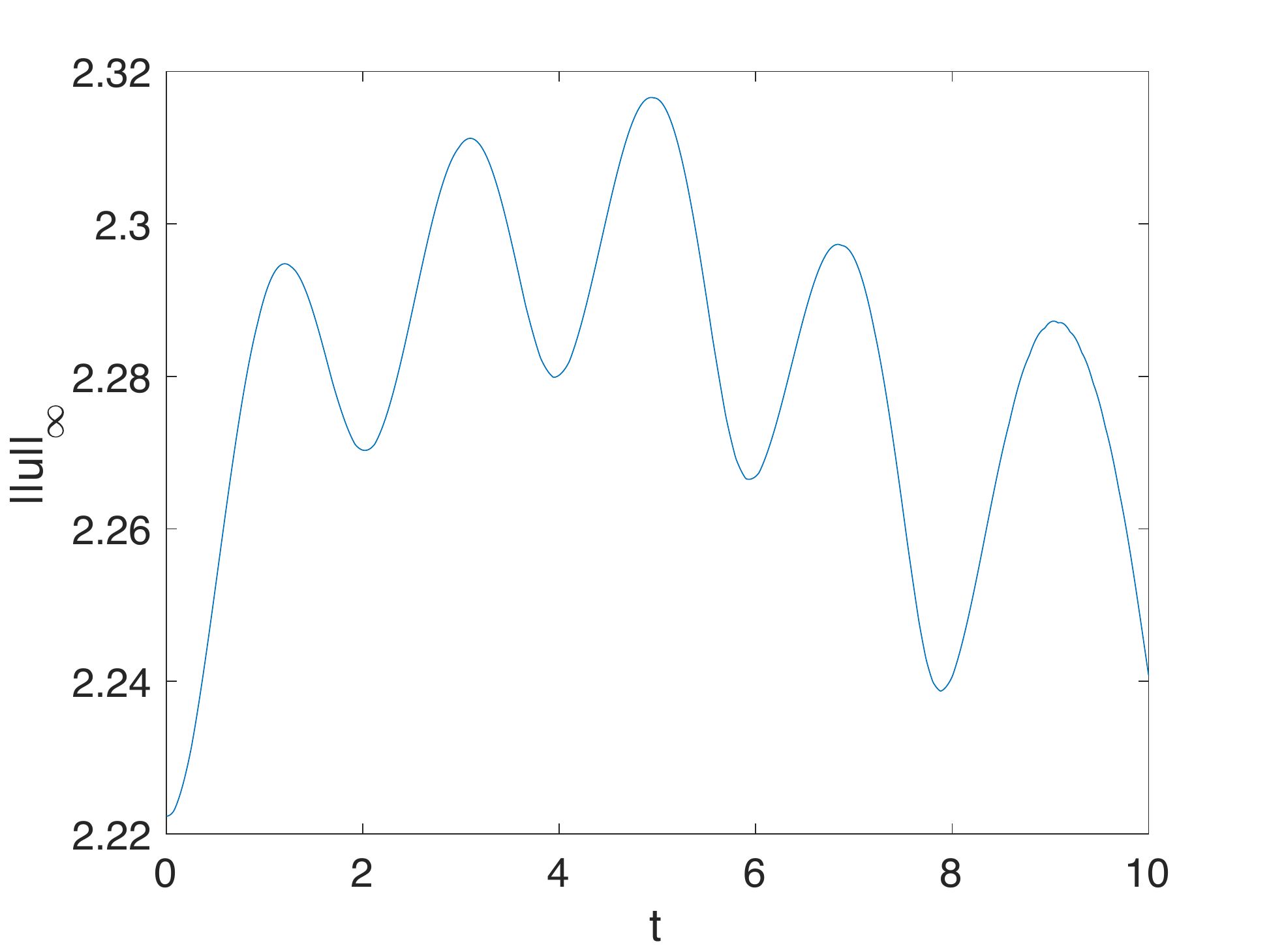}
  \includegraphics[width=0.32\textwidth]{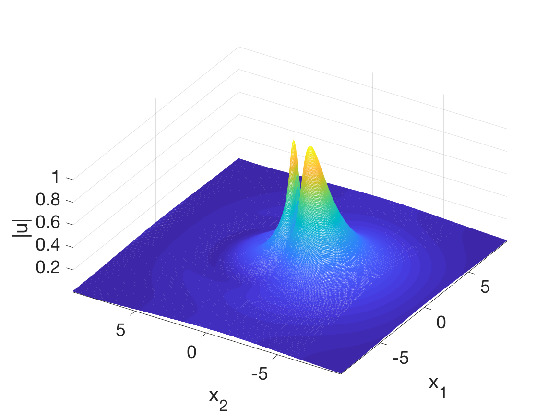}
  \includegraphics[width=0.32\textwidth]{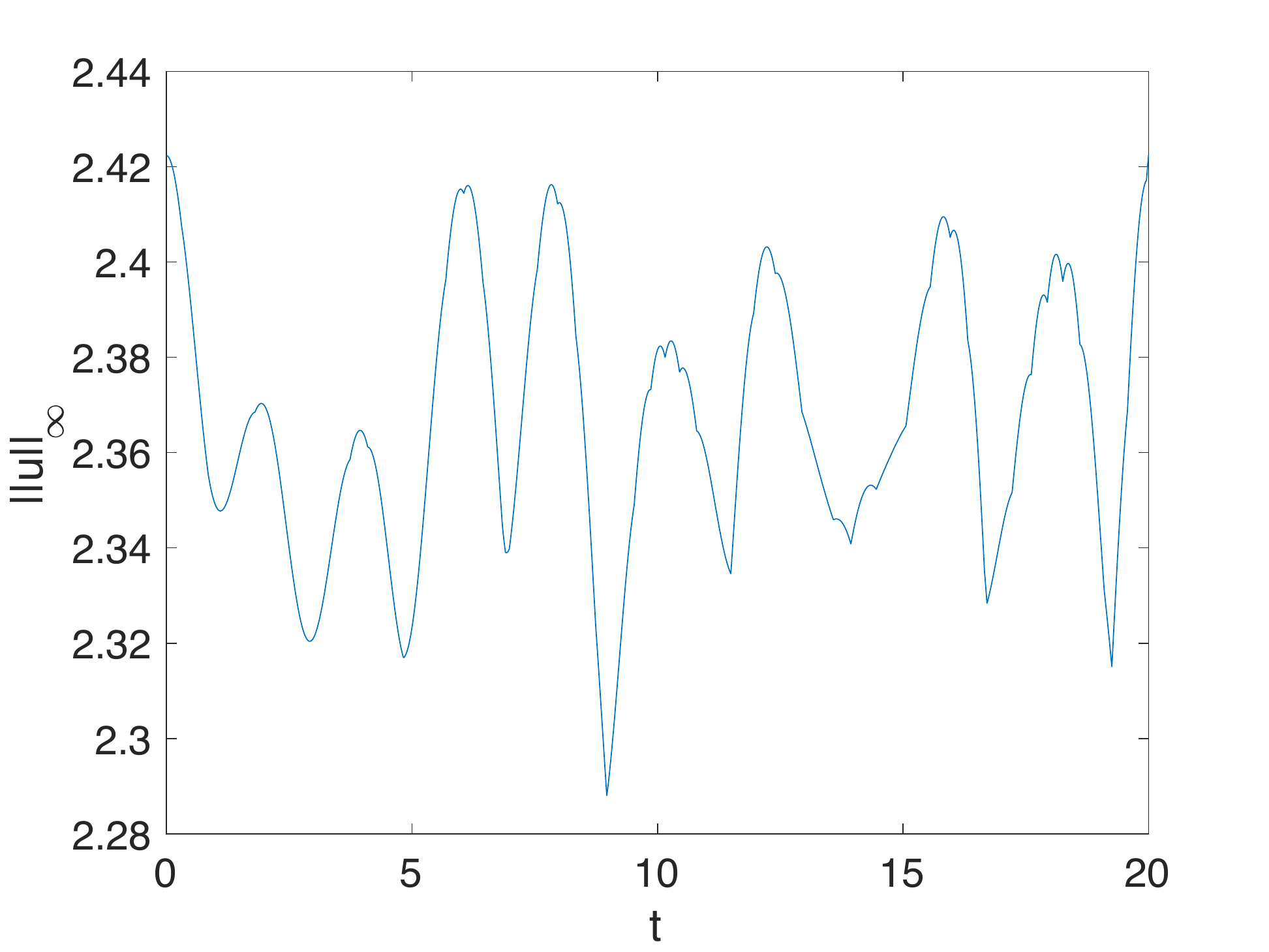}
 \caption{ Solution  to equation \eqref{PDNLSevol} with $\eps=1$, $k=2$, $\sigma=1$, $\delta_{1}=0$, $\delta_{2}=1$, and initial data \eqref{pert}: On 
 the left the $L^{\infty}$-norm for the $``-"$ sign, in the middle $|u|$ plotted at the final time, and on the right the 
 $L^{\infty}$-norm for the solution with the $``+"$ sign. }
 \label{ex1ey1d1}
\end{figure}

This stable behavior is lost in the case of higher nonlinearities. More precisely, for both $\sigma = 2$ and $3$ 
we find that the behavior of the solution $u$ depends on the sign of the considered Gaussian perturbation. On the one hand, for the $``+"$ perturbation in \eqref{pert}, 
both $\sigma =2$ and $\sigma=3$ yield an oscillatory behavior of the $L^\infty$-norm, see Fig.~\ref{ex1ey1p3d01}. 
On the other hand, the $``-"$ perturbation for both nonlinearities produce a solution with decreasing $L^{\infty}$-norm in time (although 
for $\sigma=2$ this decrease is no longer monotonically). 
\begin{figure}[htb!]
\includegraphics[width=0.32\textwidth]{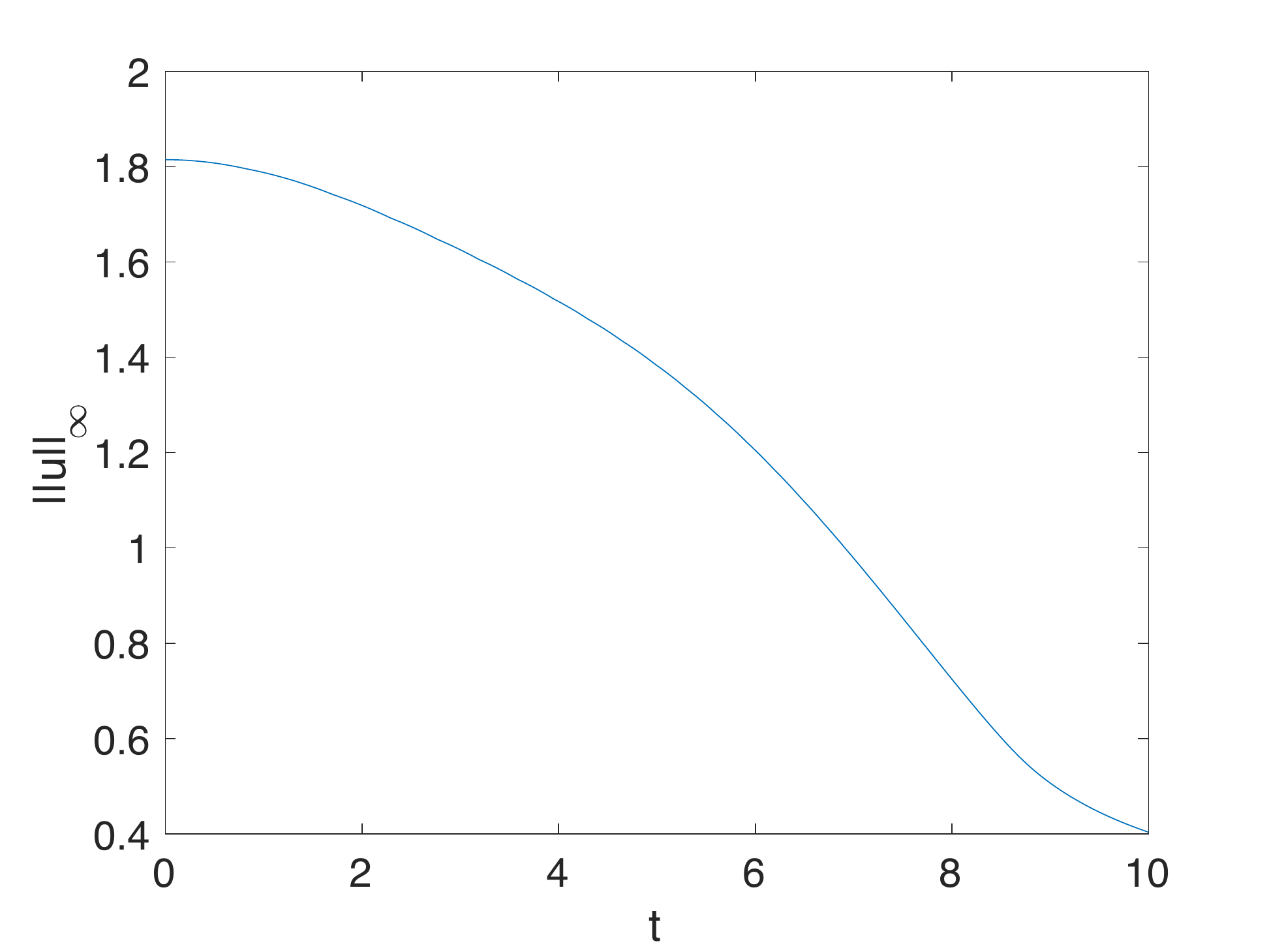}
\includegraphics[width=0.32\textwidth]{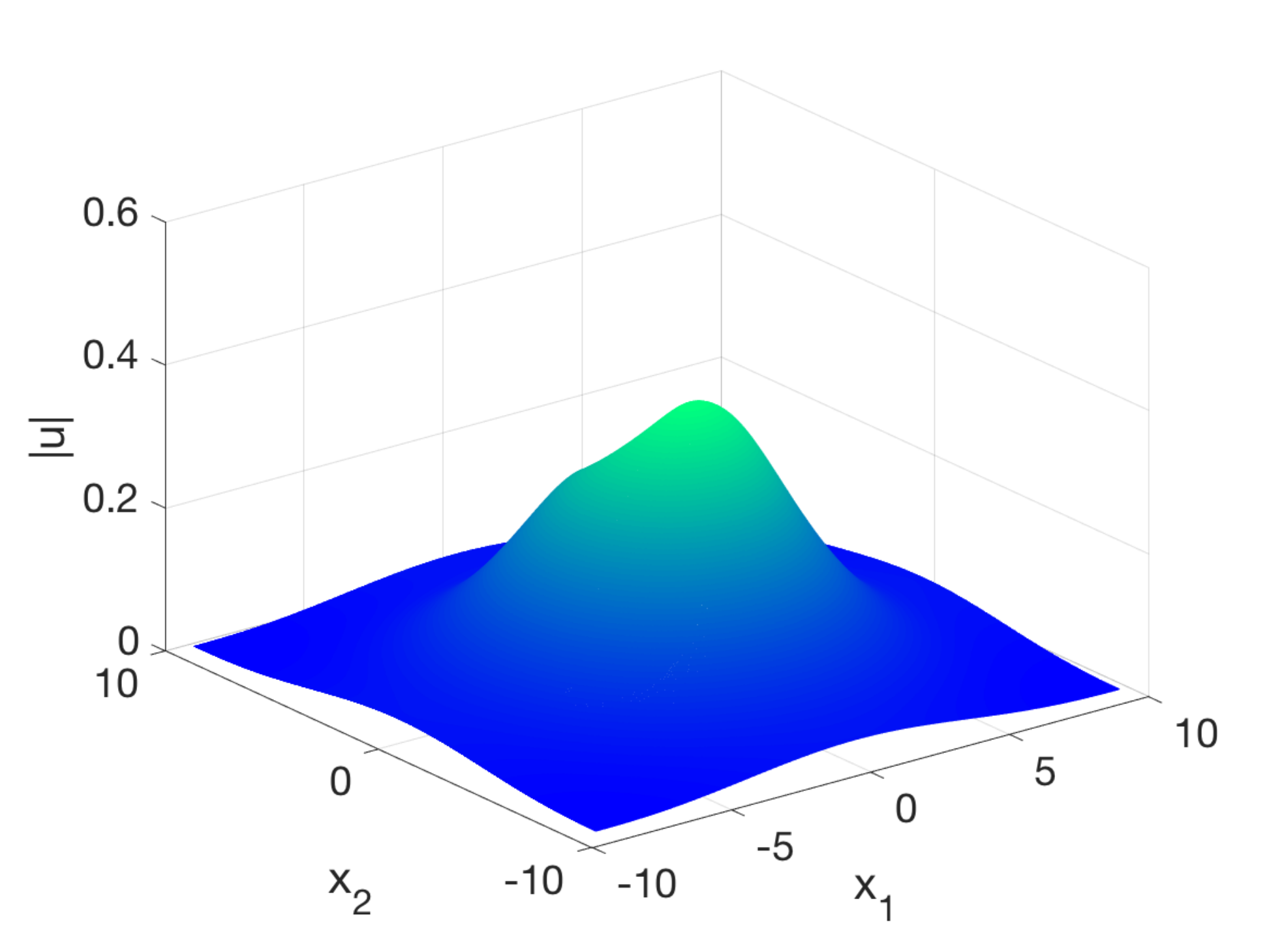}
  \includegraphics[width=0.32\textwidth]{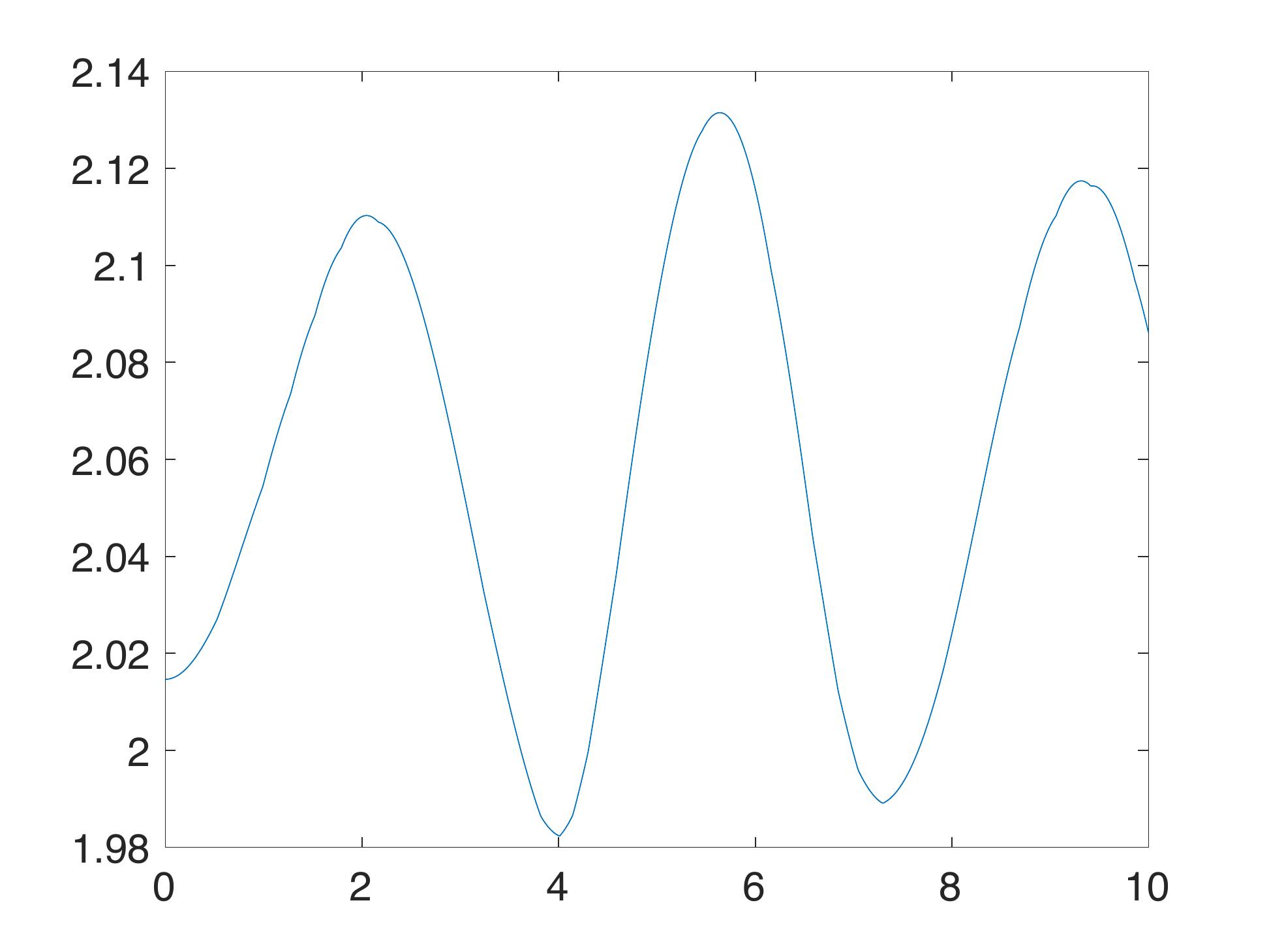}
 \caption{Solution to equation (\ref{PDNLSevol}) with $\eps=1$, $k=2$, $\sigma=3$, $\delta_{1}=0$, $\delta_{2}=0.1$, and initial data \eqref{pert}: 
 On the left, the $L^{\infty}$-norm for the $``-"$ perturbation, in the middle $|u|$ plotted at the final time, and on the right the 
 $L^{\infty}$-norm for the solution with the $``+"$ sign. }
 \label{ex1ey1p3d01}
\end{figure}

\begin{remark} Our numerical findings are reminiscent of recent results for the (generalized) BBM equation, see \cite{BMR}. In there, it is found that 
for $p\ge 5$, the regime where the underlying KdV equation is expected to exhibit blow-up, 
solitary waves can be both stable and unstable and are sensitive to the type of perturbation considered. 
The main difference to our case is of course  
that these earlier studies are done in only one spatial dimension. 
\end{remark}


\section{Well-posedness results for the case with partial off-axis variation}\label{sec:anpartial}

From a mathematical point of view, the most interesting situation arises in the case where there is only a {\it partial} off-axis variation. To study such a situation, we shall without loss of 
generality assume that $P_\eps$ acts only in the $x_{1}-$direction, i.e. 
\[
P_\eps = 1-\eps^2 \partial_{x_1}^2.
\]
In this case \eqref{PDNLS} becomes
\begin{equation}\label{ppDNLS}
i (1-\eps^2 \partial_{x_{1}}^2) \partial_{t}u + \Delta u +(1+i \mbox{\boldmath $\delta$} \cdot \nabla )(|u|^{2\sigma}u) =0, \quad u_{\mid t=0}=u_0(x_1,x_2).
\end{equation}
When $\bd =(\delta,0)^\top$ and $\sigma=1$, this is precisely the model proposed in \cite[Section 4.3]{DLS}. Motivated by this, we shall in our analysis 
only consider the case where the regularization $P_\eps$ and the derivative nonlinearity act in the same direction. 
Numerically, however, we shall also treat the orthogonal case where, instead, $\bd =(0,\delta)^\top$, see below.

\subsection{Change of unknown and Strichartz estimates}   
In \cite{AAS}, which treats the case without self-steepening, the following change of unknown is proposed in order to streamline the analysis:
\begin{equation}\label{eq:trans}
v(t,{x_{1},x_{2}}):= P_{\eps}^{1/2}u(t, {x_{1},x_{2}}).
\end{equation}
Rewriting the evolutionary form of \eqref{ppDNLS} with $\bd =(\delta,0)^\top$ in terms of $v$ yields
\begin{equation}\label{vPDNLS}
i \partial_{t}v + P_{\eps}^{-1}\Delta v +(1+i \delta\partial_{x_1} )P_{\eps}^{-1/2}(|P_{\eps}^{-1/2}v|^{2\sigma}P_{\eps}^{-1/2}v)  =0, 
\end{equation}
subject to initial data $$v_{\mid t=0}= v_{0}(x_1, x_2)\equiv P_{\eps}^{1/2}u_0(x_1, x_2).$$ Instead of \eqref{CLM}, one finds the new conservation law
\begin{equation}\label{CM}
\| v(t, \cdot) \|^{2}_{L_{x}^{2}} =\| P^{1/2}_{\eps}u(t, \cdot), \|^2_{L_{x}^2} =\| P^{1/2}_{\eps}u_{0}\|^2_{L_{x}^2} = \| v_{0} \|^{2}_{L_{x}^{2}},
\end{equation}
where we recall that $P_\eps^{1/2}$ only acts in the $x_1$-direction, via its Fourier symbol 
\[
\widehat P_1^{s/2}(\xi) =  (1+ \xi_1^2)^{s/2}, \quad \xi_1 \in \R.
\] 
This suggests to work in the mixed Sobolev-type spaces $L^{p}(\R_{x_2}; H^{s}(\R_{x_1}))$, which for any $s\in\R$ are defined through the following norm:
\begin{equation*}
\|f\|_{L^{p}_{x_2} H^{s}_{x_1}}:=\big \|P_1^{s/2}f\big \|_{L^{p}_{x_2}L^{2}_{x_1}}:=\left(\int_{\R}\left(\int_{\R}|P_1^{s/2}f(x_1, x_2)|^2\;dx_1\right)^{\frac{p}{2}}\;dx_2 \right)^{\frac1p}.
\end{equation*}
We will also make use of the mixed space-time spaces $L^q_tL^p_{x_2}H^{s}_{x_1}(I)$ for some time interval $I$ (or simply $L^q_tL^p_{x_2}H^{s}_{x_1}$ when the interval is clear from context), which we shall equip with the norm
\begin{equation*}
\|F\|_{L^q_tL^p_{x_2}H^{s}_{x_1}(I)}:=\left(\int_{I}\|F(t)\|_{L^p_{x_2}H^s_{x_1}}^q\,dt\right)^{\frac1q}.
\end{equation*}

The proof of (global) existence of solutions to \eqref{vPDNLS} will require us to use the dispersive properties of the associated linear propagator 
$S_{\eps}(t) = e^{it P_\eps^{-1} \Delta}$, which in contrast to the case $k=2$ allows for Strichartz estimates. 
However, in comparison to the usual Schr\"odinger group $e^{it\Delta}$,  these dispersive properties are considerably weaker.

In the following, we say that a pair $(q, r)$ is Strichartz {\it admissible}, 
if
\begin{equation}\label{adm}
\frac2q = \frac12-\frac1r , \quad \text{for} \ 2\le r\le\infty, 4 \le q\le\infty.
\end{equation}
Now, let $(q, r), (\gamma, \rho)$ be two arbitrary admissible pairs. It is proved in \cite[Proposition 3.4]{AAS} that 
there exist constants $C_1, C_2>0$ independent of $\eps$, such that
\begin{equation}\label{S1}
\| S_\eps(\cdot)f \|_{L^{q}_{t}L^{r}_{x_2}H^{-\frac{2}{\gamma}}_{x_1}} \le C_1\| f \|_{L_{x}^{2}}, 
\end{equation}
as well as
\begin{equation}\label{S2}
\left\|\int_{0}^{t}S_\eps(\cdot-s)F(s)\;ds \right\|_{L^{q}_{t}L^{r}_{x_2}H^{-\frac{2}{q}}_{x_1}} \le C_2\| F \|_{ L^{\gamma^{\prime}}_{t}L^{\rho^{\prime}}_{x_2}H^{\frac{2}{\gamma}}_{x_1} }. 
\end{equation}
Here, one should note the loss of derivatives in the $x_1$-direction.


\subsection{Global existence results} Using the Strichartz estimates stated above, we shall now prove some $L^2$-based global existence 
results for the solution $v$ to \eqref{vPDNLS}. In turn, this will yield global existence results (in mixed spaces) for the original equation \eqref{ppDNLS} via the transformation $v = P^{1/2}_\eps u$.  

To this end, we first recall that in the case without self-steepening $\delta =0$, the results of \cite{AAS} directly give:

\begin{proposition}[Partial off-axis variation without self-steepening]\label{thm:sub}
Let $\sigma<2$. Then for any initial data $u_0 \in L^{2}(\R_{x_2}; H^{1}(\R_{x_1}))$ there exists a unique global-in-time solution $u \in C(\R_t;L^{2}(\R_{x_2}; H^{1}(\R_{x_1})))$ to 
\begin{equation}\label{parnos}
i (1-\eps^2 \partial_{x_1}^2) \partial_{t}u + \Delta u + |u|^{2\sigma}u =0, \quad u_{\mid t=0}=u_0(x_1,x_2).
\end{equation}
\end{proposition}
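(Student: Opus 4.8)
The plan is to treat this as the $\delta=0$ specialization of \eqref{vPDNLS} and to run a Strichartz-based fixed point argument for the unknown $v=P_\eps^{1/2}u$, exactly in the spirit of \cite{AAS}. For $\delta=0$ the transformed equation reads
\begin{equation*}
i\partial_t v + P_\eps^{-1}\Delta v + P_\eps^{-1/2}\big(|P_\eps^{-1/2}v|^{2\sigma}P_\eps^{-1/2}v\big)=0, \qquad v_{\mid t=0}=P_\eps^{1/2}u_0 \in L^2(\R_x^2),
\end{equation*}
so it suffices to produce a global solution $v\in C(\R_t;L^2_x)$: the assertion for $u=P_\eps^{-1/2}v$ then follows since, for fixed $\eps>0$, the multiplier $(1+\xi_1^2)^{1/2}(1+\eps^2\xi_1^2)^{-1/2}$ is bounded above and below, so that $P_\eps^{-1/2}$ maps $L^2_x$ isomorphically onto $L^2_{x_2}H^1_{x_1}$. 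Writing Duhamel's formula
\begin{equation*}
v(t)=S_\eps(t)v_0+i\int_0^t S_\eps(t-s)P_\eps^{-1/2}\big(|w|^{2\sigma}w\big)(s)\,ds, \qquad w:=P_\eps^{-1/2}v,
\end{equation*}
I would seek a fixed point in a ball of a mixed space of the form $L^\infty_tL^2_x\cap L^q_tL^r_{x_2}H^{s}_{x_1}$ on a short interval $[0,T]$, estimating the linear term by \eqref{S1} and the Duhamel term by \eqref{S2}.

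The heart of the matter is the nonlinear estimate, and this is the step I expect to be the main obstacle. Applying \eqref{S2} reduces matters to controlling the nonlinearity in a norm of type $L^{\gamma'}_tL^{\rho'}_{x_2}H^{2/\gamma}_{x_1}$, which carries a \emph{positive} loss of $x_1$-derivatives. The idea is to absorb this loss through the smoothing operators $P_\eps^{-1/2}$: the outer factor gains one full $x_1$-derivative, so that $\|P_\eps^{-1/2}(|w|^{2\sigma}w)\|_{H^{2/\gamma}_{x_1}}\lesssim_\eps \||w|^{2\sigma}w\|_{H^{2/\gamma-1}_{x_1}}$ with $2/\gamma-1<0$, while each inner factor converts $L^2_{x_1}$-control of $v$ into $H^1_{x_1}$-control of $w$. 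I would then estimate the nonlinearity pointwise in $(t,x_2)$ using the one-dimensional Sobolev embedding $H^1(\R_{x_1})\hookrightarrow L^\infty(\R_{x_1})$, distribute the $x_2$-integrability by H\"older, and recover the required space-time norm by interpolating between the conserved $L^\infty_tL^2_x$ bound and the Strichartz norm of $v$, thereby fixing an admissible pair $(\gamma,\rho)$. The constraint that all resulting exponents stay Strichartz admissible in the sense of \eqref{adm} is precisely what forces $\sigma<2$: it acts as the subcriticality condition that lets the multilinear estimate close, at the cost of an $\eps$-dependent constant.

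Granting this estimate, Banach's fixed point theorem yields a unique local solution on $[0,T]$ with $T=T(\|v_0\|_{L^2})$, together with continuous dependence on the data and enough Strichartz regularity to justify the formal steps. To globalize I would invoke the conservation law \eqref{CM}, $\|v(t)\|_{L^2_x}=\|v_0\|_{L^2_x}$, which provides an a priori bound uniform in $t$; since the local existence time depends only on this conserved quantity, the local argument can be iterated on successive intervals of fixed length to reach $T_{\max}=+\infty$, and time reversibility (modulo complex conjugation) extends the solution to all $t\in\R$. Undoing the substitution produces the asserted global solution $u\in C(\R_t;L^2_{x_2}H^1_{x_1})$. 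Finally I would note that, because $\delta=0$, no derivative of the unknown appears inside the nonlinearity, so the phase and self-steepening difficulties of the general case are entirely absent and this is genuinely the situation already treated in \cite{AAS}.
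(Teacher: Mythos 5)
Your proposal is correct and follows essentially the same route as the paper: the paper does not reprove this proposition but quotes it directly from \cite{AAS}, whose argument---the substitution $v=P_\eps^{1/2}u$, the loss-of-derivative Strichartz estimates \eqref{S1}--\eqref{S2}, a fixed point in mixed spaces closed by a positive power $T^{1-\sigma/2}$ (which is where $\sigma<2$ enters), and globalization via the conservation law \eqref{CM}---is exactly what you sketch. The same scheme is carried out in detail in the paper's proof of Theorem \ref{thm:subdel}, the only structural difference being that there the derivative nonlinearity forces the algebra property of $H^{1-2/\gamma}(\R)$ (hence $\sigma\in\N$), whereas in your $\delta=0$ setting the full derivative gain from $P_\eps^{-1/2}$ lands the nonlinearity in a negative-order Sobolev space, so plain H\"older and Sobolev embeddings suffice for all $\sigma<2$.
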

Our numerical findings in the next section indicate that this result is indeed sharp, i.e., that for $\sigma\ge 2$ global existence in general 
no longer holds. 
\smallskip

Next, we shall take into account the effect of self-steepening, and rewrite \eqref{vPDNLS} using Duhamel's formula:
\begin{equation}\label{intPNLS}
\begin{split}
v(t)= &\, S_\eps(t) v_{0} + i\int_{0}^{t} S_{\eps}(t-s)P_{\eps}^{-1/2}(1+i \delta \partial_{x_1})(|P_{\eps}^{-1/2}v|^{2\sigma}P_{\eps}^{-1/2}v)(s) \;ds\\
\equiv &\, \Phi(v)(t).
\end{split}
\end{equation}
To prove that $\Phi$ is a contraction mapping, the following lemma is key.

\begin{lemma}\label{lem:parstp}
Let $g(z) = |z|^{2\sigma}z$ with $\sigma \in \N$. For $t\in [0,T]$ denote 
\begin{equation}\label{nlterm}
\mathcal{N}(v)(t) := i\int_{0}^{t} S_\eps(t-s)P_{\eps}^{-1/2}(1+i \delta\partial_{x_1} )g(P_{\eps}^{-1/2}v(s)) \;ds,
\end{equation}
and choose the admissible pair $(\gamma,\rho)= \big(\frac{4(\sigma+1)}{\sigma},2(\sigma+1)\big)$. Then for $\eps,\delta >0$, 
it holds:
\begin{align*}
& \, \big\|\mathcal{N}(v)-\mathcal{N}(v')\big\|_{L^{\gamma}_{t}L^{\rho}_{x_2}H^{-\frac{2}{\gamma}}_{x_1}} \\
& \, \lesssim \eps^{-2(\sigma+1)}(1+\delta)T^{1-\frac{\sigma}{2}}\Big(\|v\|_{L^{\gamma}_{t}L^{\rho}_{x_2}H^{-\frac{2}{\gamma}}_{x_1}}^{2\sigma}+\|v'\|_{L^{\gamma}_{t}L^{\rho}_{x_2}H^{-\frac{2}{\gamma}}_{x_1}}^{2\sigma}\Big)
\|v-v'\|_{L^{\gamma}_{t}L^{\rho}_{x_2}H^{-\frac{2}{\gamma}}_{x_1}}.
\end{align*}
\end{lemma}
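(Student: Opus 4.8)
The plan is to insert the Duhamel term \eqref{nlterm} into the inhomogeneous Strichartz estimate \eqref{S2}, taking \emph{both} admissible pairs there equal to $(\gamma,\rho)$. Writing $s:=\frac{2}{\gamma}=\frac{\sigma}{2(\sigma+1)}$ and setting $w:=P_\eps^{-1/2}v$, $w':=P_\eps^{-1/2}v'$, estimate \eqref{S2} then reduces the claim to a bound for the dual norm of the nonlinear forcing:
\begin{equation*}
\big\|\mathcal N(v)-\mathcal N(v')\big\|_{L^\gamma_tL^\rho_{x_2}H^{-s}_{x_1}}\lesssim \big\|P_\eps^{-1/2}(1+i\delta\partial_{x_1})\big(g(w)-g(w')\big)\big\|_{L^{\gamma'}_tL^{\rho'}_{x_2}H^{s}_{x_1}}.
\end{equation*}
All the remaining work lies in the right-hand side, and the governing idea is that the loss of $s$ derivatives in $x_1$ encoded in the Strichartz spaces will be compensated by the one-derivative smoothing of $P_\eps^{-1/2}$.

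First I would dispose of the operator $P_\eps^{-1/2}(1+i\delta\partial_{x_1})$ sitting in front. Since it acts only in $x_1$, inside the $H^s_{x_1}$ norm it is the Fourier multiplier with symbol $\frac{1-\delta\xi_1}{(1+\eps^2\xi_1^2)^{1/2}}$, which (using $\eps\le 1$) is bounded by $\eps^{-1}(1+\delta)$ uniformly in $\xi_1$. Hence, pointwise in $t$ and $x_2$,
\begin{equation*}
\big\|P_\eps^{-1/2}(1+i\delta\partial_{x_1})h\big\|_{H^s_{x_1}}\lesssim \eps^{-1}(1+\delta)\,\|h\|_{H^s_{x_1}}\le \eps^{-1}(1+\delta)\,\|h\|_{H^{1-s}_{x_1}},
\end{equation*}
where the last step uses $s\le 1-s$, valid since $s<\tfrac12$.

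The heart of the matter is the nonlinear difference $g(w)-g(w')$, with $g(z)=|z|^{2\sigma}z$. The key structural choice is to measure $w$ in $H^{1-s}_{x_1}$: because $1-s=\frac{\sigma+2}{2(\sigma+1)}>\tfrac12$, the space $H^{1-s}(\R)$ is a Banach algebra, and since $\sigma\in\N$ the nonlinearity $g$ is a genuine polynomial of degree $2\sigma+1$ in $(z,\bar z)$. A standard telescoping/Moser argument then gives, for each fixed $t,x_2$,
\begin{equation*}
\big\|g(w)-g(w')\big\|_{H^{1-s}_{x_1}}\lesssim \big(\|w\|_{H^{1-s}_{x_1}}^{2\sigma}+\|w'\|_{H^{1-s}_{x_1}}^{2\sigma}\big)\|w-w'\|_{H^{1-s}_{x_1}}.
\end{equation*}
The smoothing is now extracted from $\|P_\eps^{-1/2}v\|_{H^{1-s}_{x_1}}\lesssim\eps^{-1}\|v\|_{H^{-s}_{x_1}}$ (the symbol ratio $(1+\xi_1^2)^{1/2}(1+\eps^2\xi_1^2)^{-1/2}\le\eps^{-1}$), applied to all $2\sigma+1$ factors; together with the $\eps^{-1}(1+\delta)$ from the front operator this yields exactly the prefactor $\eps^{-2(\sigma+1)}(1+\delta)$ and converts every $H^{1-s}_{x_1}$ norm of $w$ into an $H^{-s}_{x_1}=H^{-2/\gamma}_{x_1}$ norm of $v$.

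It remains to organise the mixed norms. In $x_2$ I would exploit the identity $\frac1{\rho'}=\frac{2\sigma+1}{\rho}$ to distribute the $2\sigma+1$ spatial factors by H\"older, turning the pointwise product bound into a product of $L^\rho_{x_2}H^{-s}_{x_1}$ norms of $v$, $v'$ and $v-v'$. Finally, taking the $L^{\gamma'}_t$ norm over $[0,T]$ and applying H\"older in time to a product of $2\sigma+1$ factors each in $L^\gamma_t$ produces a factor $T^{\theta}$ with
\begin{equation*}
\theta=\frac1{\gamma'}-\frac{2\sigma+1}{\gamma}=1-\frac{\sigma}{2},
\end{equation*}
which is precisely the asserted power of $T$. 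I expect the main obstacle to be the fractional derivative bookkeeping in $x_1$: one must commit to the single space $H^{1-s}_{x_1}$ that is simultaneously an algebra (forcing $1-s>\tfrac12$), compatible with the $H^s$-output through $s\le 1-s$, and reachable from the negative-order norm $H^{-2/\gamma}_{x_1}$ via the one-derivative gain of $P_\eps^{-1/2}$; keeping these three constraints mutually consistent, alongside the exponent arithmetic, is the delicate point. Note finally that $\theta>0$ — hence usefulness for a small-time contraction — requires $\sigma<2$.
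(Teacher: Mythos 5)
Your proposal is correct and follows essentially the same route as the paper's proof: Strichartz estimate \eqref{S2} with both pairs equal to $(\gamma,\rho)$, the $\eps^{-1}(1+\delta)$ bound for $P_\eps^{-1/2}(1+i\delta\partial_{x_1})$ combined with the embedding into $H^{1-2/\gamma}_{x_1}$, the algebra property of $H^{1-2/\gamma}(\R)$ for the polynomial nonlinearity, the $\eps^{-1}$-per-factor smoothing of $P_\eps^{-1/2}$, and H\"older in $x_2$ and $t$ with exactly the exponent bookkeeping the paper uses. The only (immaterial) difference is that you bound the front operator as a single order-zero multiplier rather than splitting it into a derivative gain and a derivative loss, and your closing observation that $T^{1-\sigma/2}$ is only useful for $\sigma<2$ matches the paper's remark following Theorem \ref{thm:subdel}.
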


\begin{proof}
First it is easy to check that $$(\gamma,\rho)= \Big(\frac{4(\sigma+1)}{\sigma},2(\sigma+1)\Big)$$
is admissible in the sense of \eqref{adm}. Moreover, 
since $\gamma >4$ we have $\frac{2}{\gamma} < \frac{1}{2}$, from 
which we infer that $H^{1-\frac{2}{\gamma}}(\R)$ is indeed a normed Banach algebra, a fact to be used below. Using the Strichartz estimate \eqref{S2} we have 
\begin{align*}
 \big\|\mathcal{N}(v)&-\mathcal{N}(v')\big\|_{L^{\gamma}_{t}L^{\rho}_{x_2}H^{-\frac{2}{\gamma}}_{x_1}}\\
 &\le C_{2}\big\|P_{\eps}^{-1/2}\big(1+i\delta\partial_{x_1}\big)\big(g(P_{\eps}^{-1/2}v)-g(P_{\eps}^{-1/2}v')\big)\big\|_{L^{\gamma'}_{t}L^{\rho'}_{x_2}H^{\frac{2}{\gamma}}_{x_1}}.
\end{align*}
For simplicity we shall in the following denote $u=P_{\eps}^{-1/2}v,u'=P_{\eps}^{-1/2}v'$ in view of \eqref{eq:trans}. Keeping $t$ and $x_2$ fixed we can estimate
\begin{align*}
\big\|P_{\eps}^{-1/2}\big(1+i\delta\partial_{x_1}\big)&\big(g(P_{\eps}^{-1/2}v)-g(P_{\eps}^{-1/2}v')\big)\big\|_{H^{\frac{2}{\gamma}}_{x_1}}\\
&\le \eps^{-1}\|\big(1+i\delta\partial_{x_1}\big)\big(g(u)-g(u')\big)\big\|_{H^{\frac{2}{\gamma}-1}_{x_1}}\\
&\le  \eps^{-1}(1+\delta)\|g(u)-g(u')\|_{H^{1-\frac{2}{\gamma}}_{x_1}},
\end{align*}
where in the last inequality we have used the fact that $ H^{1-\frac{2}{\gamma}}(\R) \subset H^{\frac{2}{\gamma}}(\R)$. Next, use again \eqref{g1} which together with the algebra property of $H^{1-\frac{2}{\gamma}}(\R)$ for $\sigma \in \N$ implies
\begin{align*}
\|g(u)-g(u')\|_{H^{1-\frac{2}{\gamma}}_{x_1}} &\lesssim \Big( \|u\|^{2\sigma}_{H^{1-\frac{2}{\gamma}}_{x_1}}  + \|u'\|^{2\sigma}_{H^{1-\frac{2}{\gamma}}_{x_1}}  \Big)\|u-u'\|_{H^{1-\frac{2}{\gamma}}_{x_1}}\\
&\lesssim \eps^{-2(\sigma+1)}\Big( \|v\|^{2\sigma}_{H^{-\frac{2}{\gamma}}_{x_1}}  + \|v'\|^{2\sigma}_{H^{-\frac{2}{\gamma}}_{x_1}}  \Big)\|v-v'\|_{H^{-\frac{2}{\gamma}}_{x_1}}.
\end{align*}
It consequently follows after H\"older's inequality in $x_2$, that we obtain 
\begin{align*}
\big\|\mathcal{N}(v)&-\mathcal{N}(v')\big\|_{L^{\rho}_{x_2}H^{-\frac{2}{\gamma}}_{x_1}}\\
&\lesssim \eps^{-(2\sigma+1)}(1+\delta)\Big( \|v\|^{2\sigma}_{L_{x_2}^{\rho}H^{-\frac{2}{\gamma}}_{x_1}}  + \|v'\|^{2\sigma}_{L_{x_2}^{\rho}H^{-\frac{2}{\gamma}}_{x_1}}  \Big)\|v-v'\|_{L_{x_2}^{\rho}H^{-\frac{2}{\gamma}}_{x_1}}.
\end{align*}
The result then follows after applying yet another H\"older's inequality in $t$.
\end{proof}

This lemma allows us to prove the following global existence result for \eqref{ppDNLS}. 

\begin{theorem}[Partial off-axis variation with parallel self-steepening]\label{thm:subdel}
Let $\sigma = 1$ and $\bd =(\delta,0)^{\top}$ for $\delta \in \R$.
Then for any $u_0\in L^{2}(\R_{x_2}; H^{1}(\R_{x_1}))$ there exists a unique global solution 
$u\in C(\R_t;L^{2}(\R_{x_2}; H^{1}(\R_{x_1})))$ to \eqref{ppDNLS}.
\end{theorem}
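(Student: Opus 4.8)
The plan is to pass to the transformed unknown $v = P_\eps^{1/2}u$ solving \eqref{vPDNLS}, for which the conservation law \eqref{CM} gives $\|v(t)\|_{L^2_x} = \|v_0\|_{L^2_x}$. Since $P_\eps^{1/2}$ acts only in $x_1$ and is an isomorphism of $H^1(\R_{x_1})$ onto $L^2(\R_{x_1})$ fiberwise in $x_2$, global existence of $v$ in $C(\R_t;L^2_x)$ is equivalent, via \eqref{eq:trans}, to global existence of $u$ in $C(\R_t; L^2(\R_{x_2};H^1(\R_{x_1})))$; so it suffices to construct $v$. I would first prove local existence by a fixed-point argument for the Duhamel map $\Phi$ in \eqref{intPNLS}, and then upgrade it to a global statement using the conserved $L^2$-norm.

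For the local step, fix $\sigma=1$ and the admissible pair $(\gamma,\rho)=(8,4)$ of Lemma~\ref{lem:parstp}. I would run the contraction on the ball
\[
X_{T,M} = \Big\{ v \in C([0,T];L^2_x)\cap L^{\gamma}_{t}L^{\rho}_{x_2}H^{-\frac{2}{\gamma}}_{x_1} : \|v\|_{L^{\gamma}_{t}L^{\rho}_{x_2}H^{-\frac{2}{\gamma}}_{x_1}} \le M \Big\},
\]
metrized by the Strichartz distance, abbreviating the norm above by $\|\cdot\|_{\mathrm{Str}}$. The homogeneous term is controlled by \eqref{S1}, giving $\|S_\eps(\cdot)v_0\|_{\mathrm{Str}}\le C_1\|v_0\|_{L^2_x}$. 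Taking $v'=0$ in Lemma~\ref{lem:parstp} (so $\mathcal N(0)=0$) yields the self-mapping bound $\|\mathcal N(v)\|_{\mathrm{Str}}\lesssim \eps^{-4}(1+\delta)T^{1/2}\|v\|_{\mathrm{Str}}^{3}$, while the full Lemma furnishes the Lipschitz bound with constant $\lesssim \eps^{-4}(1+\delta)T^{1/2}M^2$. Choosing $M = 2C_1\|v_0\|_{L^2_x}$ and then $T$ small (depending only on $\eps$, $\delta$, and $M$, hence only on $\|v_0\|_{L^2_x}$) makes $\Phi$ a self-map and a contraction of $X_{T,M}$, and Banach's fixed point theorem produces a unique $v$ on $[0,T]$. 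Continuity $v\in C([0,T];L^2_x)$ and the bound in $L^\infty_tL^2_x$ then follow by reinserting the fixed point into \eqref{intPNLS} and invoking \eqref{S1}--\eqref{S2} with the endpoint pair $(q,r)=(\infty,2)$, for which the derivative loss is trivial ($H^{-2/q}_{x_1}=L^2_{x_1}$), together with a routine density argument; the source term is estimated exactly as in the proof of Lemma~\ref{lem:parstp}.

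The global step rests on the fact that, for $\sigma=1$, the existence time $T$ above depends on the datum only through $\|v_0\|_{L^2_x}$. Because \eqref{CM} keeps $\|v(t)\|_{L^2_x}=\|v_0\|_{L^2_x}$ constant along the flow, I may restart the local argument at $t=T,2T,\dots$ with an existence time of the same fixed length at each step, extending $v$ to all of $[0,+\infty)$; since \eqref{vPDNLS} is invariant under $t\mapsto -t$ combined with complex conjugation, the construction covers $t<0$ as well, yielding $v\in C(\R_t;L^2_x)$ and hence the claimed $u$.

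The main obstacle, and the reason the theorem is confined to $\sigma=1$, is the factor $T^{1-\sigma/2}$ in Lemma~\ref{lem:parstp}: the scheme closes for small $T$ only when $\sigma<2$, and the Banach-algebra argument underlying the Lemma forces $\sigma\in\N$, leaving $\sigma=1$ as the sole admissible value. Heuristically, the derivative nonlinearity together with the derivative loss built into the Strichartz estimates \eqref{S1}--\eqref{S2} costs essentially one full power of regularity, which the partial smoothing by $P_\eps^{-1/2}$ can compensate only in this borderline subcritical regime. The delicate point to verify carefully is therefore that this regularity budget balances precisely at $(\gamma,\rho)=(8,4)$ with the positive power $T^{1/2}$, so that the local time genuinely depends on $\|v_0\|_{L^2_x}$ alone and the conservation law \eqref{CM} can drive the iteration to infinity.
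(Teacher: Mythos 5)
Your local-in-time construction coincides with the paper's: the same change of unknown $v=P_\eps^{1/2}u$, the same admissible pair $(8,4)$ (the paper's $\big(\tfrac{4(\sigma+1)}{\sigma},2(\sigma+1)\big)$ at $\sigma=1$), the same contraction built on Lemma \ref{lem:parstp}, and the same restart-plus-time-reversal scheme for globality; running the contraction in the Strichartz norm alone and recovering $L^\infty_tL^2_x$ afterwards from the endpoint pair $(\infty,2)$ is an immaterial variant of the paper's ball $Y_{T,M}$, which carries both norms at once.

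The genuine gap is that you never prove the conservation law \eqref{CM} for the mild solution you construct: you invoke it as an established fact, but in the paper it is introduced only as a \emph{formal} identity, and its rigorous verification for the fixed point of \eqref{intPNLS} occupies roughly half of the paper's proof of this theorem. The issue is not cosmetic. At the available regularity ($v\in C_tL^2_x$, i.e.\ $u\in L^2_{x_2}H^1_{x_1}$) the formal computation behind \eqref{CM} --- pairing the equation with $\bar v$ and integrating by parts --- is not licensed, and the classical remedy of approximating by smoother solutions is unavailable here, since there is no higher-regularity well-posedness theory for this derivative nonlinearity. The paper instead adapts an argument of Ozawa \cite{Oz}: starting from $S_\eps(-t)v(t)=v_0+S_\eps(-t)\mathcal{N}(v)(t)$ one expands
\begin{equation*}
\|v(t)\|_{L^2_x}^2=\|v_0\|_{L^2_x}^2+\mathcal{I}_1+\mathcal{I}_2,
\end{equation*}
shows that $\mathcal{I}_1$ and $\mathcal{I}_2$ are finite by the Strichartz and duality estimates, and proves $\mathcal{I}_1+\mathcal{I}_2=0$; the key structural input is that the derivative part of the nonlinearity is a perfect $x_1$-derivative, so that, up to a constant,
\begin{equation*}
\re\,\big\langle g(P_\eps^{-1/2}v),\,\partial_{x_1}P_\eps^{-1/2}v\big\rangle_{L^2_x}
=\int_{\R^2}\partial_{x_1}\big(|P_\eps^{-1/2}v|^{2\sigma+2}\big)\,dx=0,
\end{equation*}
and hence it contributes nothing to the mass balance. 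Without this step (or some substitute for it) your iteration at times $T,2T,\dots$ has no a priori bound to run on, and the global statement does not follow; supplying it, e.g.\ along the lines of \cite{Oz} or of Proposition 4.2 in \cite{AAS}, is exactly what is missing from your proposal.
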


Here, the restriction $\sigma =1$ is due to the fact that this is the only $\sigma \in \N$ (required for the normed algebra property above) for which the problem is subcritical.  Indeed, in view of the estimate in Lemma \ref{lem:parstp}, the exponent $1-\frac{\sigma}{2} >0$ yields a contraction for small times. 

\begin{proof} 
We seek to show that $v\mapsto \Phi (v)$ is a contraction mapping in a suitable space. To this end, we denote, as before,
\begin{equation*}
\Phi(v)(t) = S_\eps(t)v_{0} + \mathcal{N}(v)(t),
\end{equation*}
where $\mathcal{N}(v)$ is given by \eqref{nlterm}. Let $T,M>0$ and denote
\begin{align*}
Y_{T,M} = &\{ v \in  L^{\infty}([0,T);L^{2}(\R_{x}^{2})) \cap L^{8}([0,T);L^{4}(\R_{x_2} ; H^{-\frac{1}{4}}(\R_{x_1}))): \\ 
& \ \|v \|_{L_{t}^{\infty}L_{x}^{2}} + \|v \|_{L^{8}_{t}L^{4}_{x_2}H^{-\frac{1}{4}}_{x_1}} \le M \}.
\end{align*}

The Strichartz estimates \eqref{S1} and \eqref{S2} together with Lemma \ref{lem:parstp} imply that for any admissible pair $(q,r)$ and solutions $v,v' \in Y_{T,M}$ that 
\begin{align*}
& \| \Phi(v) - \Phi(v') \|_{L^{q}_{t}L^{r}_{x_2}H^{-\frac{2}{q}}_{x_1} } \\
&\,  \le\| S_{\eps}(t)(v_{0}-v'_{0}) \|_{L^{q}_{t}L^{r}_{x_2}H^{-\frac{2}{q}}_{x_1}} + \|\mathcal{N}(v) - \mathcal{N}(v')\|_{L^{q}_{t}L^{r}_{x_2}H^{-\frac{2}{q}}_{x_1}} \\
&\, \le C_{1}\|v_{0} -v'_{0}\|_{L_{x}^{2}} + C_{2}\big\|P_{\eps}^{-1/2}\big(1+i\delta\partial_{x_1}\big)\big(g(P_{\eps}^{-1/2}v) - g(P_{\eps}^{-1/2}v')\big)\big\|_{L^{\frac{8}{7}}_{t}L^{\frac{4}{3}}_{x_2}H^{\frac{1}{4}}_{x_1}}\\
&\, \le C_{\sigma,\eps}\Big(\| v_{0} -v'_{0} \|_{L_{x}^{2}} +  T^{1/2}M^{2}\|v-v'\|_{L^{8}_{t}L^{4}_{x_2}H^{-\frac{1}{4}}_{x_1}} \Big).
\end{align*}
Choosing $M=M(\|v_{0}\|_{L_{x}^{2}})$ and $T$ sufficiently small, it is clear that $\Phi$ is a contraction on $Y_{T,M}$. Banach's fixed point theorem 
and a standard continuity argument thus yield the existence of a unique maximal solution $v \in C([0,T_{\rm max}),L^{2}(\R_{x}^{2}))$ where $T_{\rm max}=T_{\rm max}(\|v_0\|_{L_{x}^2})$. 
Continuous dependence on the initial data follows by classical arguments. 

The conservation property \eqref{CM} for $v$ follows similarly as in the proof of Proposition $4.2$ in \cite{AAS} and we shall therefore 
only sketch its main steps below.
By the unitary of $S_{\eps}(\cdot)$ in $L^{2}$ we obtain 
\begin{align*}
\|v(t)\|_{L_{x}^{2}} &= \|v_{0}\|_{L_{x}^{2}}
+2\re \, \big \langle S_\eps(-t)\mathcal{N}(v)(t),v_{0} \big\rangle_{L_{x}^{2}} + \| S_\eps(-t)\mathcal{N}(v)(t) \|^2_{L_{x}^{2}}\\
&=: \|v_{0}\|_{L_{x}^{2}}  + \mathcal{I}_{1} + \mathcal{I}_{2}. 
\end{align*}
To show that $\mathcal{I}_{1} + \mathcal{I}_{2}=0$, we use \eqref{intrepu1} and rewrite
\begin{align*}
\mathcal{I}_{1} = -2\im  \int_{0}^{t} \big\langle  P_{\eps}^{-1/2}(1+i\delta \partial_{x_{1}})g(P_{\eps}^{-1/2}v)(s) , S_{\eps}(s)v_0  \big\rangle_{L_{x}^2} \;ds.
\end{align*}
By duality in $x_1$ and H\"older's inequality in $t$ and $x_2$ we find that this quantity is indeed finite, since
\begin{align*}
|\mathcal{I}_{1}| \le 2 \| P_{\eps}^{-1/2}(1+i\delta \partial_{x_{1}})g(P_{\eps}^{-1/2}v)\|_{L^{\gamma'}_{t}L^{\rho'}_{x_2}H^{\frac{2}{\gamma}}_{x_1}}\| S_{\eps}(\cdot)v_0 \|_{L^{\gamma}_{t}L^{\rho}_{x_2}H^{-\frac{2}{\gamma}}_{x_1}} <\infty.
\end{align*}
Once again we find, after a lengthy computation (see \cite{AAS} for more details), that
\begin{align*}
\mathcal{I}_{2}& = 2 \re \int_{0}^{t} \big\langle  P_{\eps}^{-1/2}(1+i\delta \partial_{x_{1}})g(P_{\eps}^{-1/2}v)(s) , -i\mathcal{N}(u)(s) \big\rangle_{L_{x}^{2}}\;ds.
\end{align*}
We express $-i\mathcal{N}(u)(s)$ using the integral formulation \eqref{intPNLS} and write
\begin{align*}
\mathcal{I}_{2} &= 2 \re \, \int_{0}^{t} \big\langle  P_{\eps}^{-1/2}(1+i\delta \partial_{x_{1}})g(P_{\eps}^{-1/2}v)(s) , iS_{\eps}(s)u_0 \big\rangle_{L_{x}^{2}}\;ds \\
&+ \int_{0}^{t}\im \, \|P_{\eps}^{-1/2}v(s)\|^{2\sigma 
+2}_{L_{x}^{2\sigma +2}} - \delta \, \re \, \big\langle g(P_{\eps}^{-1/2}v),  \partial_{x_1}P_{\eps}^{-1/2}v \big\rangle_{L_{x}^{2}}(s) \;ds.
\end{align*}
Here the second time integral vanishes entirely, and, as in the full off-axis case, the latter term in the integrand vanishes due to
\begin{equation*}
\re \, \big\langle g(P_{\eps}^{-1/2}v), \partial_{x_{1}}P_{\eps}^{-1/2}v \big\rangle_{L_{x}^{2}} = \frac{2}{(\sigma+1)}\int_{\R}\int_{\R}\partial_{x_1}(|P_{\eps}^{-1/2}v|^{2\sigma+2} )\;dx_{1} dx_{2} =0.
\end{equation*}
In summary, we find that 
\begin{align*}
\mathcal{I}_{2} = 2\im  \int_{0}^{t} \big\langle  P_{\eps}^{-1/2}g(P_{\eps}^{-1/2}v)(s) , S_{\eps}(s)u_0  \big\rangle_{L_{x}^2} \;ds = -\mathcal{I}_{1},
\end{align*}
which finishes the proof of \eqref{CM}. We can thus extend $v$ to become a global solution by repeated iterations to conclude $T_{\rm max} = +\infty$. 

Finally, 
we use the fact that $v=P_{\eps}^{1/2}u$ to obtain a unique global-in-time solution $u\in C(\R_t;L^{2}(\R_{x_2}; H^{1}(\R_{x_1})))$ which finishes the proof. 
\end{proof}

\begin{remark} It is possible to treat the critical case $\sigma=2$ using the same type of arguments as in \cite{CaWe} (see also \cite{AAS}). 
Unfortunately, this will only yield local-in-time solutions up to some time $T=T(u_0)>0$, which depends on the 
initial profile $u_0$ (and not only its norm). Only for sufficiently small initial data $\| u_0\|_{L^2_{x_2}H^1_{x_1}}< 1$, does one obtain a global-in-time solution. But 
since it is hard to detect small nonlinear effects numerically, we won't be 
concerned with this case in the following. We also mention the possibility of obtaining (not necessarily unique) global weak solutions 
for derivative NLS, which has been done in \cite{AmSi} in one spatial dimension.
\end{remark}

Theorem \ref{thm:subdel} covers the situation in which a partial off-axis regularization acts parallel to the self-steepening. At present, no analytical result 
for the case where the two effects act orthogonal to each other is available. Numerically, however, it is possible to study such a szenario:
To this end, we we recall that from the physics point of view, both $\varepsilon$ and $| \bd |$ have to be considered as (very) small parameters. 
With this in mind, we study the time-evolution of \eqref{ppDNLS} with $\sigma =1$, Gaussian initial data of the form \eqref{gaussian}, and a relatively small 
self-steepening, furnished by $\delta_1=0$ and $\delta_{2}=0.1$.
In the case where $\eps=0$, it can be seen on the left of
Fig.~\ref{gaussdy01inf} that the $L^{\infty}$-norm of the 
solution indicates a finite-time blow-up at $t\approx T = 0.1445$. In the same situation 
with a small, but nonzero $\varepsilon=0.1$, one can see that, instead, 
oscillations appear within the $L^{\infty}$-norm of the solution for $t\ge T$. 
\begin{figure}[htb!]
  \includegraphics[width=0.49\textwidth]{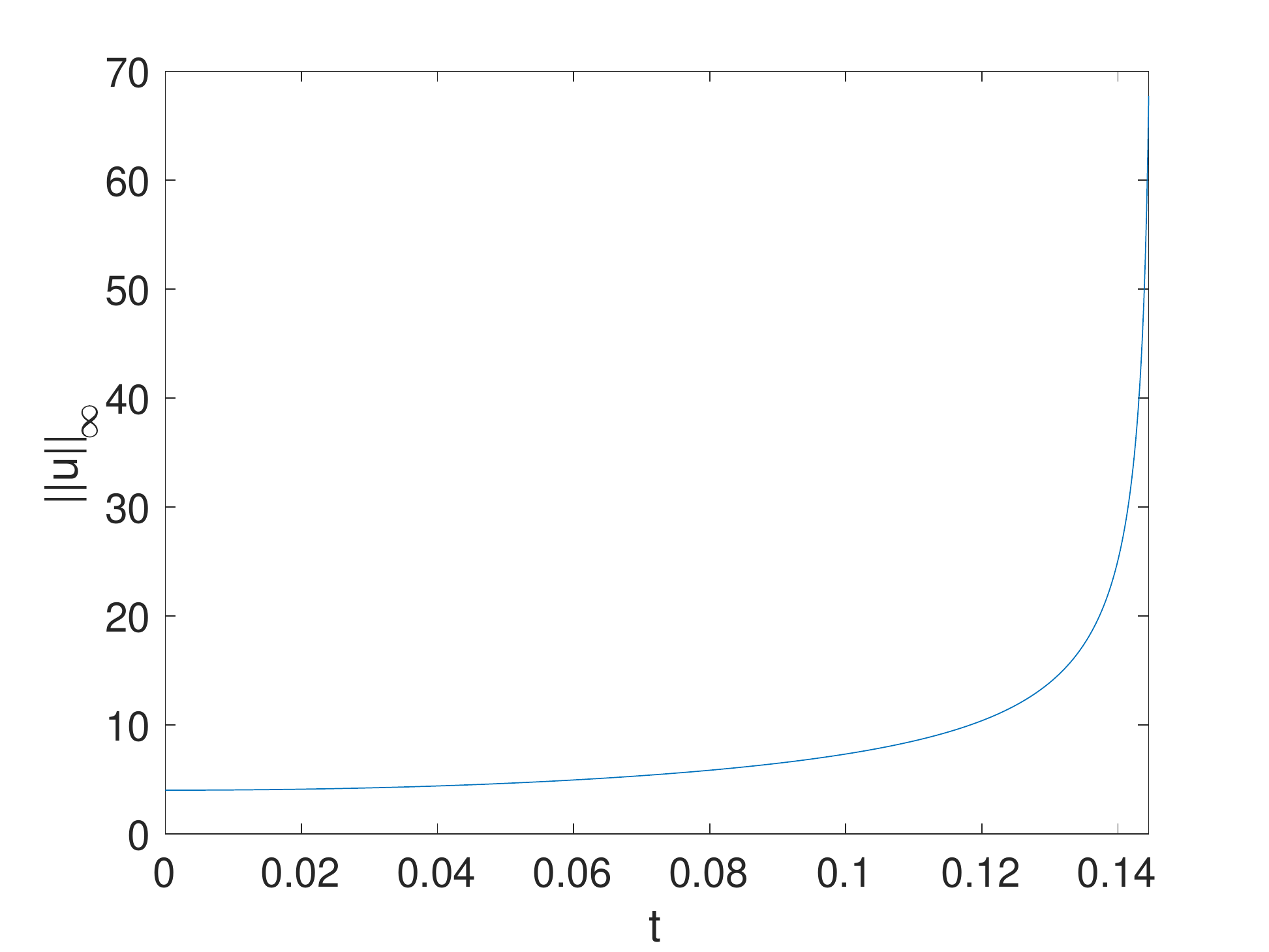}
  \includegraphics[width=0.49\textwidth]{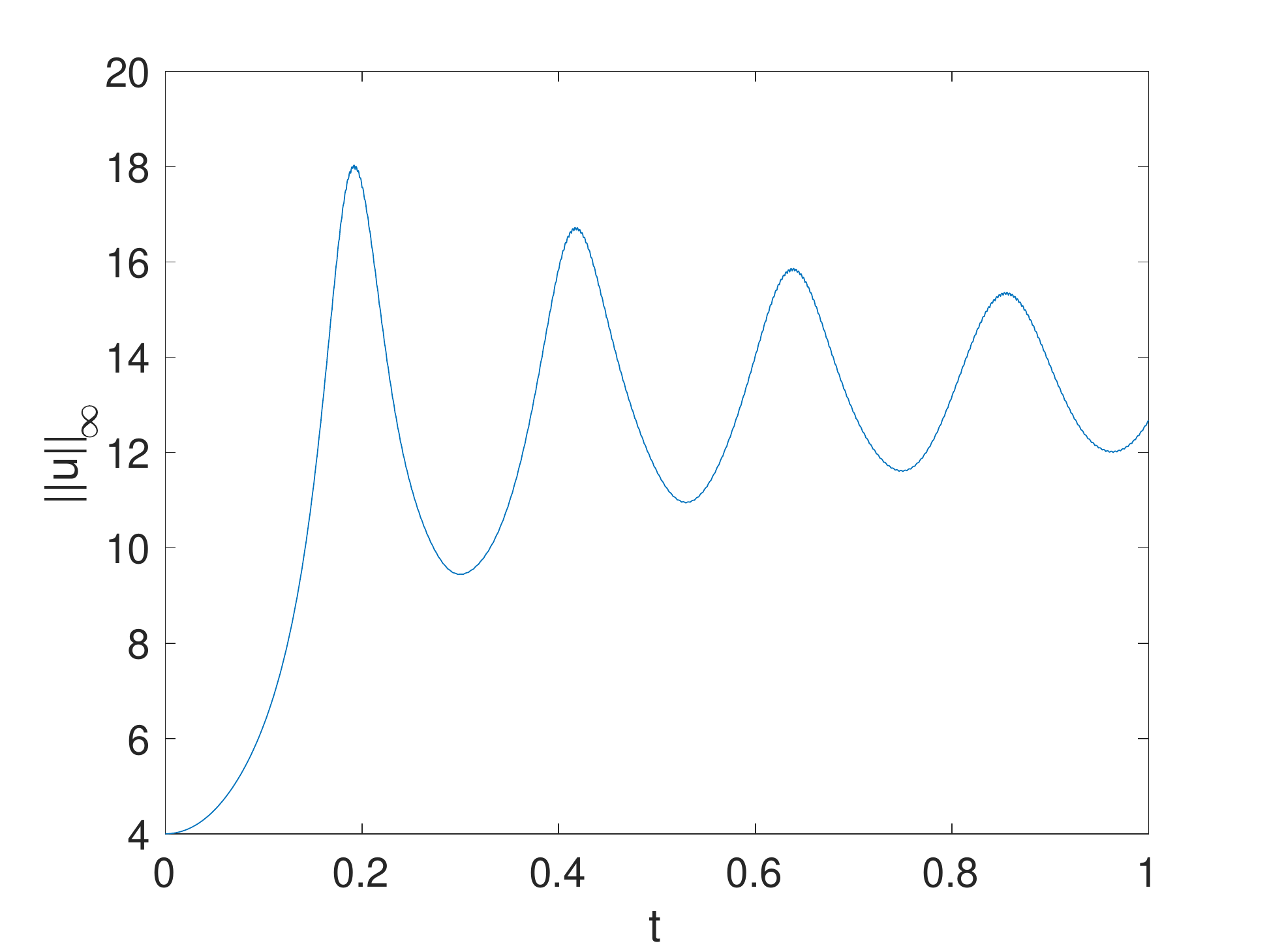}
 \caption{$L^{\infty}$-norm of the solution to \eqref{ppDNLS} with $\sigma =1$, $\bd=(0, 0.1)$, and 
 initial data $u_0=4\exp^{-x_{1}^{2}-x_{2}^{2}}$: On the left for 
 $\eps=0$, on the right for $\eps=0.1$.}
 \label{gaussdy01inf}
\end{figure}

Note that 
these oscillations appear to decrease in amplitude, which indicates the possibility of an asymptotically stable 
final state as $t\to +\infty$. 
A similar behavior can be seen for different choices of parameters and also for a full, two-dimensional off-axis variation (not shown here).


\section{Numerical studies for the case with partial off-axis variation}\label{sec:numpartial}

In this section we present numerical studies for the model \eqref{ppDNLS} with $\eps =1$ and different values of the self-steepening parameter ${\bd}$, 
as well as $\sigma>0$. We will always use $N_{x_{1}}=N_{x_{2}}=2^{10}$, Fourier coefficients on the numerical domain $\Omega$ given by \eqref{domain} with 
$L_{x_{1}}=L_{x_{2}}=3$. The time step is $\Delta t=10^{-2}$ unless otherwise noted. The initial data is the same as in \eqref{pert}, i.e. a numerically constructed stationary 
state $Q$ perturbed by adding and subtracting small Gaussians, respectively.

\subsection{The case without self-steepening}
We shall first study the particular situation furnished by equation \eqref{parnos} with $\eps =1$. It is 
obtained from the general model \eqref{PDNLS} in the case without self-steepening $\delta_{1}=\delta_{2}=0$:

In the case $\sigma=1$, the ground state perturbation in \eqref{pert} with a $``+"$ sign is unstable and results in a purely dispersive solution 
with monotonically decreasing $L^{\infty}$-norm, see Fig.~\ref{Nlssolex1+}. 
The modulus of the solution at time $t=2.5$ is shown on the right of the same figure. Interestingly, the initial hump appears to separate into four smaller humps and 
we thus lose radial symmetry of the solution. 
The situation is qualitatively similar for perturbations corresponding to the $``-"$ sign in \eqref{pert} and we thus omit a corresponding figure. 
\begin{figure}[htb!]
  \includegraphics[width=0.49\textwidth]{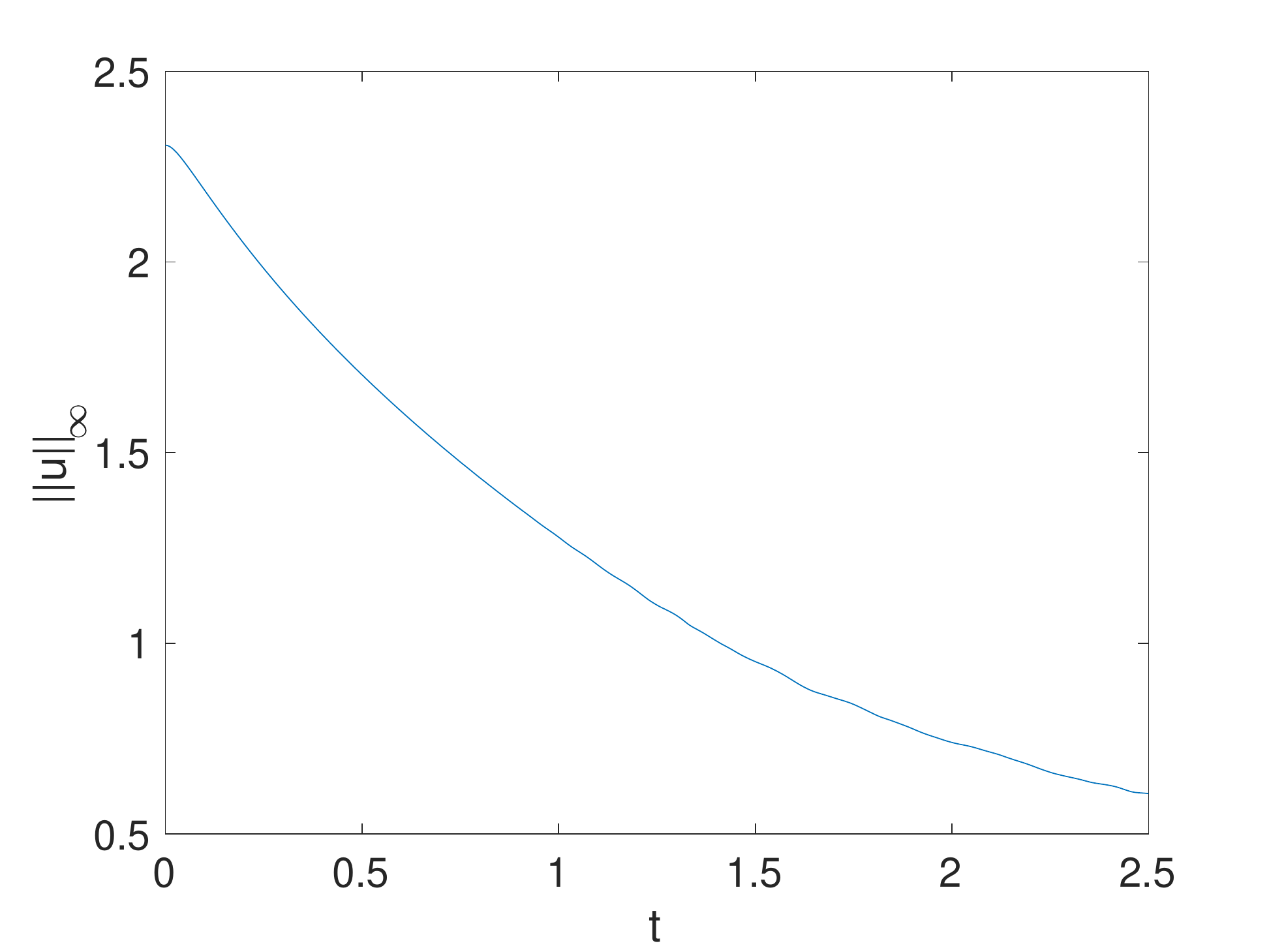}
  \includegraphics[width=0.49\textwidth]{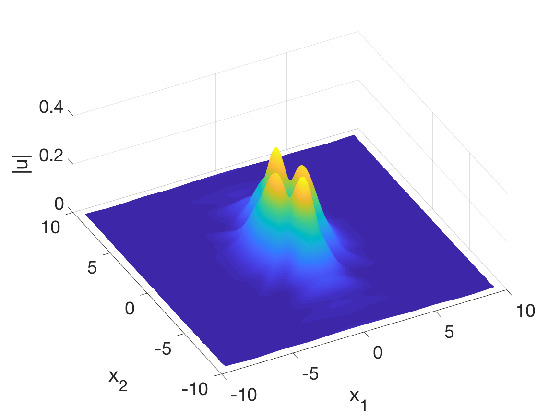}
 \caption{Solution to \eqref{parnos} with  $\eps =1$, 
 $\sigma=1$, and initial data (\ref{pert}) with a $``+"$ sign: On the 
 left the $L^{\infty}$-norm in dependence of time, on the right the modulus of $u$ at $t=2.5$.}
 \label{Nlssolex1+}
\end{figure}

The situation changes significantly for $\sigma=2$, as can be seen in 
Fig.~\ref{Nlssolp2ex1+max}. While the $L^{\infty}$-norm of the solution obtained from initial data \eqref{pert} with the $``-"$ sign is again decreasing, the $``+"$ sign yields 
a monotonically increasing $L^\infty$-norm indicating a blow-up at $t\approx 0.64$. 
\begin{figure}[htb!]
  \includegraphics[width=0.49\textwidth]{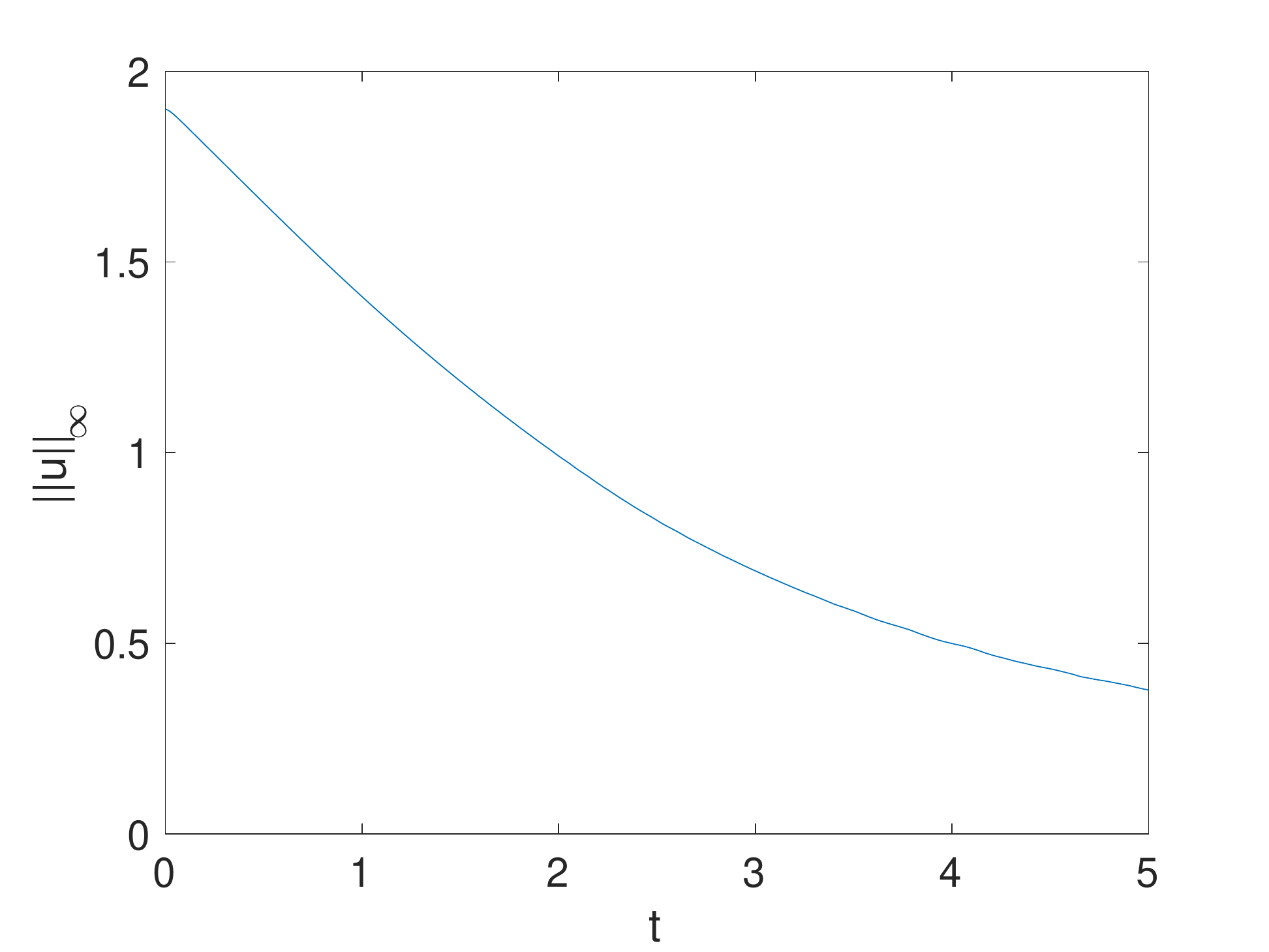}
  \includegraphics[width=0.49\textwidth]{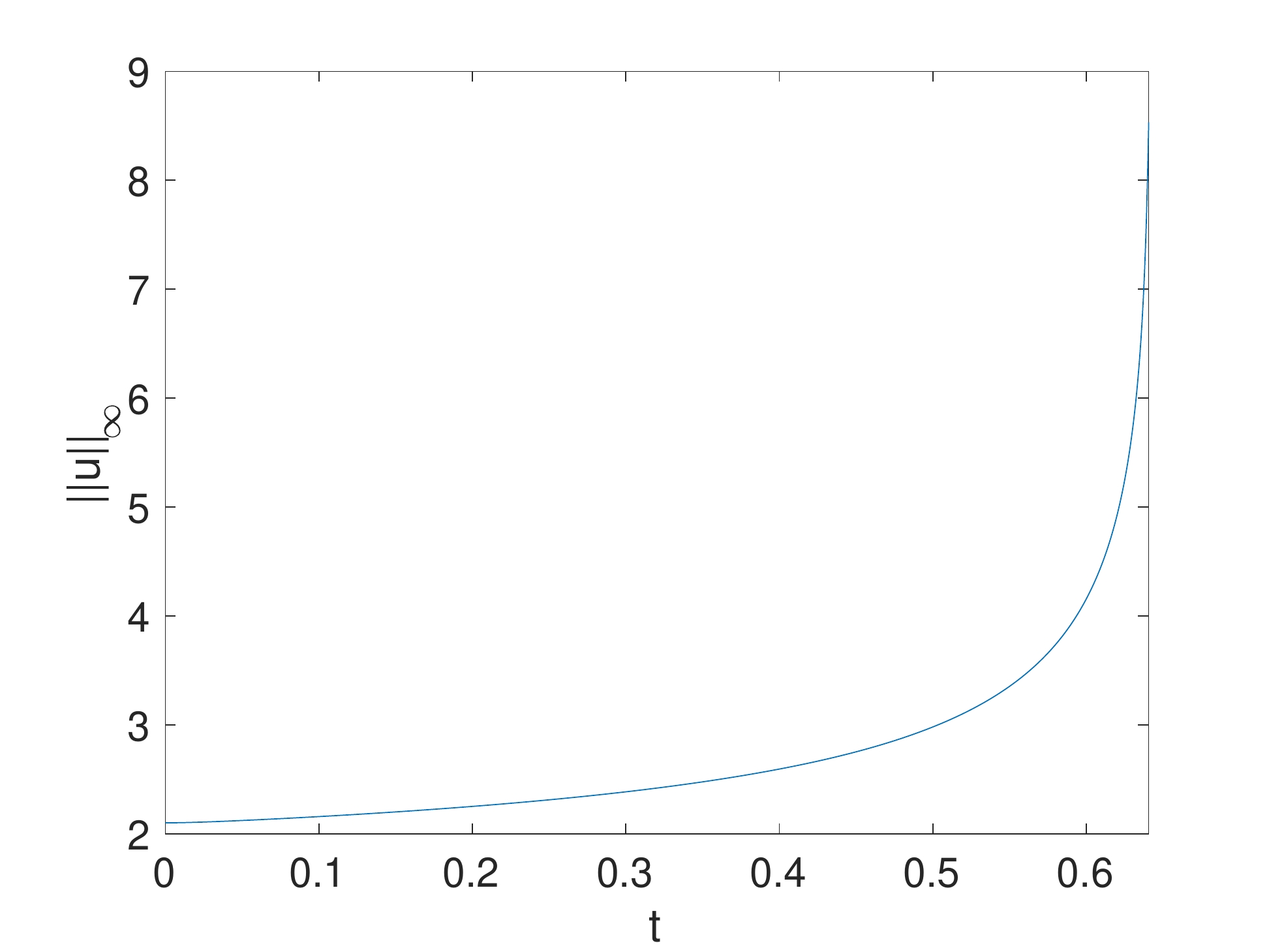} 
 \caption{Time-dependence of the $L^{\infty}$-norm of the solution to \eqref{parnos} with $\eps =1$, $\sigma=2$, and initial data \eqref{pert}: 
 On the left, the case with a $``-"$ perturbation; on the right the case with $``+"$ sign.}
 \label{Nlssolp2ex1+max}
\end{figure}
The modulus of the solution at the last recorded time $t=0.6405$ is shown in 
Fig.~\ref{Nlssolp2ex1+t06405} on the left. It can be seen that it is strongly compressed in the $x_2$-direction. 
The corresponding Fourier coefficients are shown on the right of the same figure. They also indicate the appearance of a singularity in the $x_{2}$-direction.  
\begin{figure}[htb!]
  \includegraphics[width=0.49\textwidth]{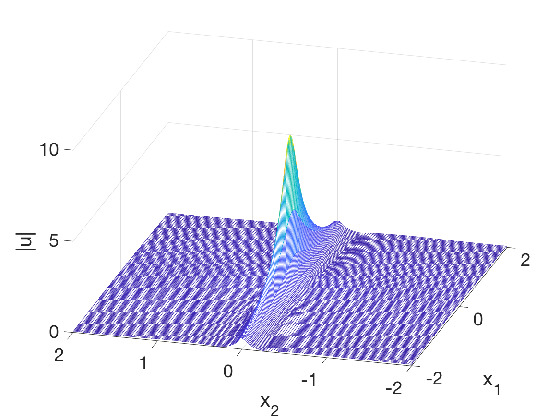}
  \includegraphics[width=0.49\textwidth]{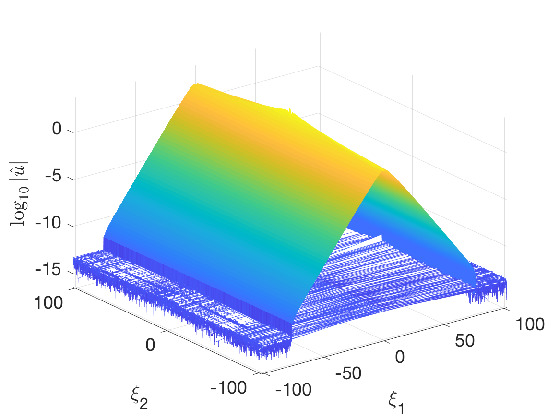} 
 \caption{Solution to \eqref{parnos} for 
 $\eps =1$, $\sigma=2$ and initial data (\ref{pert}) with the $``+"$ sign: On the left the modulus of the 
 solution at the last recorded time $t=0.6045$; on the right the corresponding Fourier coefficients of $u$ given by $\hat{u}$.}
 \label{Nlssolp2ex1+t06405}
\end{figure}

These numerical findings indicate that the global existence result stated in Theorem \ref{thm:subdel} is indeed sharp. 
It also shows that the two-dimensional model with partial off-axis variation essentially behaves like the classical one-dimensional focusing NLS in the 
unmodified $x_2$-direction (i.e., the direction in which $P_\eps$ does not act). Recall that for the classical 
one-dimensional (focusing) NLS, finite-time blow-up is known to appear as soon as $\sigma \ge 2$.

\subsection{The case with self-steepening parallel to the off-axis variation}
In this subsection, we include the effect of self-steepening and consider equation \eqref{ppDNLS} with $\eps =1$, $\delta_{2}=0$, and $\delta_{1}>0$. 

For $\sigma=1$, the stationary state $Qe^{it}$ appears to be stable against all studied perturbations. Indeed, the situation is found to be qualitatively similar to the case 
with full off-axis perturbations (except for a loss of radial symmetry) and we therefore omit a corresponding figure.

When $\sigma=2$, the stationary state no longer appears to be stable. However, we also do not have any indication of finite-time blow-up in this case. 
Indeed, given a $``-"$ perturbation in the initial data \eqref{pert}, it can be seen on the left of Fig.~\ref{ex1p2dx03} that the $L^{\infty}$-norm of the solution simply 
decreases monotonically in time. 
\begin{figure}[htb!]
  \includegraphics[width=0.32\textwidth]{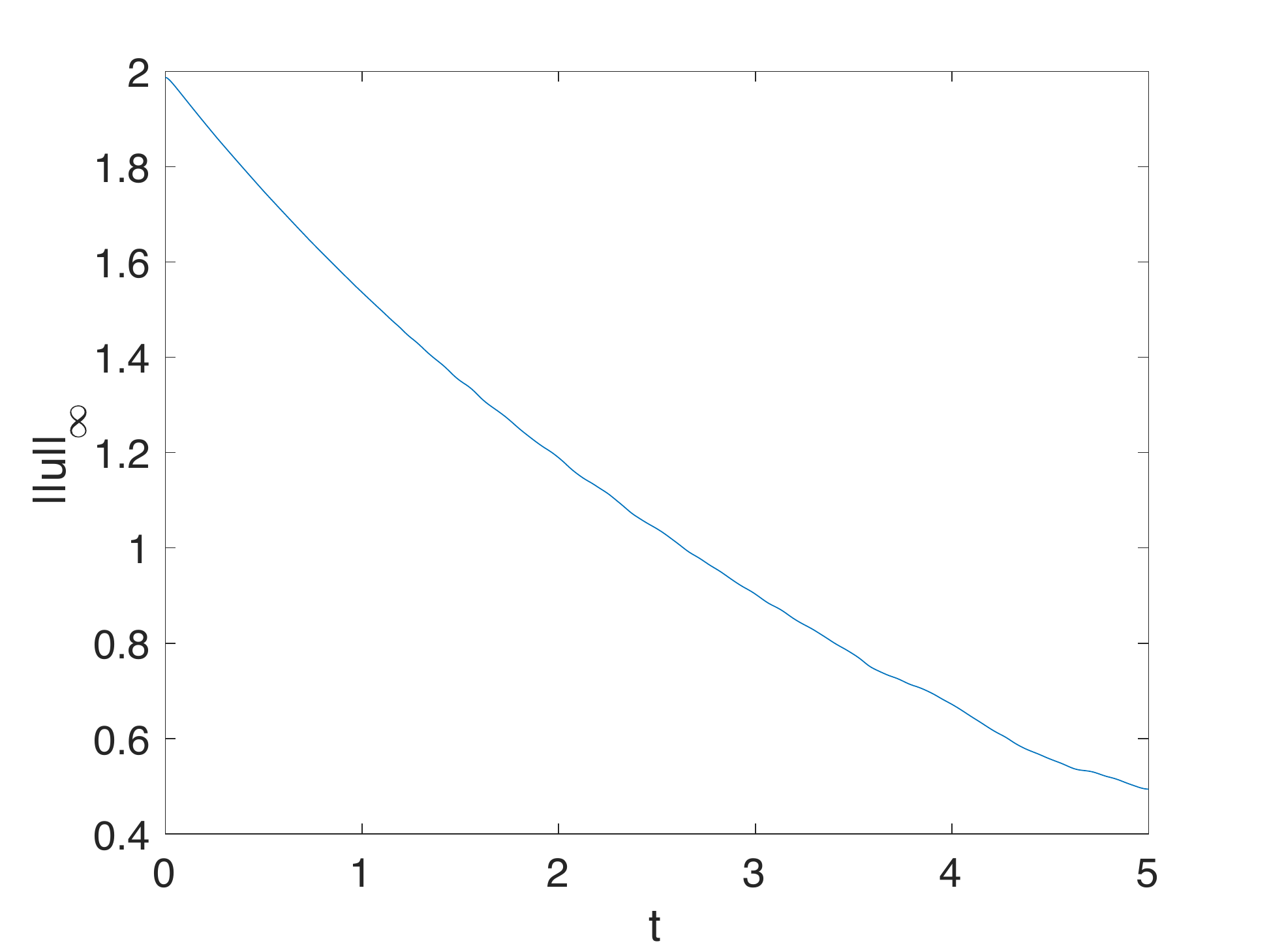}
  \includegraphics[width=0.32\textwidth]{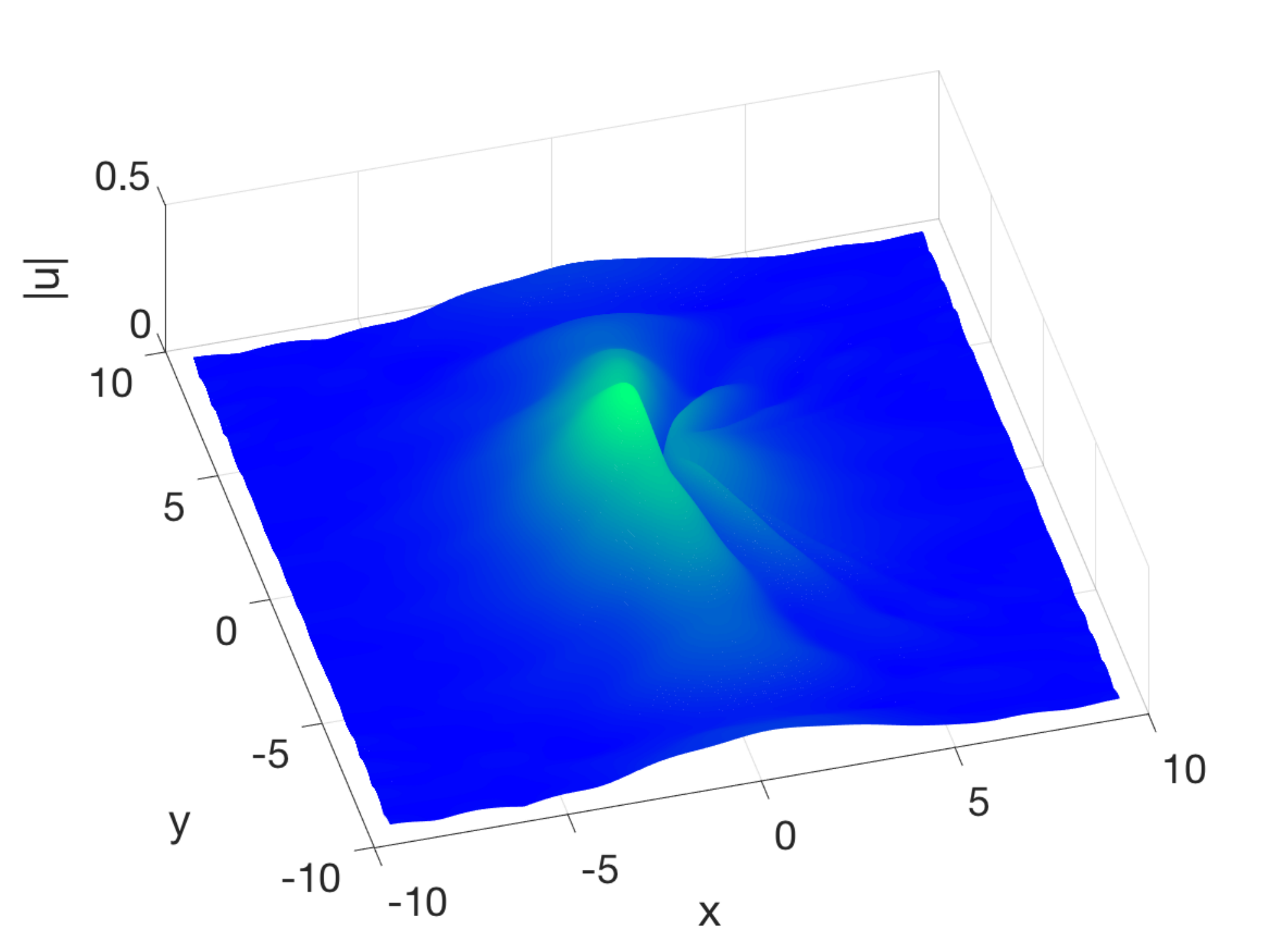}
  \includegraphics[width=0.32\textwidth]{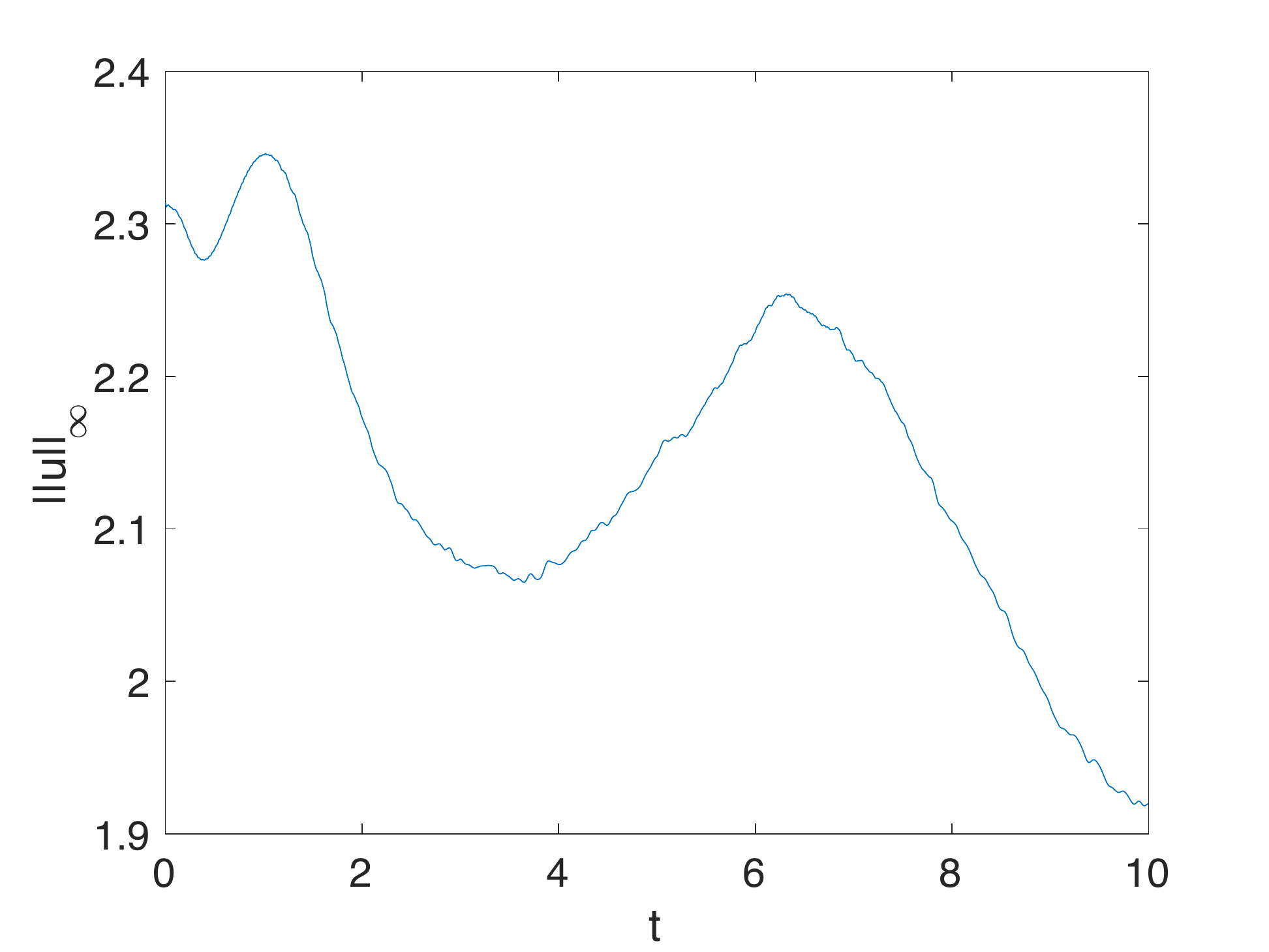}
 \caption{Solution to \eqref{ppDNLS} with 
 $\eps=1$, $\sigma=2$, and $\bd=(0.3, 0)^\top$: On the left, the $L^\infty$-norm of the solution obtained for initial data \eqref{pert} with the $``-"$ sign, 
 on the right for the $``+"$, and in the middle $|u|$ at $t=5$ for the $``-"$ sign perturbation.  }
 \label{ex1p2dx03}
\end{figure}
Notice, that there is still an effect of self-steepening visible in the modulus of the solution $|u|$, depicted in the middle of the same figure.  
The behavior of the $L^\infty$-norm in the case of a $``+"$ perturbation is shown on the right of Fig.~\ref{ex1p2dx03}. It is no longer monotonically decreasing but 
still converges to zero. 

For $\sigma=3$, a $``-"$ perturbation of
\eqref{pert} is found to be qualitatively similar to the case $\sigma =2$ and we therefore omit a figure illustrating this behavior. 
However, the situation radically changes if we consider a perturbation with the $``+"$ sign, see Fig.~\ref{ex1p3dx01}. The 
$L^{\infty}$-norm of the solution indicates a blow-up for $t\approx 0.1555$, where the code stops with an overflow error. 
\begin{figure}[htb!]
  \includegraphics[width=0.49\textwidth]{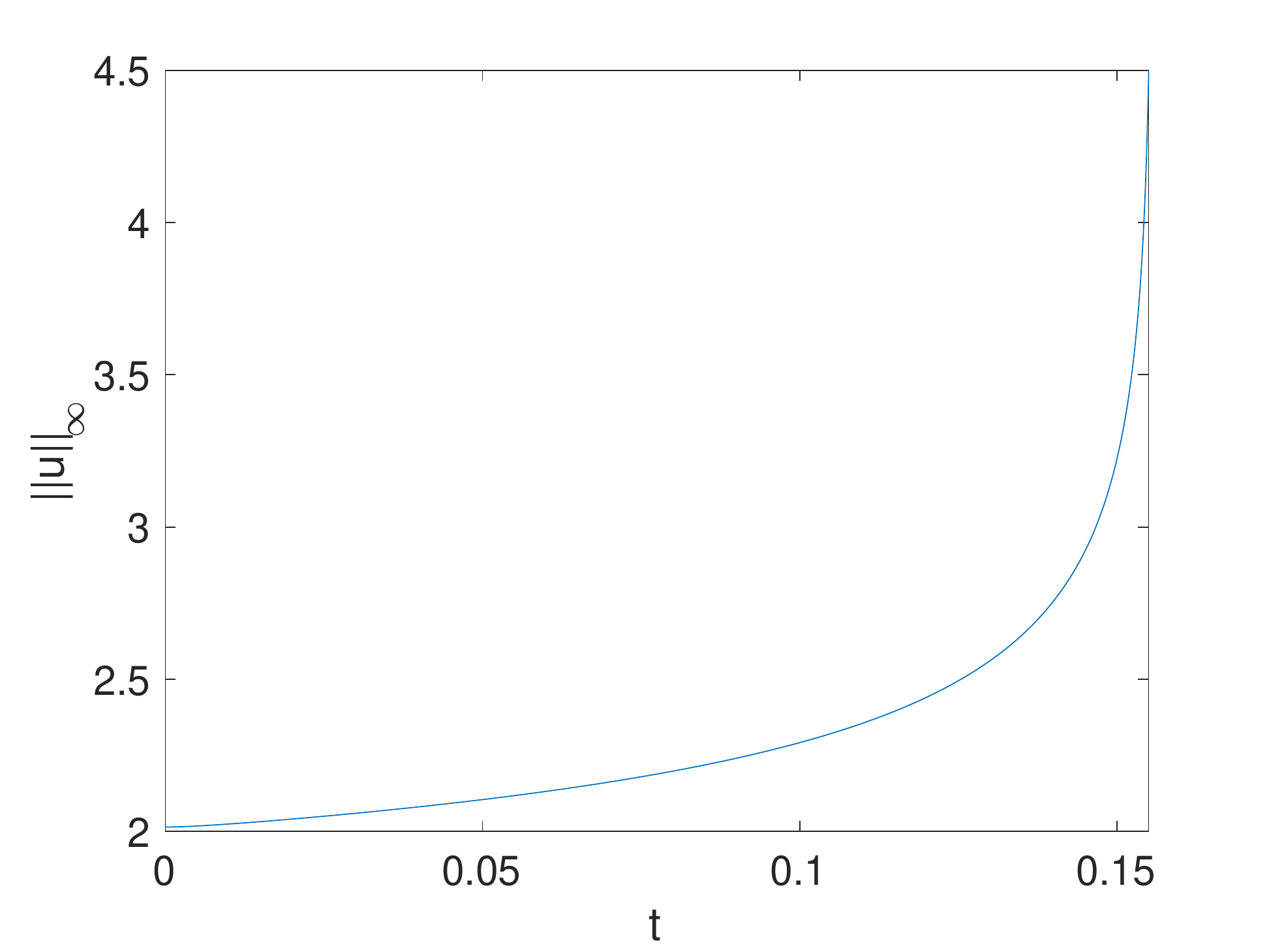}
  \includegraphics[width=0.49\textwidth]{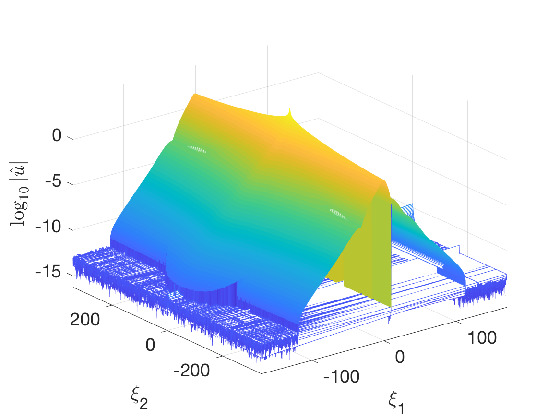}
 \caption{$L^{\infty}$-norm of the solution to \eqref{ppDNLS} with 
$\eps=1$, $\sigma=3$, $\bd_{1}=(0.1, 0)^\top$, and  
 initial data \eqref{pert} with the $``+"$ sign. On the right the 
 modulus of the Fourier coefficients of the solution at time $t=0.155$.  }
 \label{ex1p3dx01}
\end{figure}
In this particular simulation we have used $10^{4}$ time steps 
for $t\in[0,0.17]$ and $N_{x_{1}}=2^{10}$, $N_{x_{2}}=2^{11}$ Fourier modes (since the maximum of the solution hardly moved, it was not necessary to use a co-moving frame). 
The solution is still well resolved in time at $t=0.155$ since $M_\eps(t)$ remains numerically conserved up 
to the order of $10^{-11}$. But despite the higher resolution in $x_{2}$ used for this simulation, 
the Fourier coefficients indicate a loss of resolution in the $x_{2}$-direction. 
The modulus of the solution at the last recorded time is plotted in Fig.~\ref{ex1p3dx01t}. Note that $|u|$ is still regular in the $x_{1}$-direction in which $P_\eps^{-1}$ acts, 
but it has become strongly compressed in the $x_{2}$-direction. 
\begin{figure}[htb!]
  \includegraphics[width=0.49\textwidth]{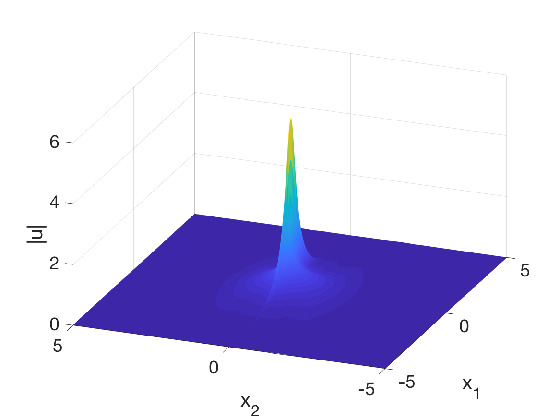}
 \caption{The modulus of the solution to equation \eqref{ppDNLS} with 
 $\eps=1$, $\sigma=3$, $\bd_{1}=(0.1, 0)$, and initial data \eqref{pert} with the $``+"$ sign, plotted at time $t=0.155$.}
 \label{ex1p3dx01t}
\end{figure}

\subsection{The case with self-steepening orthogonal to the off-axis variation}
Finally, we shall consider the same model equation \eqref{ppDNLS} with $\eps =1$, but this time we let $\delta_{1}=0$ for 
non-vanishing $\delta_{2}>0$. This is the only case, for which we do {\it not} have any analytical existence results at present.

For $\sigma=1$, it can be seen that a $``-"$ sign in the initial data \eqref{pert} yields a purely dispersive solution with monotonically decreasing $L^{\infty}$-norm, see Fig.~\ref{ex1d1} which also shows a picture of $|u|$ at $t=20$. The $``+"$ sign again leads to oscillations of the $L^{\infty}$-norm in time, indicating stability of the ground state. The situation for $\sigma =2$ is qualitatively very similar and hence we omit the corresponding figure.
\begin{figure}[htb!]
  \includegraphics[width=0.32\textwidth]{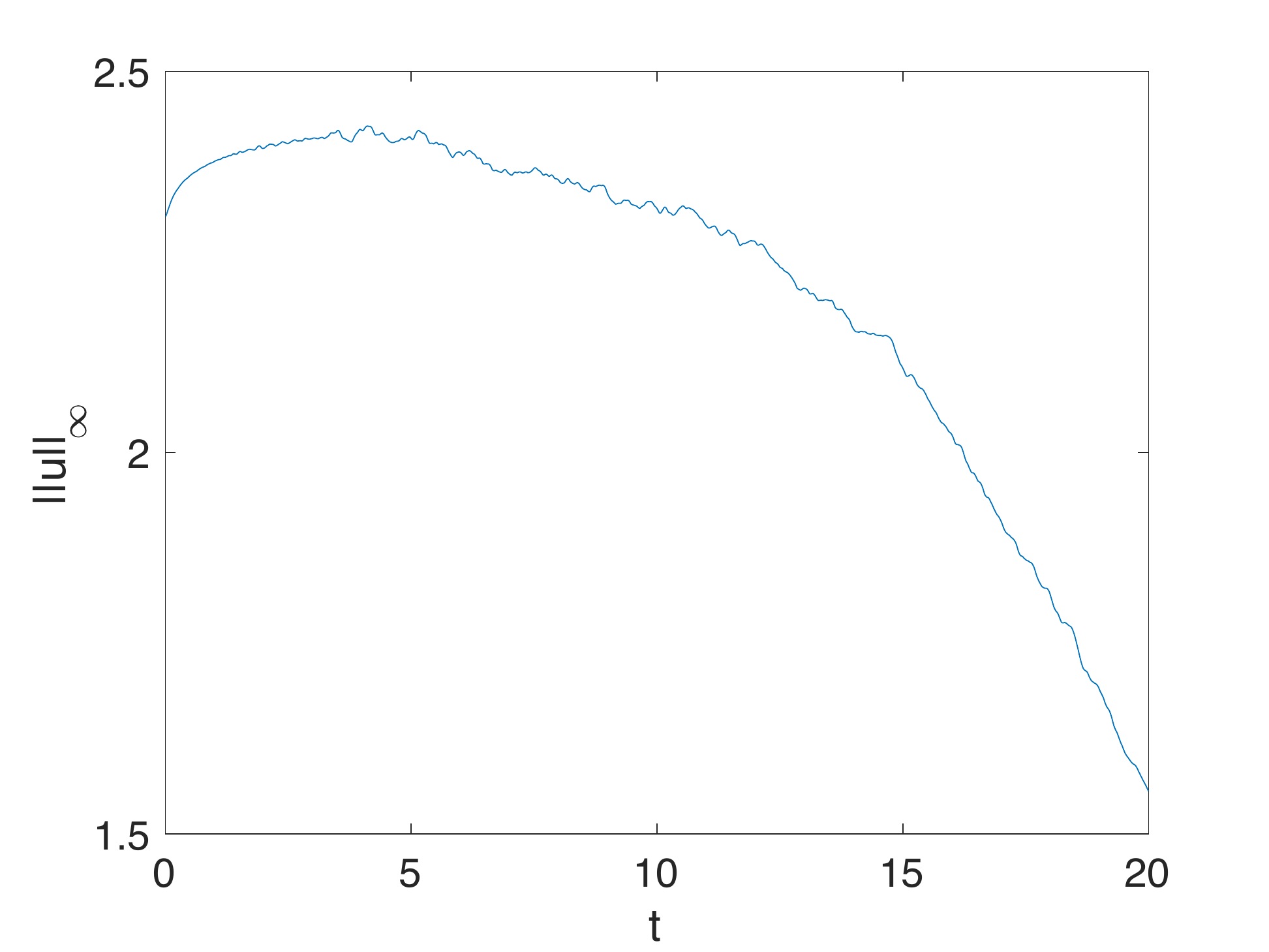}
  \includegraphics[width=0.32\textwidth]{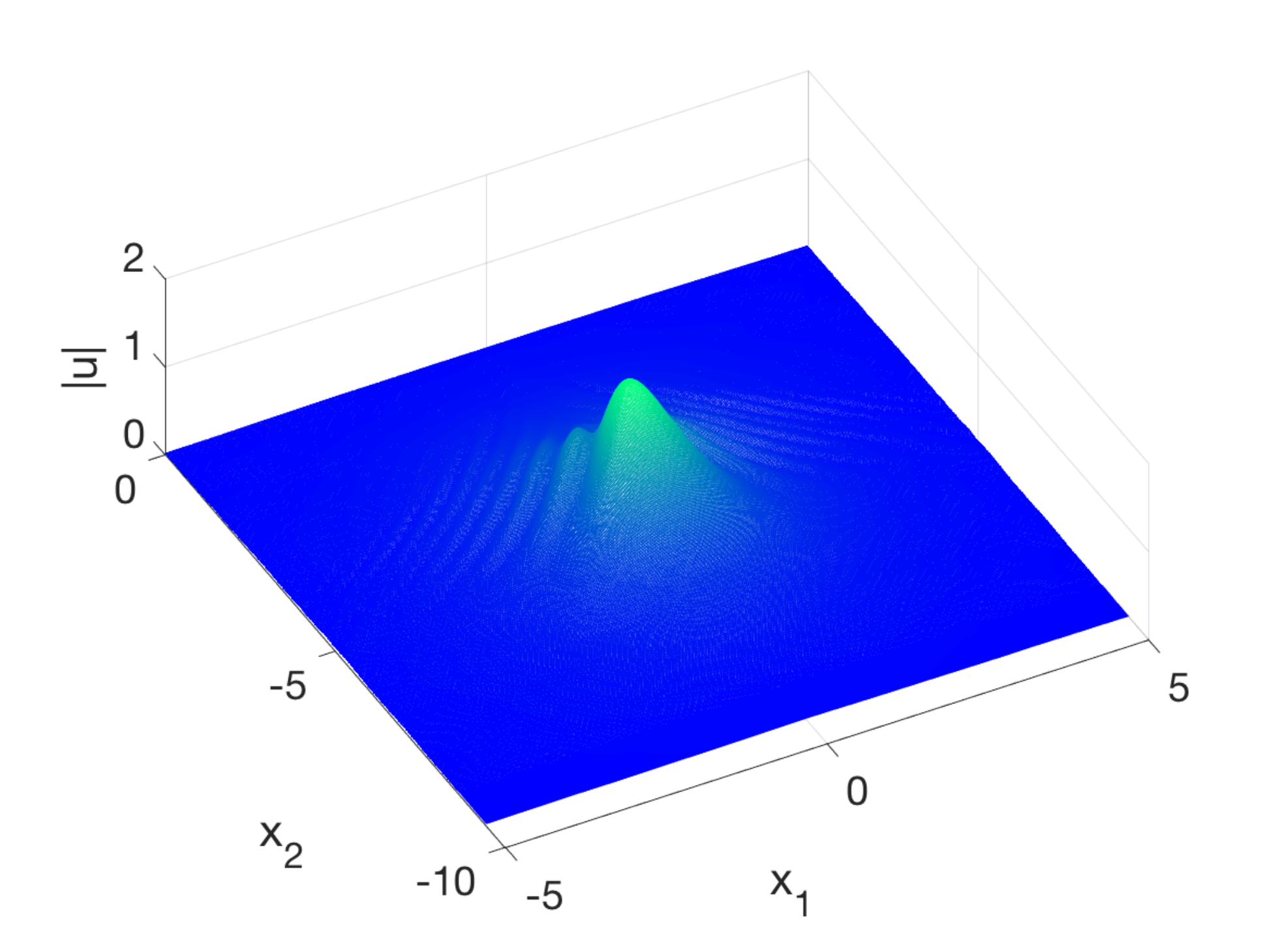}
  \includegraphics[width=0.32\textwidth]{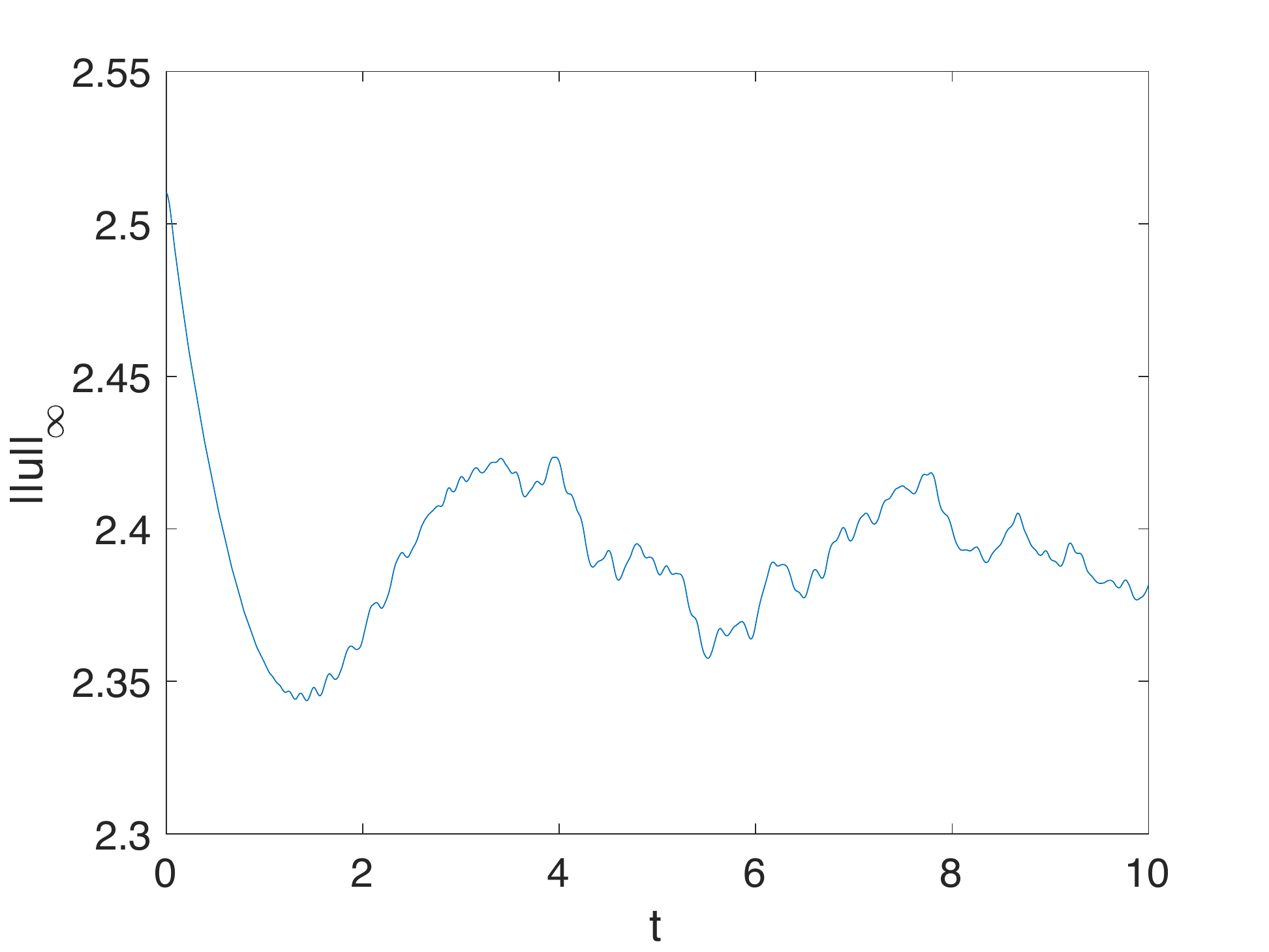}
 \caption{Solution to \eqref{ppDNLS} with $\eps=1$, $\sigma=1$, and $\bd_{1}=(0,1)^\top$. On the left the $L^\infty$-norm of the solution for initial data \eqref{pert} with 
 the $``-"$ sign, on the right for the one with $``+"$ sign, and in the middle $|u|$ at time $t=20$ for the $``-"$ sign.}
 \label{ex1d1}
\end{figure}

For $\sigma=3$ and a $``-"$ sign in the initial data \eqref{pert}, we again find a purely dispersive solution. 
However, the behavior of the solution obtained from a perturbation of $Q$ with the $``+"$ sign is less clear. 
As one can see in Fig.~\ref{ex1p3d01+}, the solution is initially focused up to a certain point after which 
its $L^{\infty}$-norm decreases again.
\begin{figure}[htb!]
  \includegraphics[width=0.49\textwidth]{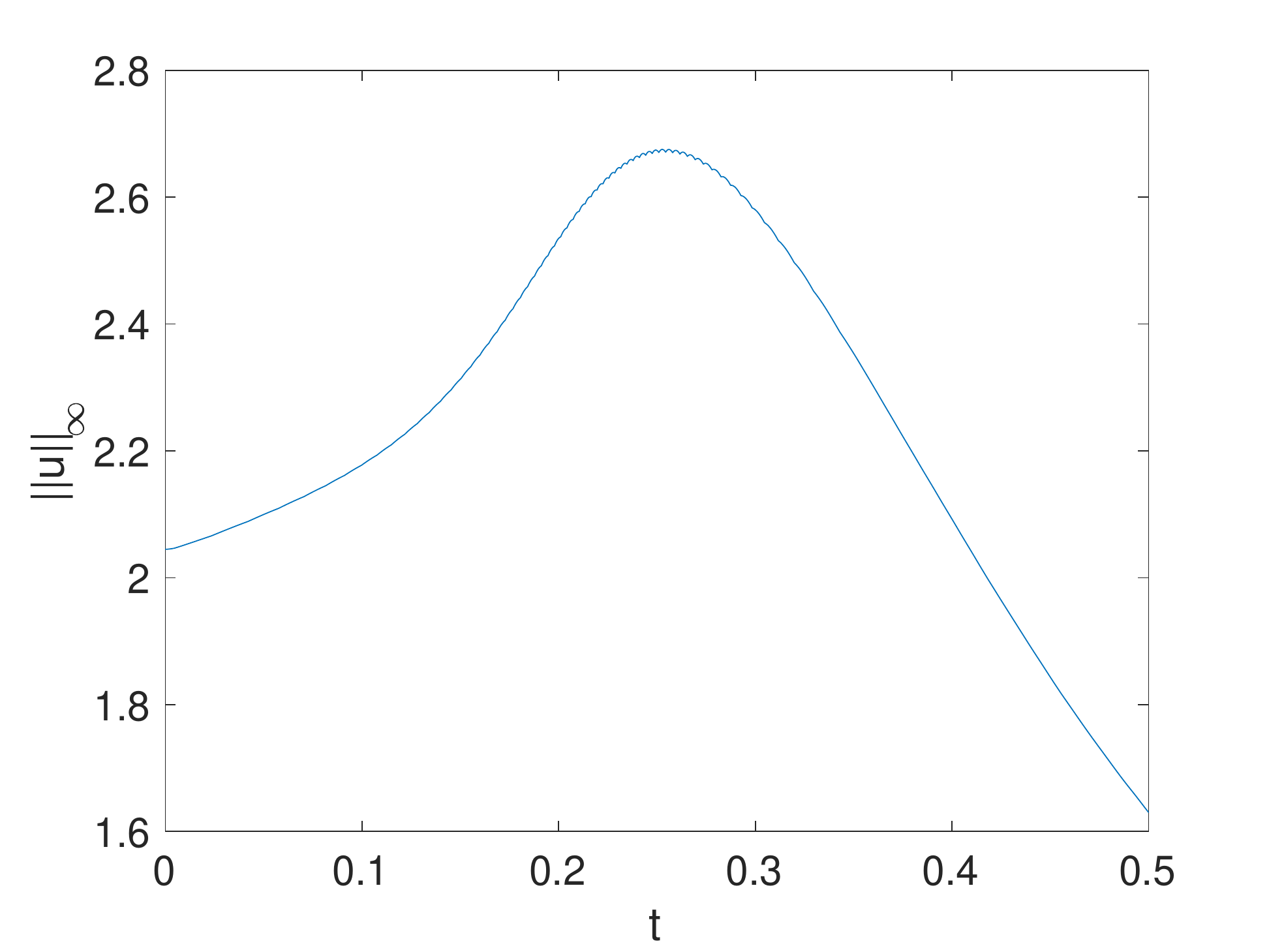}
  \includegraphics[width=0.49\textwidth]{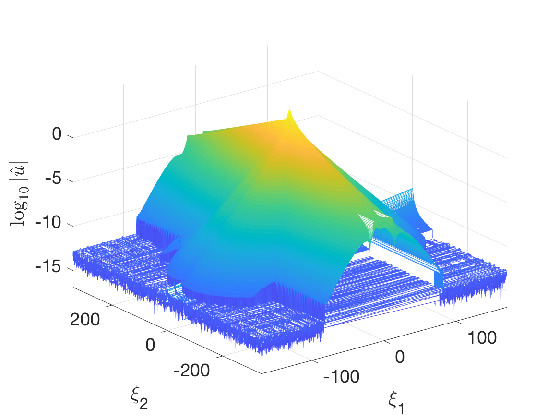}
 \caption{Solution to \eqref{ppDNLS} with $\eps=1$, $\sigma=3$, $\bd_{1}=(0,0.1)^\top$, and 
 initial data \eqref{pert} with the $``+"$ sign: On the left the $L^{\infty}$-norm of $u$ as a function of time, on the right the Fourier coefficients $\widehat u$ at $t=0.25$.}
 \label{ex1p3d01+}
\end{figure}

This simulation is done with 
$N_{x_{1}}=2^{10}$, $N_{x_{2}}=2^{11}$ Fourier modes and 
$N_{t}=10^{4}$ time steps for $t\in[0,0.5]$. The relative 
conservation of the numerically computed quantity $M_\eps(t)$ is 
better than $10^{-10}$ during the whole computation indicating an 
excellent resolution in time. The spatial resolution is indicated by the Fourier coefficients of the solution near the maximum of the $L^{\infty}$-norm 
as shown on the right of Fig.~\ref{ex1p3d01+}. Obviously, a much higher resolution is needed in the $x_{2}$-direction, but even near the maximum of the $L^{\infty}$-norm 
the modulus of the Fourier coefficients decreases to the order of $10^{-5}$. The modulus of the solution at time $t=0.5$ can be seen in 
Fig.~\ref{ex1p3d01+u}. It  shows a strong compression in the $x_{2}$-direction but nevertheless remains regular for all times. 
This is in stark contrast to the analogous situation with parallel self-steepening and off-axis variations, cf. Figures~\ref{ex1p3dx01} and \ref{ex1p3dx01t} above.
\begin{figure}[htb!]
  \includegraphics[width=0.49\textwidth]{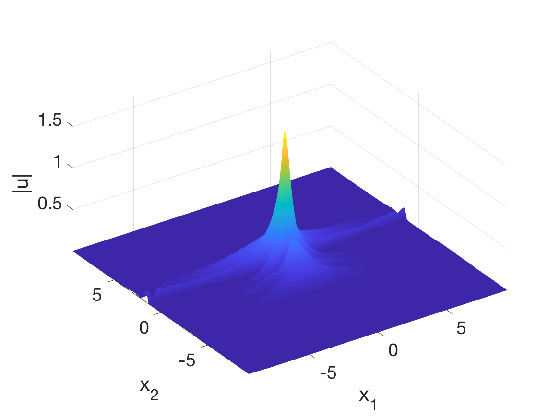}
 \caption{The modulus of the solution to \eqref{ppDNLS} with $\eps=1$, $\sigma=3$, $\bd_{1}=(0,0.1)^\top$, and initial data \eqref{pert} with the $``+"$ sign, plotted 
 at $t=0.5$.}
 \label{ex1p3d01+u}
\end{figure}


\bibliographystyle{amsplain}

\end{document}